\theoremstyle{plain}
\newtheorem{theorem}{\bf Theorem}[section]
\newtheorem{conjecture}[theorem]{Conjecture}
\newtheorem{fconjecture}[theorem]{(Disproved) Conjecture}
\newtheorem{corollary}[theorem]{Corollary}
\newtheorem{lemma}[theorem]{Lemma}
\newtheorem{proposition}[theorem]{Proposition}
\newtheorem{mthm}{\bf Main Theorem}
\newtheorem{problem}[theorem]{Problem}
\theoremstyle{definition}
\newtheorem{remark}[theorem]{Remark}
\newtheorem{example}[theorem]{Example}
\newcommand{\codim}{\operatorname{codim} }
\newcommand{\N}{\mathbb{N}}
\newcommand{\Z}{\mathbb{Z}}
\newcommand{\PP}{\mathbb{P}}
\newcommand{\CC}{\mathbb{C}}
\newcommand{\KK}{\mathbb{K}}
\newcommand{\diam}{\operatorname{diam} }
\newcommand{\Min}{\operatorname{Min} }
\newcommand{\height}{\operatorname{height} }
\newcommand{\Ker}{\operatorname{Ker} }
\newcommand{\Proj}{\operatorname{Proj} }
\newcommand{\pp}{\mathfrak{p}}
\newcommand{\qq}{\mathfrak{q}}
\newcommand{\mm}{\mathfrak{m}}
\renewcommand{\O}{\mathcal{O}}
\newcommand{\Hom}{\operatorname{Hom} }
\newcommand{\reg}{\operatorname{reg} }
\begin{document}

\title{On the dual graphs of Cohen--Macaulay algebras}

\author{
Bruno Benedetti \thanks{Supported by the DFG Collaborative Research Center TRR109, ``Discretization in Geometry and Dynamics''.}\\
\small Dept.\ Mathematik u.\ Informatik, FU Berlin\\
\small \url{bruno@zedat.fu-berlin.de}
\and 
Matteo Varbaro \thanks{Supported by PRIN  2010S47ARA\_003 ``Geometria delle Variet\`a Algebriche".} \\
\small Dip.\ Matematica, U. Genova \\
\small \url{varbaro@dima.unige.it}
}

\date{\today}
\maketitle 
\begin{abstract}
Given an equidimensional algebraic set $X\subset \PP^n$, its dual graph $G(X)$ is the graph whose vertices are the irreducible components of $X$ and whose edges connect components that intersect in codimension one. 
Hartshorne's connectedness theorem says that if (the coordinate ring of) $X$ is Cohen-Macaulay, then $G(X)$ is connected. We present two quantitative variants of Hartshorne's result:

\begin{compactenum}[(1)]
\item If $X$ is a Gorenstein subspace arrangement, then $G(X)$ is $r$-connected, where $r$ is the Castelnuovo--Mumford regularity of $X$. \\
(The bound is best possible. For coordinate arrangements, it yields an algebraic extension of Balinski's theorem for simplicial polytopes.)

\item If $X$ is an arrangement of lines no three of which meet in the same point, and $X$ is canonically embedded in $\PP^n$, then  
the diameter of the graph $G(X)$ is less than or equal to $\codim_{\PP^n}X$. \\
(The bound is sharp; for coordinate arrangements, it yields an algebraic expansion on the recent combinatorial result that the Hirsch conjecture holds for flag normal simplicial complexes.) 
\end{compactenum}
On the way to these results, we show that there exists a graph which is not the dual graph of any simplicial complex (no matter the dimension). 
\end{abstract}

\section{Introduction}
Let $I$ be an ideal in the polynomial ring $S=\KK[x_1, \ldots, x_n]$, where $\KK$ is some field. For simplicity, we assume throughout this paper that
$I$ is \emph{height-unmixed}, that is, all minimal primes of $I$ have the same height. The \textbf{dual graph} $G(I)$ is then naturally defined as follows: First we draw vertices $v_1, \ldots, v_s$, corresponding to the minimal prime ideals $\{\pp_1, \ldots, \pp_s\}$ of $I$. Then we connect two vertices $v_i$ and $v_j$ with an edge if and only if 
\[\height I = \;\height (\pp_i + \pp_j) - 1.\]

The dual graph need not be connected, as shown for example by the ideal $I =(x,y) \cap (z,w)$ inside $\CC[x,y,z,w]$, whose dual graph consists of two disjoint vertices. The reader familiar with combinatorics should note that this ideal is monomial and squarefree, so via the Stanley--Reisner correspondence it can be viewed as a simplicial complex. There is already an established notion of ``dual graph of a (pure) simplicial complex'' and it is compatible with our definition, in the sense that if $I_{\Delta}$ is the Stanley--Reisner ideal of a pure complex $\Delta$, the dual graphs of $\Delta$ and of $I_\Delta$ are the same. This way it is usually easy to produce examples of ideals with prescribed dual graphs. However, not all graphs are dual graphs of a simplicial complex, as we will see in Corollary \ref{cor:notallgraphsaredual}.


Having connected dual graph is a property well studied in the literature under the name of ``\emph{connectedness in codimension one}''. Remarkably, it is shared by all Cohen--Macaulay algebras:

\begin{theorem}[Hartshorne \cite{Ha}]  For any ideal $I \subset S$, if $S/I$ is Cohen--Mac\-aulay then $G(I)$ is connected. 
\end{theorem}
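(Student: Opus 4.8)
The plan is to argue by contradiction, reducing via localization to a two‑dimensional situation and then exploiting the vanishing of low‑degree local cohomology that Cohen--Macaulayness guarantees. Assume $G(I)$ is disconnected. Since $S/I$ is Cohen--Macaulay it is unmixed with no embedded primes, so I would fix an irredundant primary decomposition $I=\bigcap_{k=1}^{s}Q_k$ with $\sqrt{Q_k}=\pp_k$ and $\height\pp_k=\height I=:c$ for every $k$. A disconnection of $G(I)$ amounts to a partition $\{1,\dots,s\}=A\sqcup B$ into nonempty parts with $\height(\pp_i+\pp_j)\neq c+1$ for all $i\in A$, $j\in B$; since $\pp_i\subsetneq\pp_i+\pp_j$, this forces $\height(\pp_i+\pp_j)\ge c+2$. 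Put $I_A=\bigcap_{i\in A}Q_i$ and $I_B=\bigcap_{j\in B}Q_j$, so that $I=I_A\cap I_B$. From $V(I_A+I_B)=\bigcup_{i\in A,\,j\in B}V(\pp_i+\pp_j)$ one gets $\height(I_A+I_B)=\min_{i,j}\height(\pp_i+\pp_j)\ge c+2$, and $I_A+I_B\neq S$, because $S/I$ is connected (a standard graded $\KK$‑algebra, hence with no nontrivial idempotent) whereas $S/I\cong S/I_A\times S/I_B$ would produce one.

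Next I would choose a prime $P$ minimal over $I_A+I_B$ and pass to the local ring $R=(S/I)_P$. It is Cohen--Macaulay, and $\dim R\ge\height_S P-c\ge 2$ since $S$ is a catenary domain; in particular $\operatorname{depth}R\ge 2$, so $H^0_\mm(R)=H^1_\mm(R)=0$. Let $\mathfrak a,\mathfrak b$ denote the images of $I_A,I_B$ in $R$. Then $\mathfrak a\cap\mathfrak b=0$, while $\mathfrak a+\mathfrak b$ is a proper $\mm$‑primary ideal — this is the whole point of taking $P$ minimal over $I_A+I_B$. Moreover the associated primes of $R/\mathfrak a$ and of $R/\mathfrak b$ are minimal primes of $R$, hence different from $\mm$ (as $\dim R\ge 2$), so $H^0_\mm(R/\mathfrak a)=H^0_\mm(R/\mathfrak b)=0$. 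Feeding the short exact sequence $0\to R\to R/\mathfrak a\oplus R/\mathfrak b\to R/(\mathfrak a+\mathfrak b)\to 0$ (here $\mathfrak a\cap\mathfrak b=0$ identifies the left term with $R$) into the local cohomology long exact sequence yields
\[
0=H^0_\mm(R/\mathfrak a)\oplus H^0_\mm(R/\mathfrak b)\longrightarrow H^0_\mm\bigl(R/(\mathfrak a+\mathfrak b)\bigr)\longrightarrow H^1_\mm(R)=0 .
\]
Thus $H^0_\mm(R/(\mathfrak a+\mathfrak b))=0$; but $R/(\mathfrak a+\mathfrak b)$ is a nonzero module of finite length, so it equals its own zeroth local cohomology — a contradiction. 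Hence $G(I)$ is connected.

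Most of this is formal: the vanishing $H^i_\mm(M)=0$ for $i<\operatorname{depth}M$, exactness of the Mayer--Vietoris sequence, and the finite‑length identification are all standard. The step that genuinely demands care — and the one I expect to be the main obstacle to write up cleanly — is the localization: one must check that the inequality $\dim R\ge 2$ survives (this rests on $S$ being catenary together with the unmixedness of $I$), and, above all, that $\mathfrak a+\mathfrak b$ really does become $\mm$‑primary in $R$, which is exactly what keeps $H^0_\mm(R/(\mathfrak a+\mathfrak b))$ nonzero and drives the contradiction. I also used implicitly that $S/I$ has connected spectrum; this is automatic in the graded (projective) setting that concerns us, and is what excludes the degenerate possibility $I_A+I_B=S$.
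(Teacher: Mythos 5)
Your argument is correct. Note first that the paper does not prove this statement at all: it is quoted as Hartshorne's connectedness theorem with a citation to [Ha], so there is no in-paper proof to compare against. Your write-up is the standard modern proof (the one found, e.g., in Eisenbud's \emph{Commutative Algebra} or in Bruns--Herzog): split a hypothetical disconnection as $I=I_A\cap I_B$ with $\height(I_A+I_B)\ge \height I+2$, localize at a prime $P$ minimal over $I_A+I_B$ so that $\mathfrak a+\mathfrak b$ becomes $\mm$-primary while $\dim R\ge 2$, and run the Mayer--Vietoris sequence $0\to R\to R/\mathfrak a\oplus R/\mathfrak b\to R/(\mathfrak a+\mathfrak b)\to 0$ through local cohomology to contradict $\operatorname{depth}R\ge 2$. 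All the delicate points you flag are handled correctly: the inequality $\height(\pp_i+\pp_j)\ge c+2$ uses that distinct minimal primes of equal height are incomparable; $\dim R\ge 2$ uses that $S/I$ is catenary and equidimensional; and $H^0_\mm(R/\mathfrak a)=0$ holds because the associated primes of $R/\mathfrak a$ are minimal primes of $R$, since $I$ (hence $I_A$) has no embedded primes. Two small observations: in the graded setting $I_A+I_B\subseteq\mm\ne S$ is automatic, so the idempotent argument is not needed; and your proof in fact only uses that $S/I$ satisfies Serre's condition $(S_2)$ (depth $\ge 2$ at every prime of height $\ge 2$ over $I$), which is the natural level of generality of Hartshorne's theorem and strictly weaker than Cohen--Macaulayness.
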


(It is well known that, if $S/I$ is Cohen-Macaulay, then $I$ is height-unmixed). 
But can we say more about how connected $G(I)$ is, if we know more about $I$ --- for example, that $I$ is generated in certain degrees, or that $S/I$ is Gorenstein? This leads to the following question.

\begin{problem}
Give a quantitative version of Hartshorne's connectedness theorem.
\end{problem}

There are at least two natural directions to explore: (a) lower bounds for the connectivity, and (b) upper bounds for the diameter. 

\medskip
 
\textbf{Connectivity} counts \emph{how many vertex-disjoint} paths there are (at least) between two arbitrary points of the graph. Balinski's theorem says that the graph of every $d$-polytope is $d$-connected. Since the dual graph of any $d$-polytope $P$ is also the $1$-skeleton of a $d$-polytope (namely, of the polar polytope $P^*$), an equivalent reformulation of Balinski's theorem is ``the dual graph of every $d$-polytope $P$ is $d$-connected''.
 This was later extended by many authors, cf.\ e.g.\ \cite{Barnette} \cite{Athanasiadis} \cite{Wotzlaw} \cite{BjVo}. Here is one extension due to Klee:

\begin{theorem}[Klee~\cite{Klee}]\label{ref:klee}
Let $I$ be the Stanley--Reisner ideal of a $d$-dimensional triangulated homology manifold (or more generally, of any $d$-dimensional normal pseudomanifold without boundary). The dual graph of $I$ is $(d+1)$-connected. 
\end{theorem}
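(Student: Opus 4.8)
The plan is to prove this as a purely combinatorial statement about the dual graph $G(\Delta)$ (which, as recalled above, coincides with $G(I_\Delta)$), by induction on $d$. The base case $d=1$ is immediate: a $1$-dimensional normal pseudomanifold without boundary is a cycle $C_m$ with $m\ge 3$, whose dual graph is again $C_m$, and $C_m$ is $2$-connected. So let $d\ge 2$ and assume the theorem in dimension $d-1$.

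The local ingredient I would use is that for every vertex $v$ of $\Delta$ the link $\mathrm{lk}_\Delta(v)$ is a normal $(d-1)$-pseudomanifold without boundary, and that $F\mapsto F\setminus\{v\}$ is an adjacency-preserving bijection from the facets of $\Delta$ through $v$ onto the facets of $\mathrm{lk}_\Delta(v)$. Thus the subgraph $G_v$ of $G(\Delta)$ induced on the facets containing $v$ is isomorphic to $G(\mathrm{lk}_\Delta(v))$, which by the inductive hypothesis is $d$-connected; in particular $G_v$ has at least $d+1$ vertices and remains connected after deleting any $d-1$ of them. A short companion induction, again via links, shows that a normal $k$-pseudomanifold has at least $k+2$ facets; applied to $\Delta$ this gives that $G(\Delta)$ has at least $d+2$ vertices, and applied to links of vertices and edges it is used below.

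By Menger's theorem it then suffices to show that for any two facets $F,F'$ and any set $W$ of at most $d$ further facets there is an $F$--$F'$ path in $G(\Delta)$ avoiding $W$. Call a vertex \emph{good} if it lies in at most $d-1$ of the facets of $W$. Averaging over the $d+1$ vertices of $F$ and using that $|F\cap F_i|\le d$ for every $F_i\in W$ (since $F\notin W$), one obtains a good vertex $v\in F$, and likewise a good $v'\in F'$. For a good vertex $u$ the graph $G_u-W$ is $G_u$ with at most $d-1$ vertices removed, hence connected and nonempty, so it is enough to connect $G_v-W$ to $G_{v'}-W$ inside $G(\Delta)-W$. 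Here the essential geometric fact is that the \emph{bad} vertices --- those lying in all $d$ facets of $W$ --- form the vertex set of a single face $\sigma=\bigcap W$ of $\Delta$, of codimension at least $2$ unless $\sigma$ is a ridge shared by exactly two removed facets. Assuming (see below) that deleting the bad vertices from the $1$-skeleton of $\Delta$ leaves it connected, choose a $1$-skeleton path $v=u_0,u_1,\dots,u_k=v'$ missing all bad vertices, so that every $u_t$ is good. For consecutive $u_t,u_{t+1}$ the edge $\{u_t,u_{t+1}\}$ lies in at least two facets (its link is a normal pseudomanifold, or the edge is itself a ridge when $d=2$), and at most $d-1$ of these lie in $W$ because they all contain the good vertex $u_t$; hence some facet through $\{u_t,u_{t+1}\}$ is not in $W$ and lies in both $G_{u_t}-W$ and $G_{u_{t+1}}-W$. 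Chaining these overlaps places $F\in G_v-W$ and $F'\in G_{v'}-W$ in one component of $G(\Delta)-W$, which is what we want.

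The step I expect to be the main obstacle is precisely the claim that removing $V(\sigma)$ from the $1$-skeleton keeps it connected --- this is where normality is really used, and the argument bifurcates by $\codim\sigma$. When $\codim\sigma\ge 2$ I would induct on $|V(\sigma)|$: peel off a vertex $w\in\sigma$, connect the former neighbors of $w$ through $\mathrm{lk}_\Delta(w)$ (a lower-dimensional normal pseudomanifold disjoint from $w$, to which the inductive hypotheses apply), and then reach every other vertex by shortest paths. The remaining case $d=2$ with $\sigma=\{a,b\}$ a ridge must be treated directly, using that $\mathrm{lk}_\Delta(a)$ and $\mathrm{lk}_\Delta(b)$ are cycles to show that the former neighbors of $a$ (other than $b$) and of $b$ (other than $a$) all lie in one component of the punctured $1$-skeleton, again followed by a shortest-path sweep. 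A secondary point requiring attention, flagged above, is that each edge of the routing path must keep a facet outside $W$; this is exactly where the ``link has at least $k+2$ facets'' bound combines with the goodness of the $u_t$.
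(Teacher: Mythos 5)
Your proposal cannot be compared to a proof in the paper, because the paper contains none: Theorem \ref{ref:klee} is quoted from Klee's 1975 article, and the only case the authors actually re-derive is that of homology spheres (Corollary \ref{cor:klee}), by a completely different, algebraic route --- Hochster's formula makes the Stanley--Reisner ring of a homology $d$-sphere Gorenstein of regularity $d+1$, and Theorem \ref{thm:GorensteinRConnected} (Gorenstein liaison plus the Derksen--Sidman regularity bound for subspace arrangements) then yields $(d+1)$-connectivity. That machinery cannot reach general normal pseudomanifolds, which need not be Cohen--Macaulay, let alone Gorenstein; your link-induction is essentially the classical Klee/Barnette argument and is the right tool for the full statement. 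The trade-off is clear: your proof is elementary and covers the stated topological generality, while the paper's approach trades the topological hypothesis for the purely algebraic one of Gorensteinness and so applies to objects with no simplicial structure at all.

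As for correctness: the counting steps all check out (a good vertex of $F$ exists because $\sum_{F_i\in W}|F\cap F_i|\le d^2<d(d+1)$; an edge through a good vertex lies on at least $d$ facets since its link is a normal $(d-2)$-pseudomanifold with at least $d$ facets, resp.\ on exactly $2>d-1$ facets when $d=2$), and the step you flag as the main obstacle is true and provable exactly as you sketch. Precisely: for a normal $d$-pseudomanifold $\Delta$ without boundary and any non-facet face $\tau$, the $1$-skeleton minus $V(\tau)$ is connected. Induct on $d$ and, inside that, on $|V(\tau)|$. The inner base $\tau=\emptyset$ is strong connectedness. For the inner step pick $w\in\tau$ and set $\tau'=\tau\setminus\{w\}$; by the inner hypothesis $H=$ ($1$-skeleton minus $V(\tau')$) is connected; the neighbours of $w$ in $H$ span the graph of $\mathrm{lk}_\Delta(w)$ minus $V(\tau')$, which is connected by the outer hypothesis applied to the normal $(d-1)$-pseudomanifold $\mathrm{lk}_\Delta(w)$ and its non-facet face $\tau'$; and every other vertex of $H-w$ reaches a neighbour of $w$ by truncating an $H$-path to $w$. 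This handles your $d=2$, $\sigma$-a-ridge case uniformly, so no separate treatment is needed there. With that lemma inserted, your argument is complete.
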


Stanley--Reisner rings of homology spheres are particular examples of Gorenstein rings, so one can ask whether $S/I$ Gorenstein implies that $G(I)$ is highly connected. The answer is negative: As we show in Example \ref{ex:GorNot2conn},  
there are complete intersection ideals $I$ such that $G(I)$ is not even $2$-connected, because it has a leaf. 

Nevertheless, it is indeed possible to ``compromise'' between Hartshorne's theorem and Balinski and Klee's results. 
Recall that a radical ideal is said to \emph{define a subspace arrangement} if it is a finite intersection of (prime) ideals generated by linear forms.  

\begin{mthm}[Theorem \ref{thm:GorensteinRConnected}] \label{mthm:1}
Let $I\subset S$ be an ideal defining a subspace arrangement. If $S/I$ is Gorenstein and has Castelnuovo-Mumford regularity $r$, then $G(I)$ is $r$-connected.
\end{mthm}

The Stanley-Reisner ring of a simplicial (homology) $d$-sphere has Castelnuovo--Mum{\-}ford regularity $d+1$. So Main Theorem \ref{mthm:1} does imply that the dual graph of every (homology) $d$-sphere is $(d+1)$-connected. However, Main Theorem \ref{mthm:1} is much more general. In fact, the arrangements corresponding to squarefree monomial ideals are called \emph{coordinate}. Let $\mathfrak{L}$ be the class of all subspace arrangements obtainable from coordinate ones via linear changes of variables or via hyperplane sections; let $\mathfrak{P}$ be the class of subspace arrangements whose defining ideal is generated by a product of variables. It is well known that
\[
\{ \textrm{coordinate subspace arrangements}
\}
\; \subset \;
\mathfrak{L}
\; \subset \;
\mathfrak{P}
\; \subset \;
\{\textrm{all subspace arrangements} \},
\]
and most subspace arrangements are not in $\mathfrak{P}$, as explained in \cite{BPS}. 

Our proof of Main Theorem \ref{mthm:1} uses liaison theory, cf.\ \cite{Mi}, and a homological result by Derksen--Sidman \cite{DS}. The bound is best possible, in the sense that:
\begin{compactenum}[(1)]
\item The conclusion ``$r$-connected'' cannot be replaced by ``$(r+1)$-connected'' in general, cf. Example \ref{ex:RegBestPossible}.
\item The assumption ``$S/I$ Gorenstein'' cannot be weakened, for example, to ``$S/I$ Cohen--Macaulay'': See Remark \ref{rem:RegBestPossible}.
\item Without assuming that $I$ defines a subspace arrangement, the best one can prove is that $G(I)$ is 2-connected, provided the quotient of $S$ by any primary component of $I$ is Cohen--Macaulay (Corollary \ref{cor:2-conngor}). Without the latter assumption, one can infer nothing more than the connectedness of $G(I)$, even if $I$ is a complete intersection. Compare Example \ref{ex:GorNot2conn}.
\item Non-radical complete intersections whose radical defines a subspace arrangement, might have a path as dual graph, even if the regularity of $S/I$ is very high: See Example \ref{ex:unbounded}.
\end{compactenum}
\vskip5mm

The other direction in which Hartshorne's theorem could be extended, is by estimating the \textbf{diameter}. 
Recall that the diameter of a graph is defined as the maximal distance of two of its vertices; so connectedness is the same as having finite diameter.
But is there a sharp bound on $\diam G(I)$ depending only on the degree of the generators of $I$, say?

One result of this type has been recently found in the case of (squarefree) monomial ideals,  using ideas from metric geometry.

\begin{theorem}[Adiprasito--Benedetti \cite{AB}, cf. Section \ref{sec:combcomm}] \label{thm:AB}
Let $I \subset S$ be a monomial ideal generated in degree $2$. If $S/I$ is Cohen--Macaulay, then $\diam G(I) \le \height I$. 
\end{theorem}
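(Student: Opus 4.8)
The plan is to translate the statement into simplicial combinatorics and then feed it into the metric--geometric machinery of \cite{AB}. First I would dispose of the non-squarefree case: if $x_i^2\in I$, then $x_i$ lies in every minimal prime of $I$, so passing to $S/(x_i)$ leaves the minimal primes --- hence $G(I)$ --- unchanged and lowers $\height I$ by one, and one readily checks that Cohen--Macaulayness is preserved; induction on $\height I$ then reduces us to the squarefree case. So let $I=I_\Delta$ be the Stanley--Reisner ideal of a complex $\Delta$ on the vertex set $\{x_1,\dots,x_n\}$. That $I$ is generated in degree $2$ means the minimal non-faces of $\Delta$ are edges, i.e.\ $\Delta$ is \emph{flag}. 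That $S/I_\Delta$ is Cohen--Macaulay means $\Delta$ is pure --- say $\dim\Delta=d-1$, so facets have $d$ vertices and $\height I=n-\dim S/I=n-d$ --- and, by Reisner's criterion applied to links, every link of $\Delta$ of dimension $\geq 1$ is connected; thus $\Delta$ is \emph{normal}. Since $G(I)$ is by definition the dual (facet--adjacency) graph $G(\Delta)$, the claim becomes: the dual graph of a flag normal $(d-1)$-complex on $n$ vertices has diameter at most $n-d$. This is the flag case of the Hirsch bound.

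The second, elementary ingredient is that a \emph{non-revisiting} path automatically has length at most $n-d$. Call a path $F_0,F_1,\dots,F_\ell$ in $G(\Delta)$ non-revisiting if for every vertex $v$ the set $\{i:v\in F_i\}$ is an interval. By purity $|F_i\cap F_{i+1}|=d-1$, so $F_{i+1}=(F_i\cap F_{i+1})\cup\{b_{i+1}\}$ with $b_{i+1}\notin F_i$; the non-revisiting hypothesis then forbids $b_{i+1}\in F_0\cup\dots\cup F_i$, so each step of the path contributes a vertex not used before. Hence $d+\ell=|F_0\cup\dots\cup F_\ell|\leq n$, i.e.\ $\ell\leq n-d$. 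Therefore it suffices to show that \emph{every} pair of facets of $\Delta$ is joined in $G(\Delta)$ by a non-revisiting path.

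This last step is the heart of the matter, and is where I expect the genuine difficulty to lie. Here I would follow \cite{AB}: endow $\Delta$ with the piecewise--spherical metric in which each simplex is the all-right-angled spherical simplex (all dihedral angles $\pi/2$). By Gromov's lemma this metric is locally $\CAT(1)$ precisely because $\Delta$ is flag. Now take interior points of the two given facets and a shortest piecewise--geodesic between them, chosen generically so as to avoid all faces of codimension $\geq 2$; the facets it crosses form a path in $G(\Delta)$, and the local convexity forced by nonpositive curvature prevents this path from re-entering the open star of a vertex it has already left --- in other words, it is non-revisiting. Normality (the connected links supplied by Cohen--Macaulayness) is what makes the geodesic pass cleanly from a facet to a neighbouring facet and bounds the metric distance between the two facets, keeping us within the range $<\pi$ in which the $\CAT(1)$ convexity estimates are valid. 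Combining this with the counting above yields $\diam G(\Delta)\leq n-d=\height I$. The technical points requiring care are exactly the passage through low-dimensional faces and this short-distance restriction of $\CAT(1)$ geometry; these, rather than the combinatorics, are where the work of \cite{AB} is concentrated.
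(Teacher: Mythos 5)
Your argument is essentially the paper's: reduce to the squarefree case, identify the statement with the assertion that flag normal (here, flag Cohen--Macaulay) complexes satisfy the Hirsch bound, and obtain that bound by combining the elementary length count for non-revisiting paths with the Adiprasito--Benedetti existence theorem for such paths (which the paper, like you, uses as a black box). The one step you treat too lightly is the claim that Cohen--Macaulayness is ``readily checked'' to survive killing the variables $x_i$ with $x_i^2\in I$ --- iterating your reduction is exactly replacing $I$ by $\sqrt I$, and the preservation of Cohen--Macaulayness under taking radicals of monomial ideals is the nontrivial Herzog--Takayama--Terai result that the paper cites as Lemma \ref{lem:HTT} rather than proves, and which is false without the monomial hypothesis.
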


Beyond the world of monomial ideals, however, the situation is much less clear. From now on, we will call \emph{Hirsch} the ideals $I$ such that $\diam G(I) \le \height I$. The name is inspired by a long-standing combinatorial problem, posed in 1957 by Warren Hirsch and recently solved in the negative by Santos \cite{Santos}, which can be stated as follows: 

\begin{fconjecture}[Hirsch]
If $\Delta$ is the boundary of a convex polytope, then $I_\Delta$ is Hirsch. 
 \end{fconjecture}

The work by Santos and coauthors \cite{MSW} implies that for any $k$ one can construct squarefree monomial ideals $I=I(k)$ with $S/I$ even Gorenstein, such that  $\diam G(I)=21k$ and $\height I=20k$ (Example \ref{ex:nonHirsch}). However, these non-Hirsch ideals are generated in high degree. This motivated us to make the following conjecture:

\begin{conjecture} \label{con:main}
Let $I \subset S$ be an arbitrary ideal generated in degree $2$. If $S/I$ is Cohen--Macaulay, then $I$ is Hirsch. 
\end{conjecture}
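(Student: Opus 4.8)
The plan is to reduce Conjecture~\ref{con:main} to the monomial case of Theorem~\ref{thm:AB} by two successive simplifications, and to pin down the single step that genuinely resists the reduction. First I would pass to the radical: since the dual graph depends only on the minimal primes of $I$ and on their pairwise sums, one has $G(I)=G(\sqrt I)$ and $\height I=\height\sqrt I$, so $I$ is Hirsch if and only if $\sqrt I$ is, and it suffices to bound $\diam G(X)$ by $\codim_{\PP^n}X$ for the reduced arrangement $X=V(I)$. The one crucial subtlety is that $\sqrt I$ need not be generated in degree $2$ and $S/\sqrt I$ need not be Cohen--Macaulay, so both hypotheses must be carried on $I$ itself throughout.

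Next I would cut $X$ down to a curve arrangement. Enlarging $\KK$ to $\KK(t)$ if it is finite (a faithfully flat base change that preserves the minimal primes and their heights), I pick a generic hyperplane $H$ whenever $\dim X\ge 2$. By Bertini every component of $X$ meets $H$ in an irreducible variety, codimension-one intersections of components are inherited by the sections, and components that met in higher codimension still do; hence $G(X\cap H)\cong G(X)$. Moreover $\codim(X\cap H)=\codim X$, a generic linear form $\ell$ is a nonzerodivisor on the Cohen--Macaulay ring $S/I$ so $S/(I+(\ell))$ stays Cohen--Macaulay, and modulo $\ell$ the ideal is again generated in degrees $\le 2$. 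Iterating, we may assume $X\subset\PP^n$ is an arrangement of curves $C_1,\dots,C_m$ meeting pairwise in finitely many points, with $S/I$ a two-dimensional Cohen--Macaulay ring cut out by quadrics; the goal becomes $\diam G(X)\le n-1$, where $G(X)$ is the intersection graph of the $C_i$.

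Finally I would degenerate to monomials. Choosing a term order $<$ for which $J:=\operatorname{in}_<(I)$ is a squarefree monomial ideal that is again Cohen--Macaulay and generated in degree $2$, Theorem~\ref{thm:AB} gives $\diam G(J)\le\height J=\height I$; it then remains to transfer this bound from $J$ to $I$. Heuristically this ought to go through because the Gröbner degeneration $V(I)\rightsquigarrow V(J)$ can only \emph{refine} the decomposition into components --- each component of $X$ specializes to a union of components of $V(J)$ --- so one expects a distance-nonincreasing correspondence from $G(J)$ onto $G(X)$.

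The difficulty, and the reason this remains only a plan, is concentrated in the last step. A quadratically generated ideal need not have a squarefree, or even a quadratic, initial ideal (being $G$-quadratic is strictly stronger than being quadratically generated), and Cohen--Macaulayness of $S/I$ does not force it for $S/J$; so even arriving at a situation where Theorem~\ref{thm:AB} applies is unclear. Worse, the transfer $G(J)\to G(X)$ is exactly the kind of statement that fails for coarse invariants: the number of minimal primes can strictly increase under degeneration, and I know of no lemma of the shape ``if $J=\operatorname{in}_<(I)$ and $S/J$ is connected in codimension one then $\diam G(I)\le\diam G(J)$''. An alternative route --- inducting by linking $I$ to a Cohen--Macaulay ideal through a complete intersection of $\height I$ quadrics contained in $I$, in the spirit of the proof of Main Theorem~\ref{mthm:1} --- runs into the same wall, since the residual ideal need not remain quadratic. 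I therefore expect that a genuine proof will require a bound on $\diam G(I)$ sensitive to the quadratic generation of $I$ directly, rather than one funnelled through $\sqrt I$ or $\operatorname{in}_<(I)$, where precisely the hypotheses one needs are destroyed.
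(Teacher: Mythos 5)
The statement you were asked to prove is labelled \emph{Conjecture} in the paper, and the paper does not prove it: it only establishes special cases (monomial ideals via Theorem~\ref{thm:AB}, small height or regularity in Section~\ref{sec:examples}, canonically embedded line arrangements in Theorem~\ref{thm:LinesArrangement}) and reductions (to quadrics, to curves). Your proposal is likewise not a proof, and to your credit you say so explicitly. What you have written essentially retraces the paper's own reduction machinery: your hyperplane-section step is the content of Lemma~\ref{lem:redtodim2} and Proposition~\ref{prop:RedToCurves}, and your observation that passing to $\sqrt{I}$ or to an initial ideal destroys exactly the hypotheses one needs (quadratic generation and Cohen--Macaulayness) is the same obstruction the authors confront; indeed Lemma~\ref{lem:HTT} is quoted precisely because Cohen--Macaulayness survives taking radicals \emph{only} in the monomial case, which is why Corollary~\ref{cor:monomial} goes through there and nowhere else.

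Two substantive cautions on the parts you do carry out. First, the reduction to curves is not free of hypotheses: the paper needs $S/\pp$ Cohen--Macaulay for every minimal prime $\pp$ (or $I$ defining a subspace arrangement) to guarantee via Bertini that the hyperplane sections of the components stay irreducible as \emph{schemes} and that no new adjacencies are created; the height bookkeeping in Lemma~\ref{lem:redtodim2} (heights of $\pp_i+\pp_j$ drop by one exactly when the intersection was irrelevant) is what rules out spurious edges, and your phrase ``components that met in higher codimension still do'' elides this. Accordingly, the paper's corollaries reduce the conjecture to curves only under that extra assumption, not in general. Second, your Gr\"obner degeneration step is, as you suspect, not salvageable as stated: quadratic generation does not imply a quadratic (let alone squarefree) initial ideal, Cohen--Macaulayness does not descend to $\operatorname{in}_<(I)$, and there is no known comparison between $\diam G(I)$ and $\diam G(\operatorname{in}_<(I))$ in either direction. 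So the gap you identify is real, and it is the gap that makes the statement a conjecture rather than a theorem; no proof of the full statement exists in the paper for you to be measured against.
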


In Section \ref{sec:examples}, we show some partial argument in favor of Conjecture \ref{con:main}, proving it for all ideals of small height or regularity. A positive solution of Conjecture \ref{con:main} would instantly imply also a polynomial upper bound (in terms of the number of variables) for ideals generated in higher degree: See Proposition \ref{prop:RedToQuadrics}.

Using techniques that are essentially combinatorial, although some algebraic geometry is required for the setup, in Section \ref{sec:arrangement} we are able to obtain the following result:

\begin{mthm}[Theorem \ref{thm:LinesArrangement}]
Let $C\subset \PP^N$ be an arrangement of projective lines such that no three lines meet in the same point. If $C$ is canonically embedded, then its defining ideal $I$ is Hirsch, that is, the diameter of the graph $G(I)$ is not larger than $\codim_{\PP^N}C$. 
\end{mthm}

``Canonically embedded'' refers here to the technical requirement that the canonical sheaf $\omega_C$ is isomorphic to the pull-back of the twisted structural sheaf $\mathcal{O}_{\PP^N}(1)$. This condition is natural in order to produce embeddings that are quadratic and Cohen--Macaulay. (As a scheme, $C$ can be embedded in several ways; the canonical embedding tends to be quadratic, while other embeddings may result in ideals generated in very high degree.) 

\bigskip

The paper is structured as follows: we start with a ``background" section, consisting of essentially known results and useful reductions. The reader already familiar with combinatorics and commutative algebra may skip to Sections \ref{sec:gor} and \ref{sec:arrangement}, which form the core of the paper. Section \ref{sec:examples} is finally a repertoire of interesting examples.

\bigskip

The authors would like to thank Manolis Tsakiris for his comments concerning the previous version of this paper.

\section{Background}\label{sec:background}

\subsection{Combinatorics: Graph Connectivity and Diameter}
All graphs we consider have neither loops nor parallel edges.
A graph $G$ is called {\it $k$-vertex-connected} (or simply {\it $k$-connected}) if it has at least $k+1$ vertices, and any two vertices of $G$ are joined by at least $k$ vertex-disjoint paths. 
So $1$-connected is the same as connected. Similarly, $G$ is called {\it $k$-edge-connected} if it has at least $k+1$ vertices, and any two vertices of $G$ are joined by at least $k$ edge-disjoint paths. $1$-edge-connected is the same as connected. Obviously $k$-vertex-connected implies  $k$-edge-connected for all $k$. The converse is true only for $k=1$: for example, two squares glued together at a vertex yield a $2$-edge-connected graph that is not $2$-connected. In any $k$-edge-connected graph, every vertex has degree at least $k$. The converse is false.

There is a well known characterization of the two notions of connectivity:

\begin{theorem}[Menger] \label{thm:menger} 
Let $G$ be a graph on $n$ vertices. Let $0<k<n$ be an integer.
\begin{compactenum}[\rm(i)]
\item $G$ is $k$-connected $\iff$ $G$ cannot be disconnected by removing less than $k$ vertices, however chosen.
\item $G$ is $k$-edge-connected $\iff$ $G$ cannot be disconnected by removing less than $k$ edges, however chosen.
\end{compactenum}
\end{theorem}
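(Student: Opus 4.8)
The plan is to deduce both equivalences from the \emph{local} (two-vertex) form of Menger's theorem, and then to prove that local form by induction on the number of edges. Recall the local statement: for distinct non-adjacent vertices $s,t$ of a graph $H$, the maximum number of internally vertex-disjoint $s$-$t$ paths equals the minimum cardinality of an $s$-$t$ \emph{separator}, i.e.\ of a set $T$ with $s,t\notin T$ whose deletion leaves $s$ and $t$ in different components. The inequality ``$\le$'' is immediate, since any separator must contain an internal vertex of each of the given disjoint paths; so the content is ``$\ge$'': if every $s$-$t$ separator has size at least $k$, there are $k$ internally disjoint $s$-$t$ paths.

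I would prove this by induction on $|E(H)|$, the base case $E(H)=\emptyset$ being trivial (then $k=0$). First a normalization: if $s$ and $t$ have a common neighbour $v$, then every $s$-$t$ separator of $H-v$ has size $\ge k-1$, so induction (a graph with fewer edges) gives $k-1$ internally disjoint $s$-$t$ paths in $H-v$, and appending the path $s,v,t$ gives $k$. Assume henceforth no common neighbour, so every $s$-$t$ path has length $\ge 3$; choose an edge $e=xy$ with $x,y\notin\{s,t\}$ on a shortest $s$-$t$ path, and contract it. If the minimum $s$-$t$ separator of the (simple) contraction $H/e$ still has size $\ge k$, induction gives $k$ internally disjoint $s$-$t$ paths there; at most one of them passes through the contracted vertex, so they lift to $k$ internally disjoint paths of $H$. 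Otherwise the contraction drops the separation number to $k-1$, and a minimum separator $T$ of $H/e$ must contain the contracted vertex (else $T$ would already be an $s$-$t$ separator of $H$ of size $<k$); replacing that vertex in $T$ by $x$ and $y$ produces a \emph{minimum} $s$-$t$ separator $S$ of $H$ with $x,y\in S$. Now let $V_s$ be the vertex set of the component of $s$ in $H-S$, put $V_t=V(H)\setminus(S\cup V_s)$, and form two auxiliary graphs: $H_s$, induced by $H$ on $V_s\cup S$ together with one new vertex adjacent to all of $S$, and $H_t$ defined symmetrically. Minimality of $S$ forces every vertex of $S$ to have a neighbour in $V_s$ and one in $V_t$; combined with the no-common-neighbour reduction, a direct edge count shows that \emph{both} $H_s$ and $H_t$ have strictly fewer edges than $H$. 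Induction then yields $k$ internally disjoint paths in $H_s$ from $s$ to its new vertex and $k$ in $H_t$ from $t$ to its new vertex; since $|S|=k$ and the paths are pairwise internally disjoint, each of them meets $S$ in exactly one vertex, so the $V_s$-halves and $V_t$-halves can be spliced along $S$ into $k$ internally disjoint $s$-$t$ paths of $H$.

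Granting the local theorem, part~(i) is short. ``$\Rightarrow$'': if $G$ is $k$-connected then any two vertices are joined by $k$ internally disjoint paths, which no set of fewer than $k$ vertices can block, so no such set disconnects $G$; and $|V(G)|\ge k+1$ by definition of $k$-connected. ``$\Leftarrow$'': suppose $G$ has $n>k$ vertices and stays connected after deleting any $k-1$ of them, and fix $u,w\in V(G)$. If $u\not\sim w$, every $u$-$w$ separator disconnects $G$, hence has size $\ge k$, so the local theorem gives $k$ internally disjoint $u$-$w$ paths. If $u\sim w$, delete the edge $uw$; the minimum $u$-$w$ separator of $G-uw$ then has size $\ge k-1$ (otherwise such a separator, enlarged by at most one suitably chosen vertex, would disconnect $G$ after deletion of fewer than $k$ vertices), so the local theorem gives $k-1$ internally disjoint $u$-$w$ paths in $G-uw$, and adjoining the edge $uw$ produces $k$. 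Part~(ii) is proved in exactly the same two stages, using the \emph{local edge} version of Menger (maximum number of edge-disjoint $s$-$t$ paths $=$ minimum size of an $s$-$t$ edge cut), which one gets either from the vertex version by a routine gadget or by the same contraction induction (in fact slightly easier here), and then the global deduction goes through verbatim --- except that the case $u\sim w$ needs no separate treatment, the edge $uw$ being itself one of the edge-disjoint paths, whose removal drops the edge-cut number by exactly $1$.

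The main obstacle is the inductive step of the local vertex theorem in the case of a minimum separator lying close to $s$ or to $t$: the contraction trick neatly reduces matters to a minimum separator $S$ containing an edge, but one must then verify that \emph{both} derived graphs $H_s,H_t$ genuinely have fewer edges, and it is precisely there that the preliminary elimination of common neighbours of $s$ and $t$ is needed. Everything downstream of the local theorem --- both directions of (i) and all of (ii) --- is routine bookkeeping with separators. One could instead sidestep the induction entirely by invoking the max-flow/min-cut theorem with all capacities equal to $1$ (splitting each vertex into an in-copy and an out-copy for the vertex version): integrality of a maximum flow then furnishes the disjoint paths and a minimum cut the separator. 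I prefer the self-contained inductive argument.
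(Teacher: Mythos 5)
Your proposal is correct and complete in outline. Note, however, that the paper does not prove Theorem \ref{thm:menger} at all: it is stated as a classical result, with a pointer to Diestel for a direct proof and the remark that both parts follow from Ford--Fulkerson's max-flow--min-cut theorem (the very route you mention and decline at the end). So the comparison is between your self-contained argument and the paper's preferred citation-level argument via network flows. Your inductive proof of the local vertex form is the standard textbook one, and you correctly identify and handle its one delicate point: after contracting an edge $xy$ of a shortest $s$-$t$ path one obtains a minimum separator $S\ni x,y$, and the edge counts $|E(H_s)|,|E(H_t)|<|E(H)|$ can only fail when $N(s)=S$ or $N(t)=S$ with no further edges meeting the corresponding side --- both of which are excluded, the first because $y\in N(s)$ would shorten the chosen path, the second because $x\in N(s)\cap N(t)$ would contradict the prior elimination of common neighbours. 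The global deductions are also right; in the adjacent case of (i) the existence of the ``suitably chosen'' third vertex $z$ outside $T\cup\{u,w\}$ uses exactly the hypothesis $n\ge k+1$, which is worth saying explicitly. What the flow-based route buys is brevity and a uniform treatment of (i) and (ii) (vertex splitting versus unit edge capacities); what your route buys is self-containedness and no appeal to LP duality or augmenting paths. Either is acceptable; for this paper the result is background and the citation suffices.
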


For a direct proof of this, see [Diestel]; both (i) and (ii) are easy instances of Ford--Fulkerson's ``max-flow-min-cut theorem'', cf. \cite{Bollobas}.

The \emph{distance} of two vertices in a graph is the number of edges of a shortest path joining them. The \emph{diameter} of a graph is the maximum of the distances between its vertices. As the intuition suggests, the more connected a graph is, the shorter its diameter:

\begin{lemma}[folklore] \label{lem:menger} Let $G$ be a graph on $s$ vertices having $t$ edges;
\begin{compactenum}[\rm(a)]
\item if $G$ is $k$-connected, then $\diam G\le \lfloor (s-2)/k \rfloor + 1$; 
\item if $G$ is $k$-edge-connected, then $\diam G\le \lfloor t/k \rfloor$.
\end{compactenum}
\end{lemma}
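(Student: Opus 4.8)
The plan is to derive both bounds from Menger's theorem (Theorem~\ref{thm:menger}) by a simple path-counting argument: a pair of vertices at maximal distance forces every family of disjoint paths joining them to be ``long'', while the total size of such a family (measured in vertices for (a), in edges for (b)) is bounded by $s$, respectively $t$.

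For part~(a), I would start by picking vertices $u,v$ at distance $\diam G =: d$ from one another. Since $k$-connectedness forces $G$ to have at least $k+1$ vertices, the hypothesis $0<k<s$ of Menger's theorem is satisfied, and Theorem~\ref{thm:menger}(i) supplies $k$ paths $P_1,\dots,P_k$ from $u$ to $v$ that are pairwise disjoint except for the common endpoints $u$ and $v$. Each $P_i$ has length at least $d$, because a shortest $u$--$v$ path has length $d$; hence $P_i$ contains at least $d-1$ vertices other than $u$ and $v$, and for distinct indices these ``interior'' vertex sets are pairwise disjoint. Counting vertices yields $s \ge 2 + k(d-1)$, i.e.\ $d-1 \le (s-2)/k$; as $d$ is an integer this is exactly $d \le \lfloor (s-2)/k \rfloor + 1$.

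Part~(b) is the same argument one level down. Again take $u,v$ at distance $d = \diam G$, and apply Theorem~\ref{thm:menger}(ii) to obtain $k$ pairwise edge-disjoint $u$--$v$ paths, each using at least $d$ edges. Disjointness of their edge sets gives $t \ge kd$, hence $d \le t/k$, and integrality of $d$ upgrades this to $d \le \lfloor t/k \rfloor$.

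I do not expect any real obstacle here; the only points deserving a word of care are the degenerate cases. One must note that $k$-connectivity (respectively $k$-edge-connectivity) already guarantees $s \ge k+1 > k$, so that Menger's theorem is applicable, and that the estimate ``at least $d-1$ interior vertices per path'' remains correct (if vacuous) when $d \le 1$, which covers the trivial situations in which $G$ has one or two vertices.
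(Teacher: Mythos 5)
Your argument is correct and essentially identical to the paper's: both pick two vertices at maximal distance $d$, take $k$ internally disjoint (resp.\ edge-disjoint) paths between them, count at least $d-1$ interior vertices (resp.\ $d$ edges) per path, and conclude by integrality of $d$. The only cosmetic difference is that you invoke Menger's theorem, whereas the paper gets the disjoint paths directly from its definition of $k$-connectivity; the counting is the same.
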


\begin{proof} 
We show item (a); item (b) is analogous. Let $d$ be the diameter of $G$. 
If $d \le 1$ the claim is obvious. If $d \ge 2$, choose two vertices $x$, $y$ at distance $d$. By the connectivity assumption, there are $k$ vertex-disjoint paths joining $x$ and $y$. Each of these paths contains at least $d-1$ vertices in its relative interior. 
Together with $x$ and $y$, this yields a set of at least $k (d-1) + 2$ vertices inside $G$. So $k(d -1) + 2 \le  s$, whence the conclusion follows because $d=\diam G$ is an integer.
\end{proof}

For any connected graph $G$ with $s$ vertices, one has $\diam G \le s - 1$, with equality if and only if $G$ is a path. Since we are interested in upper bounds for the diameter, in the next section we review the known upper bounds on the number of vertices of $G=G(I)$.

\subsection{Commutative Algebra: The number of minimal primes}
Throughout this section, $S$ will denote the polynomial ring $\KK[x_1, \ldots, x_n]$; $I$ will be a height-unmixed graded ideal of $S$;  
$\Min(I)$ will denote the set of minimal primes of $I$. 

To provide an upper bound for the number of vertices of $G(I)$, let us recall a simple definition. 
If $d$ is the Krull dimension of $S/I$ there is a polynomial $h \in \Z[t]$, called the \emph{$h$-polynomial}, such that
$\sum_{i \in N} \, \dim_{\KK} \, (S/I)_i \; t^i \, \, = \, \frac{h(t)}{(1-t)^d}$.
The integer $e(S/I)=h(1)$ obtained by evaluating the $h$-polynomial at $1$ is called \emph{multiplicity}\footnote{The multiplicity is sometimes called \emph{degree} in the literature. We refrain from this notation to avoid confusions with the  degree of the polynomials generating $I$.}  of $S/I$. 
The multiplicity satisfies the following additive formula:

\begin{equation}\label{eq:addgen}
e(S/I)=\sum_{\pp\in \Min(I)}\mathrm{length} (S/I)_{\pp} \, \cdot \, e(S/\pp).
\end{equation}

From \eqref{eq:addgen} we see that $e(S/I)$ is a sum of $|\Min(I)|$ positive integers. This implies the following:

\begin{lemma}\label{lem:generalbound}
For any height-unmixed graded ideal $I$, the number of vertices of $G(I)$ is at most $e(S/I)$.
\end{lemma}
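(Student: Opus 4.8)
The plan is simply to unwind the definitions and lean on the additivity formula \eqref{eq:addgen}. By construction the vertices of $G(I)$ are in bijection with $\Min(I)$, so the assertion is exactly the inequality $|\Min(I)| \le e(S/I)$. Since the right-hand side of \eqref{eq:addgen} is a sum indexed by $\Min(I)$, it suffices to check that every summand $\dim_{\KK}(S/I)_{\pp}\cdot e(S/\pp)$ is a positive integer; then $e(S/I)$ is a sum of $|\Min(I)|$ integers each at least $1$, and we are done.

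First I would argue that each of the two factors is a positive integer. Fix $\pp\in\Min(I)$. Because $\pp$ is a \emph{minimal} prime over $I$, the localization $(S/I)_{\pp}$ has Krull dimension $0$, hence is an Artinian local ring; it is moreover nonzero, since the image of $1$ survives. Therefore $\dim_{\KK}(S/I)_{\pp}=\ell_{S_{\pp}}\big((S/I)_{\pp}\big)\ge 1$. For the second factor, $S/\pp$ is a standard graded domain, and the multiplicity of such a ring is at least $1$: its $h$-polynomial evaluated at $1$ equals $(\dim S/\pp)!$ times the leading coefficient of the Hilbert polynomial, which is a positive integer (for the zero-dimensional case $e(S/\pp)=\dim_{\KK}S/\pp\ge 1$ directly). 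Hence the product $\dim_{\KK}(S/I)_{\pp}\cdot e(S/\pp)$ is a positive integer for every $\pp\in\Min(I)$, and summing over $\Min(I)$ via \eqref{eq:addgen} gives $e(S/I)\ge |\Min(I)|$, which is the number of vertices of $G(I)$. The degenerate cases cause no trouble: if $I=S$ then $G(I)$ has no vertices and $e(S/I)=0$, while if $\dim S/I=0$ then $G(I)$ has a single vertex and $e(S/I)=\dim_{\KK}S/I\ge 1$.

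I do not expect a genuine obstacle here: the one nontrivial input is the associativity/additivity formula for multiplicities, \eqref{eq:addgen} itself, but this is quoted as known in the excerpt, so the lemma reduces to the elementary positivity bookkeeping above. If I wanted to avoid invoking \eqref{eq:addgen}, an alternative would be to filter $S/I$ by submodules whose successive quotients are of the form $(S/\pp_j)(-a_j)$ and count how often each minimal prime appears with the correct multiplicity of $S/\pp$; but that essentially reproves \eqref{eq:addgen}, so the route through the stated formula is the cleanest.
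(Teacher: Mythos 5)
Your proposal is correct and is essentially the paper's argument: the lemma is stated immediately after formula \eqref{eq:addgen} with the one-line justification that $e(S/I)$ is a sum of $|\Min(I)|$ positive integers, which is exactly the positivity bookkeeping you carry out in detail. No differences worth noting.
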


In case $I$ is a radical ideal, we have $I=\bigcap_{\pp\in\Min(I)}\pp$ and
$IS_{\pp}=\pp S_{\pp}$ for all $\pp\in\Min(I)$. In particular, $\mathrm{length} (S/I)_{\pp}=1$, which allows us to simplify Equation \eqref{eq:addgen} as follows:

\begin{equation}\label{eq:addrad}
e(S/I)=\sum_{\pp\in \Min(I)} \!\! e(S/\pp).
\end{equation}

In fact, if $I$ has no embedded primes, \eqref{eq:addrad} holds if and only if $I$ is radical.

\begin{remark} \label{rem:vgsa}
It is well known that $e(S/\pp)=1$ if and only if $\pp$ is generated by linear forms. So if $I\subset S$ is an ideal defining a subspace arrangement, 
\[e(S/I) \; = \; |\Min(I)| \; = \; \textrm{ number of vertices of } G(I).\]
 \end{remark}

Remark \ref{rem:vgsa} suggests that the case of subspace arrangements is one of the most promising for finding examples of ideals with large diameter.  For subspace arrangements, in fact, the graph $G(I)$ has the largest possible number $s$ of vertices -- so the upper bound $\diam G(I) \le s - 1$ becomes less restrictive.

To prove further upper bounds for the number of vertices of $G(I)$, we need to recall a classical definition. Let

\[\cdots \rightarrow F_j \rightarrow \cdots \rightarrow F_0 \rightarrow S/I \rightarrow 0\]
be a minimal graded free resolution for the quotient $S/I$. The \emph{Castelnuovo--Mumford regularity}  
$\reg(S/I)$ of $S/I$ is the smallest integer $r$ such that for each $j$, all minimal generators of $F_j$ have degree $\le r + j$. The regularity does not change if we quotient out by a regular element of degree one. It can be characterized using Grothendieck duality as follows:
\begin{equation} \label{eq:grothendieck}
\reg(S/I) = \max
\{ i+j \, : \, 
H^i_\mm  (S/I)_j \ne 0
\},
\end{equation}  
where $H^i_\mm$ stands for local cohomology with support in the maximal ideal $\mm=(x_1, \ldots, x_n)$. This implies the following, well-known lemma (cf. \cite[Theorem 4.4.3]{BrunsHerzog}):

\begin{lemma} \label{lem:deghreg}
Let $I$ be a graded ideal. Let $h(t)$ be the $h$-polynomial of $S/I$. If $S/I$ is Cohen--Macaulay, then 
$\deg(h)=\operatorname{reg} (S/I)$.
\end{lemma}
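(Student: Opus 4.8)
The plan is to reduce to the Artinian case by cutting down with a linear system of parameters, and then to read off both quantities directly from what remains. As a preliminary step I would arrange that $\KK$ is infinite: both the $h$-polynomial (which depends only on the Hilbert function) and the regularity (which can be computed from the graded Betti numbers, and these are preserved under flat base change by a field extension) are unchanged if we replace $S,I$ by $S\otimes_\KK\KK'$ and $I\otimes_\KK\KK'$ for $\KK' = \overline{\KK}$ or $\KK' = \KK(t)$. So there is no loss in assuming $\KK$ infinite.

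Now, since $S/I$ is Cohen--Macaulay of Krull dimension $d$ and $\KK$ is infinite, there is a system of parameters $\theta_1,\dots,\theta_d$ consisting of linear forms, and by Cohen--Macaulayness it is a regular sequence on $S/I$. Set $A := S/\bigl(I+(\theta_1,\dots,\theta_d)\bigr)$, a graded Artinian quotient. I would then record two facts. (i) Iterating the short exact sequences $0\to (S/J)(-1)\xrightarrow{\ \theta\ } S/J\to S/(J,\theta)\to 0$, valid whenever $\theta$ is a nonzerodivisor, multiplies the Hilbert series by $(1-t)$ at each of the $d$ steps, so $\operatorname{HS}(A;t) = (1-t)^d\,\operatorname{HS}(S/I;t) = h(t)$; since the coefficients of $\operatorname{HS}(A;t)$ are the dimensions $\dim_\KK A_i\ge 0$ and $h(1)=e(S/I)>0$, the polynomial $h$ is nonzero and $\deg h = \max\{\,i : A_i\ne 0\,\}$. (ii) Because the regularity is unchanged upon quotienting by a nonzerodivisor (recalled just before \eqref{eq:grothendieck}), applying this $d$ times gives $\reg A = \reg(S/I)$.

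It then remains only to check that for a graded Artinian algebra $A$ one has $\reg A = \max\{\,i : A_i\ne 0\,\}$. This is immediate from the local-cohomology description \eqref{eq:grothendieck}: since $A$ has finite length it is $\mm$-torsion, so $H^0_\mm(A) = A$ and $H^i_\mm(A) = 0$ for $i>0$; hence $\reg A = \max\{\,i+j : H^i_\mm(A)_j\ne 0\,\} = \max\{\,j : A_j\ne 0\,\}$. Combining this with (i) and (ii) yields $\deg h = \reg(S/I)$.

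There is no genuinely hard step here; the only points needing care are the innocuous field-extension reduction (needed merely so that a linear system of parameters exists) and the bookkeeping that the numerator of the Hilbert series becomes \emph{exactly} the polynomial $h(t)$ after cutting by precisely $d=\dim S/I$ linear forms --- neither more nor fewer. This is exactly where Cohen--Macaulayness enters twice: to guarantee that the linear system of parameters is a regular sequence, and to guarantee $\operatorname{depth}(S/I)=d$ so that the cutting terminates at an Artinian ring with the expected Hilbert series. If one preferred to avoid local cohomology, one could instead argue via the minimal free resolution, using that for a Cohen--Macaulay quotient the regularity is attained at the last homological step (of length $\codim I$); but that route rests on the same duality input, so the Artinian reduction above is the most economical.
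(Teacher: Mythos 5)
Your proof is correct. The paper gives no proof of this lemma, deferring to \cite[Theorem 4.4.3]{BrunsHerzog} and the local cohomology characterization \eqref{eq:grothendieck}; your Artinian reduction is precisely the standard argument behind that reference, so the approach is essentially the same as the one the paper intends.
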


\begin{lemma} \label{lem:regularity}
Let $I\subset S$ be a height-unmixed graded ideal of height $c$. Let $s$ be the number of vertices of $G(I)$. 
\begin{compactenum}[\rm (i)]
\item If all minimal generators of $I$ have degree $\le k$, then
$s \leq k^c$. 
\item If $S/I$ is Cohen--Macaulay and has Castelnuovo-Mumford regularity $r$, then
\[s \leq \sum_{i=0}^{r} \binom{c+i-1}{i}.\]
\end{compactenum}
\end{lemma}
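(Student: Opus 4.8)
The plan is to derive both inequalities from the bound $s\le e(S/I)$ of Lemma~\ref{lem:generalbound}, reducing each statement to an upper estimate for the multiplicity $e(S/I)$. Moreover I would assume throughout that $\KK$ is infinite: this is harmless, since the multiplicity $e(S/I)$, the height of $I$, the Cohen--Macaulay property of $S/I$, and the regularity $\reg(S/I)$ are all unchanged under extension of the base field, and none of the two conclusions involves $\KK$.

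For part (i), I would first produce a complete intersection $J=(f_1,\dots,f_c)\subseteq I$ of height $c$ all of whose generators have degree at most $k$. One builds the regular sequence $f_1,\dots,f_c$ one element at a time: having found $f_1,\dots,f_i$ in $I$ of degree $\le k$ with $i<c$, the finitely many associated primes $\pp_1,\dots,\pp_t$ of $(f_1,\dots,f_i)$ all have height $i<c=\height I$, so none of them contains $I$; since $I$ is generated in degrees $\le k$, a short argument shows that the degree-$k$ homogeneous component $I_k$ is not contained in any $\pp_l$ either, and since $\KK$ is infinite the vector space $I_k$ cannot be the union of the proper subspaces $I_k\cap\pp_l$, which yields $f_{i+1}\in I_k$ extending the regular sequence. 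With $J$ at hand, I would apply the associativity formula~\eqref{eq:addgen} to both $I$ and $J$: every minimal prime $\pp$ of $I$ contains $J$ and has height $c=\height J$, hence is minimal over $J$, and the surjection $(S/J)_\pp\twoheadrightarrow(S/I)_\pp$ shows $\operatorname{length}(S/J)_\pp\ge\operatorname{length}(S/I)_\pp$; restricting the sum~\eqref{eq:addgen} for $J$ to the primes of $\Min(I)$ then gives $e(S/I)\le e(S/J)=\prod_{i=1}^c\deg f_i\le k^c$, so $s\le e(S/I)\le k^c$.

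For part (ii), let $h$ denote the $h$-polynomial of $S/I$, so that $e(S/I)=h(1)$, and recall that Lemma~\ref{lem:deghreg} gives $\deg h=\reg(S/I)=r$ because $S/I$ is Cohen--Macaulay. I would then pass to an Artinian reduction: choosing a linear system of parameters $\ell_1,\dots,\ell_d$ for $S/I$ (with $d=\dim S/I=n-c$), which is a regular sequence by the Cohen--Macaulay hypothesis, the standard graded algebra $A=S/(I+(\ell_1,\dots,\ell_d))$ is Artinian and has the same $h$-polynomial as $S/I$; hence $h_j=\dim_\KK A_j\ge 0$ for all $j$, and $h_j=0$ for $j>r$. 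A direct computation shows that the embedding dimension of $A$ is at most $c$ --- indeed $\dim_\KK A_1=c-\dim_\KK I_1+\dim_\KK\!\big(I_1\cap\langle\ell_1,\dots,\ell_d\rangle\big)\le c$ --- so $A$ is a quotient of a polynomial ring in at most $c$ variables, whence $h_j=\dim_\KK A_j\le\binom{c+j-1}{j}$ for every $j$. Summing over $j$, $s\le e(S/I)=h(1)=\sum_{j=0}^{r}h_j\le\sum_{j=0}^{r}\binom{c+j-1}{j}$.

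The multiplicity comparison, the associativity-formula bookkeeping, and the Artinian reduction are all routine. I expect the only genuinely delicate point to be the \emph{degree-controlled} prime avoidance used in part (i): ordinary graded prime avoidance only returns a homogeneous element of uncontrolled degree, so one must really exploit that $I$ is generated in degrees at most $k$ (and that $\KK$ may be assumed infinite) in order to keep the auxiliary regular sequence in degree at most $k$. Everything else is standard commutative algebra.
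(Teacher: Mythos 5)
Your proposal is correct and follows essentially the same route as the paper: both parts reduce to bounding $e(S/I)$ via Lemma~\ref{lem:generalbound}, with (i) handled by a degree-$\le k$ regular sequence $J\subseteq I$ obtained by prime avoidance over an infinite field and the comparison $e(S/I)\le e(S/J)\le k^c$, and (ii) by a linear Artinian reduction whose Hilbert function is the $h$-vector and is dominated by that of $\KK[x_1,\dots,x_c]$ truncated above degree $r$. You merely spell out the details (degree-controlled prime avoidance, the associativity-formula comparison, the embedding-dimension count) that the paper leaves implicit.
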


\begin{proof}
\begin{compactenum}[\rm (i)]
\item By Lemma \ref{lem:generalbound}, it suffices to prove that $e(S/I) \leq k^c$. Since the Hilbert function is preserved under field extensions, without loss of generality we may assume that $\KK$ is infinite. Let us choose an $S$-regular sequence $f_1,\ldots ,f_c$ of degree-$k$ polynomials such that $J=(f_1,\ldots ,f_c)\subset I$. Then
$e(S/I) \le e(S/J)=k^c$.
\item As before, we may assume that $\KK$ is infinite. A linear Artinian reduction of $S/I$ (that is $S/I$ mod out a linear system of parameters) will look like a $\KK$-vector subspace of $A=\displaystyle\frac{\KK[x_1,\ldots ,x_c]}{(x_1,\ldots ,x_c)^{r+1}}$. But then
$e(S/I) \, \le \, e(A) \, = \, \sum_{i=0}^{r} \binom{c+i-1}{i}$. \qedhere
\end{compactenum}
\end{proof}
 
 
 \subsection{Combinatorial commutative algebra: reduction to radicals} \label{sec:combcomm}
In this section we provide a quick overview on the situation for monomial ideals, showing that for such ideals the connectivity and diameter problems can be reduced to the radical case and ultimately to the world of simplicial complexes, where we can exploit the recent results of \cite{AB}. We sketch the basic definitions, referring to \cite[Chapter~1]{MiSt} for details. 

Let $n$ be a positive integer. A \emph{simplicial complex on $n$ vertices} is a finite collection $\Delta$ of subsets of $\{1, \ldots, n\}$ (called \emph{faces}) that is closed under taking subsets. The \emph{dimension} of a face is its cardinality minus one. A \emph{facet} is an inclusion-maximal face; ``$d$-face'' is short for ``$d$-dimensional face'' and ``vertex'' is short for ``$0$-face''. The dimension of a simplicial complex is the largest dimension of a face in it. A simplicial complex is \emph{pure} if all its facets have the same dimension. The \emph{dual graph} of a pure simplicial complex $\Delta$ is defined as follows: The graph vertices correspond to the facets of $\Delta$, and two vertices are connected by an edge if and only if the corresponding facets share a face of dimension one less. 

The \emph{Stanley--Reisner ideal} $I_\Delta$ of a simplicial complex $\Delta$ with $n$ vertices is the ideal of $\KK[x_1, \ldots ,x_n]$ defined by
$I_\Delta := \left(x_{i_1} \cdots x_{i_r} \; : \; \{i_1, \ldots, i_r\} \notin \Delta \right).$
By construction,  $I_\Delta$ is generated by squarefree monomials. Conversely, every radical monomial ideal $J$ is generated by squarefree monomials and can be written as $J=I_{\Delta}$ for a suitable simplicial complex~$\Delta$. So ``simplicial complexes on $n$ vertices'' are in bijection with ``radical monomial ideals of $S=\KK[x_1, \ldots ,x_n]$''. Moreover, the minimal primes of $I_\Delta$ can be described combinatorially  via the formula 
\[I_\Delta = \bigcap_{F \textrm{ facet of } \Delta}(x_i \, : i \notin F).\] 
 The height of an ideal generated by $c$ distinct variables is $c$. In particular, if $\Delta$ has $n$ vertices and all its facets are $d$-dimensional, the height of any minimal prime of $I_\Delta$ is $n-d-1$.

\begin{lemma} \label{lem:combcomm} If $I_{\Delta}$ is the Stanley--Reisner ideal of a pure simplicial complex $\Delta$, the dual graph of $\Delta$ is $G(I_\Delta)$.
\end{lemma}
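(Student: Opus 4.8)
The plan is to verify directly that the two graphs have the same vertex set and the same edge set, by translating the algebraic adjacency condition defining $G(I_\Delta)$ into a combinatorial condition, using the explicit description of the minimal primes of $I_\Delta$ recalled above.

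First I would fix notation: let $\Delta$ be pure of dimension $d$ with facets $F_1,\dots,F_s \subseteq \{1,\dots,n\}$, so $|F_i| = d+1$ for every $i$. From the formula $I_\Delta = \bigcap_{F \text{ facet of } \Delta}(x_i : i\notin F)$ one reads off that the minimal primes of $I_\Delta$ are exactly the ideals $\pp_i := (x_j : j \notin F_i)$, each a prime generated by $n-(d+1)$ distinct variables, hence of height $c := n-d-1$. In particular $I_\Delta$ is height-unmixed, so $G(I_\Delta)$ is defined, and its vertex set is canonically identified with the set of minimal primes $\pp_i$, hence with the set of facets $F_i$, which is exactly the vertex set of the dual graph of $\Delta$.

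Next I would compute $\pp_i + \pp_j$ for $i \ne j$. Since $\pp_i$ and $\pp_j$ are each generated by a subset of the variables, their sum is generated by the union of those subsets, i.e.\ $\pp_i + \pp_j = (x_k : k \notin F_i \cap F_j)$, again a monomial prime, generated by $n - |F_i \cap F_j|$ variables, so $\height(\pp_i + \pp_j) = n - |F_i \cap F_j|$. The algebraic adjacency condition $\height I_\Delta = \height(\pp_i + \pp_j) - 1$ then reads $n-d-1 = n - |F_i \cap F_j| - 1$, that is, $|F_i \cap F_j| = d$. As $F_i$ and $F_j$ are distinct $d$-faces (so $|F_i \cap F_j| \le d$ automatically), this is equivalent to saying that $F_i \cap F_j$ is a face of $\Delta$ of dimension $d-1$, which is precisely the combinatorial rule defining an edge in the dual graph of $\Delta$. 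Hence the edge sets coincide, and the two graphs are equal.

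I do not anticipate a genuine obstacle: this is essentially a bookkeeping lemma. The only two points that deserve to be stated carefully are (a) that the sum of two ideals each generated by a subset of the variables is generated by the union of those subsets — so that the height computation for $\pp_i + \pp_j$ is immediate — and (b) that purity of $\Delta$ is exactly what makes all the $\pp_i$ equidimensional, which is what is needed for $G(I_\Delta)$ to make sense in the height-unmixed setting to begin with.
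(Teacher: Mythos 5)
Your proof is correct and is essentially the paper's own argument, just written out in more detail: the paper's proof is a one-line chain of equivalences ($F$, $F'$ adjacent in $\Delta$ $\iff$ $P_F$ and $P_{F'}$ share all monomial generators but one $\iff$ $\height I_\Delta = \height(P_F+P_{F'})-1$), which is exactly your computation $\height(\pp_i+\pp_j)=n-|F_i\cap F_j|$ combined with the purity of $\Delta$. No issues.
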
 

\begin{proof} Let $F, F'$ be two facets of $\Delta$. 
$F$ and $F'$ are adjacent in $\Delta$ if and only if $P_F$ and $P_{F'}$ have the same monomial generators, except one; if and only if 
$\height(I_{\Delta}) = \height(P_F + P_{F'}) - 1$; if and only if $P_F$ and $P_{F'}$ are adjacent in $G(I_{\Delta})$. 
\end{proof}

A simplicial complex is called \emph{flag} if the Stanley-Reisner ideal of the complex is generated in degree two. A simplicial complex $\Delta$ is called \emph{Cohen--Macaulay} (over $\KK$) if $\KK[x_1, \ldots ,x_n] / I_{\Delta}$ is Cohen--Macaulay. A simplicial complex is called \emph{strongly connected} if its dual graph is connected. The \emph{star} of a face $F$ in a simplicial complex $C$ is the smallest subcomplex containing all faces of $C$ that contain $F$. A simplicial complex is called \emph{normal} if it is strongly connected, and so are the stars of all its faces. It is well known that Cohen--Macaulay complexes are normal. 

Let $\Delta$ be a $d$-dimensional simplicial complex, and let $F, G$ be two adjacent $d$-simplices. Let $v$ be the only vertex that belongs to $F$ but not to $G$. When we move from $F$ to $G$, we abandon the star of $v$ in $\Delta$.
A path in the dual graph of $\Delta$ is called \emph{non-revisiting} if it never reenters the star of a vertex previously abandoned. It is easy to see  that in a $d$-dimensional simplicial complex with $n$ vertices, any non-revisiting dual path can be at most $n-d-1$ steps long. These notions are interesting for our diameter problem because of the following recent result:
  
\begin{theorem}[{Adiprasito--Benedetti \cite{AB}}] \label{thm:monomialOriginal}
Let $\Delta$ be a flag normal simplicial complex of dimension $d$ and with $n$ vertices. Then any two facets of $\Delta$ can be connected via a non-revisiting path. In particular, the diameter of the dual graph of $\Delta$ is $\le n-d-1$. 
\end{theorem}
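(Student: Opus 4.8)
The \emph{in particular} clause follows at once from the remark made just above the statement: in a $d$-dimensional complex with $n$ vertices a non-revisiting dual path has at most $n-d-1$ steps, because each step permanently abandons the star of a fresh vertex while the initial facet already uses up $d+1$ vertices. So the substance is the first assertion: any two facets $\sigma,\tau$ of a flag normal complex $\Delta$ are joined by a non-revisiting dual path. I would argue by induction on $d=\dim\Delta$. For $d\le 1$ the statement is elementary: a pure $0$-complex has a complete dual graph, and for a pure $1$-complex any shortest dual path is already non-revisiting, since it induces a simple walk in the underlying graph.

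For the inductive step, the case $\sigma\cap\tau\ne\emptyset$ is the benign one. Fix $v\in\sigma\cap\tau$ and pass to $L=\operatorname{lk}_\Delta(v)$. The link of a vertex in a clique complex is again a clique complex (on the neighbourhood of $v$), hence flag; since the links of $L$ are links of $\Delta$ (and $L$ itself is one), $L$ is again normal; and $\dim L=d-1$. By the inductive hypothesis there is a non-revisiting dual path in $L$ from $\sigma\setminus v$ to $\tau\setminus v$; adjoining $v$ to each of its facets produces a dual path from $\sigma$ to $\tau$ lying entirely inside $\St_\Delta(v)$. Such a path never abandons $v$, and on all other vertices it is non-revisiting because the path in $L$ was.

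The heart of the matter is the case $\sigma\cap\tau=\emptyset$, where the path must at its very first step abandon some vertex of $\sigma$ forever; this is exactly where flagness is indispensable, since the non-revisiting property fails for general normal --- indeed even Gorenstein --- complexes generated in higher degree. The plan is to reduce to the previous case by reaching, from $\sigma$ and along a non-revisiting path, some facet meeting $\tau$; concatenation then finishes. For this one wants an ``attraction'' lemma: from an arbitrary facet there is a non-revisiting path to some facet containing a prescribed vertex $u$ (apply it with $u\in\tau$). Building such a path, one picks $v\in\sigma$, moves inside $\St_\Delta(v)$ by the link argument above until positioned to leave toward $u$, crosses the boundary of $\St_\Delta(v)$ by a \emph{flag pivot} $F\rightsquigarrow(F\setminus v)\cup w$ --- flagness being precisely what guarantees that $(F\setminus v)\cup w$ is again a facet as soon as $w$ is adjacent to every vertex of $F\setminus v$ --- and then recurses inside the deletion $\operatorname{del}_\Delta(v)$.

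The main obstacle is that $\operatorname{del}_\Delta(v)$, although still flag, is in general neither pure of dimension $d$ nor normal, so the inductive hypothesis does not apply to it verbatim. The recursion therefore has to be organized with care: for instance by a simultaneous induction on the number of vertices, with $v$ chosen so that the top-dimensional part of $\operatorname{del}_\Delta(v)$ remains flag and normal; or by operating directly on a shortest dual path $\sigma=F_0,\dots,F_m=\tau$, repeatedly excising a revisited segment by a flag pivot and checking that the length strictly drops while no new revisit of any other vertex's star is created. Making that monotonicity/termination bookkeeping go through is, I expect, the crux of the whole proof. (An alternative, metric route would be to equip $\Delta$ --- or a cell complex whose vertices are the facets of $\Delta$ and whose ``walls'' are the vertex stars --- with a non-positively curved metric, flagness entering through Gromov's link condition and normality through connectedness, and then to invoke that a geodesic in a $\CAT(0)$ cube complex meets each wall at most once; the hard step then becomes the existence of such a $\CAT(0)$ structure.)
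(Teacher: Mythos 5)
First, a point of reference: the paper does not prove this statement at all --- it is imported verbatim from Adiprasito--Benedetti \cite{AB}, with only the remark that ``the proof uses ideas of metric geometry applied to simplicial complexes.'' So your proposal can only be measured against the proof in that reference, not against anything in this paper. Your treatment of the easy parts is sound: the ``in particular'' clause does follow from the counting remark preceding the statement, and the inductive step for facets sharing a vertex $v$ is correct (the link of $v$ in a flag complex is the induced subcomplex on the neighbours of $v$, hence flag; normality and purity pass to links; and lifting a non-revisiting path from $\operatorname{lk}_\Delta(v)$ into $\St_\Delta(v)$ preserves the non-revisiting property vertex by vertex).

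The case $\sigma\cap\tau=\emptyset$, however, is where the theorem actually lives, and there your proposal contains a genuine gap --- one you partly acknowledge, but it is worth naming precisely. Two things fail. First, ``concatenation then finishes'' is false as stated: the non-revisiting property is not closed under concatenation, so even granting your attraction lemma, the path from $\sigma$ to a facet $F$ meeting $\tau$ followed by a non-revisiting path from $F$ to $\tau$ may re-enter stars abandoned on the first leg. Second, the recursion into $\operatorname{del}_\Delta(v)$ cannot be salvaged by ``choosing $v$ well'': deletions of normal flag complexes genuinely lose purity and normality, and no choice of $v$ repairs this in general. The proof in \cite{AB} avoids deletions entirely; it introduces \emph{combinatorial segments}, defined recursively through links only (a segment from a facet $F$ towards a target face descends into the link of a vertex of $F$ and is extended across the boundary of $\St(v)$ by a flag pivot), and the non-revisiting property is established by a monotonicity argument on these segments rather than by surgery on a shortest path. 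Your parenthetical metric alternative is in fact the guiding picture there --- the all-right spherical metric on a flag complex is $\CAT(1)$ by Gromov's lemma, and combinatorial segments shadow geodesics --- but the published argument is combinatorial, and the $\CAT(0)$-cube-complex formulation you suggest is not what is used. In short: correct skeleton, correct identification of where flagness enters, but the decisive construction is missing.
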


The proof uses ideas of metric geometry applied to simplicial complexes. Below we present an algebraic consequence.

\begin{lemma}[cf.~{\cite{HTT}}] \label{lem:HTT} Let $I$ be an ideal of $S=\KK[x_1, \ldots , x_n]$.  Let $\sqrt{I}$ be the radical of $I$. If $S/I$ is Cohen-Macaulay, $S/\sqrt{I}$ need not be Cohen--Macaulay. However, if $I$ is monomial and $S/I$ is Cohen-Macaulay, so is $S/\sqrt{I}$.
\end{lemma}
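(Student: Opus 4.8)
Lemma \ref{lem:HTT}: if $I$ is an arbitrary ideal with $S/I$ Cohen--Macaulay, then $S/\sqrt{I}$ need not be Cohen--Macaulay; but if $I$ is a \emph{monomial} ideal with $S/I$ Cohen--Macaulay, then $S/\sqrt{I}$ is Cohen--Macaulay as well.

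The plan is to treat the two halves separately. For the first (negative) half, I would simply exhibit a monomial-free example, or more honestly a non-monomial example, where taking the radical destroys Cohen--Macaulayness: the classical choice is a primary ideal $I$ whose quotient is Cohen--Macaulay (e.g.\ a suitable power of the maximal ideal of a cone, or the ideal defining a ``fat point'' or fat line configuration) while $\sqrt{I}$ cuts out a reducible or otherwise non-CM scheme. A convenient concrete instance: take the union of two planes meeting in a point in $\mathbb{A}^4$, whose radical ideal $(x,y)\cap(z,w)$ is notoriously not Cohen--Macaulay (its dual graph is disconnected, violating Hartshorne), and thicken it to a height-unmixed complete-intersection-type ideal $I$ with $S/I$ Cohen--Macaulay; then $S/\sqrt I$ is not CM. Even simpler, one can cite that the non-radical ideal in Example \ref{ex:unbounded} of this very paper, or any complete intersection $I$ as in Example \ref{ex:GorNot2conn}, has $\sqrt I$ non-CM while $S/I$ is CM. I would phrase this half as ``it is enough to point to Example~\ref{ex:...}''.

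For the second (positive) half --- the real content --- the plan is to invoke the combinatorial structure of monomial ideals. First reduce to the squarefree case: write $\sqrt{I} = I_\Delta$ for the simplicial complex $\Delta$ whose minimal nonfaces record the square-free parts of the generators of $I$, equivalently $\Delta$ is the complex with $\mathrm{Min}(I) \leftrightarrow \mathrm{Min}(I_\Delta)$ via the natural correspondence of monomial primes. I would then use polarization: the polarization $I^{\mathrm{pol}}$ of $I$ is a squarefree monomial ideal in a larger polynomial ring, obtained by a sequence of regular linear substitutions (adding new variables), so $S/I$ is Cohen--Macaulay $\iff$ $(S')/I^{\mathrm{pol}}$ is Cohen--Macaulay, and the associated simplicial complex $\Delta^{\mathrm{pol}}$ is obtained from $\Delta$ by repeated coning and ``doubling'' moves that preserve Cohen--Macaulayness in both directions. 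Chasing through, the claim becomes: if $\Delta^{\mathrm{pol}}$ is Cohen--Macaulay then $\Delta$ is Cohen--Macaulay, which follows because Cohen--Macaulayness of a simplicial complex is a topological (link-by-link homology) property stable under the relevant operations, or alternatively because the relevant statement is exactly \cite{HTT} (Herzog--Takayama--Terai or the reference cited), which I would quote.

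Alternatively --- and this is the route I'd actually write down if I want a self-contained argument --- I would argue via local cohomology and Hochster's formula. Cohen--Macaulayness of $S/I_\Delta$ is detected by the reduced homology of $\Delta$ and of all its links, by Reisner's criterion (via Hochster's formula for $\dim_\KK H^i_\mm(S/I_\Delta)_j$). The key point is that $\Delta$ and $\Delta^{\mathrm{pol}}$ have homotopy-equivalent links in all degrees --- polarization does not change the topology of the Stanley--Reisner complex up to the natural identifications --- so Reisner's criterion holds for one iff it holds for the other; combined with the fact that polarization preserves Cohen--Macaulayness of the quotient ring (standard, as polarizing variables form a regular sequence modulo the polarized ideal), this gives $S/\sqrt I$ CM $\iff S/I^{\mathrm{pol}}$ CM $\iff S/I$ CM.

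\textbf{The main obstacle.} The technical heart is the bookkeeping in polarization: one must check carefully that (i) the new ``polarizing'' variables do form a regular sequence on $S'/I^{\mathrm{pol}}$, so that Cohen--Macaulayness transfers between $S/I$ and $S'/I^{\mathrm{pol}}$, and (ii) the combinatorial complex $\Delta^{\mathrm{pol}}$ really does have the same link-homology as $\Delta$ (this is where coning enters: a cone over a complex is CM iff the complex is, and polarization of a single variable to a power is exactly a coning/suspension-type move). Both facts are standard but slightly fiddly; since the excerpt gives us the reference \cite{HTT}, the cleanest exposition is to state the reduction to squarefree ideals via polarization and then cite \cite{HTT} (or \cite[Chapter 1]{MiSt}) for the invariance of Cohen--Macaulayness, rather than reproving Reisner's criterion here.
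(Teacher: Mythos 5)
The paper offers no proof of this lemma at all: the statement is used as a black box, with the citation to \cite{HTT} built into the lemma header. So your fallback position --- reduce nothing, just quote Herzog--Takayama--Terai for the inequality $\operatorname{depth}(S/I)\le\operatorname{depth}(S/\sqrt{I})$ for monomial ideals --- is exactly what the authors do, and is the right call here.

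Your attempted self-contained argument, however, has two genuine problems. First, the polarization route does not do what you claim. Polarization relates $S/I$ to $S'/I^{\mathrm{pol}}$ via a regular sequence of \emph{differences} of variables; it says nothing directly about $S/\sqrt{I}$, and the asserted ``homotopy equivalence of links'' between $\Delta^{\mathrm{pol}}$ and $\Delta_{\sqrt{I}}$ is false. Take $I=(x,y)^2\subset\KK[x,y]$: then $\Delta^{\mathrm{pol}}$ is a contractible $1$-dimensional path on the four vertices $x_1,x_2,y_1,y_2$, while $\Delta_{\sqrt{I}}$ is the complex consisting of the empty face only; these are not related by coning, and $\Delta_{\sqrt{I}}$ is not even an induced subcomplex of $\Delta^{\mathrm{pol}}$ in the relevant sense. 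The passage from $I^{\mathrm{pol}}$ (or $I$) to $\sqrt{I}$ is precisely where the content of \cite{HTT} lives (a degreewise comparison of local cohomology via Takayama's generalization of Hochster's formula), and it cannot be replaced by the regular-sequence argument. Second, your ``convenient concrete instance'' for the negative half cannot exist: if $S/I$ were Cohen--Macaulay with $\sqrt{I}=(x,y)\cap(z,w)$, then $G(I)$ would be the disconnected two-vertex graph, contradicting Hartshorne's connectedness theorem --- the very result this paper is refining. Likewise Example \ref{ex:GorNot2conn} is radical ($\sqrt{I}=I$), so it is not a counterexample either. Valid witnesses available in the paper are the complete intersection $I_1=(a,c,f,g)$ of Example \ref{ex:GorNot2conna}, whose radical is the non-Cohen--Macaulay prime $\pp$, or (in positive characteristic) Hartshorne's set-theoretic complete intersection structure on the projected quartic from Example \ref{ex:GorNot2conn}.
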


\begin{corollary} \label{cor:monomial}
Let $I$ be a monomial ideal such that $S/I$ is Cohen-Macaulay. If $I$ is generated in degree $2$ (or more generally, if each minimal generator has a support of~$\le 2$ variables), then $\diam G(I) \le \height I$. 
\end{corollary}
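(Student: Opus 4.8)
The plan is to reduce the statement, via Lemma~\ref{lem:HTT}, to the purely combinatorial bound of Theorem~\ref{thm:monomialOriginal}.

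First I would observe that both $G(I)$ and $\height I$ only depend on the set of minimal primes of $I$, and $\Min(I)=\Min(\sqrt I)$; hence $G(I)=G(\sqrt I)$ and $\height I=\height\sqrt I$, so it suffices to prove $\diam G(\sqrt I)\le\height\sqrt I$. By Lemma~\ref{lem:HTT}, since $I$ is monomial and $S/I$ is Cohen--Macaulay, $S/\sqrt I$ is Cohen--Macaulay as well; write $\sqrt I=I_\Delta$ for a simplicial complex $\Delta$. If each minimal generator of $I$ involves at most two variables, then $\sqrt I$ is generated by the radicals of those generators (for a monomial ideal $(m_1,\dots,m_k)$ one has $\sqrt{(m_1,\dots,m_k)}=(\sqrt{m_1},\dots,\sqrt{m_k})$), and each $\sqrt{m_j}$ is a squarefree monomial in $\le 2$ variables; so $I_\Delta$ is generated in degree $\le 2$.

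Next I would dispose of the possible degree-one generators of $I_\Delta$. Let $V\subseteq\{1,\dots,n\}$ be the set of vertices of $\Delta$, i.e.\ the indices $i$ with $x_i\notin I_\Delta$, and regard $\Delta$ as a complex $\Delta'$ on the ground set $V$, with Stanley--Reisner ideal $I_{\Delta'}\subseteq S':=\KK[x_i:i\in V]$. Killing the variables outside $V$ (which already vanish modulo $I_\Delta$) gives a ring isomorphism $S/I_\Delta\cong S'/I_{\Delta'}$, so $S'/I_{\Delta'}$ is Cohen--Macaulay; it also identifies $\Min(I_\Delta)$ with $\Min(I_{\Delta'})$ and hence $G(I_\Delta)$ with $G(I_{\Delta'})$, and it lowers the height by exactly $|\{1,\dots,n\}\setminus V|$, so $\height_{S'}I_{\Delta'}\le\height_S I_\Delta=\height I$. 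By construction $I_{\Delta'}$ is generated in degree exactly $2$, i.e.\ $\Delta'$ is flag.

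Finally, since $S'/I_{\Delta'}$ is Cohen--Macaulay the complex $\Delta'$ is Cohen--Macaulay, hence pure --- say of dimension $d$ --- and therefore normal (Cohen--Macaulay complexes are normal). Applying Theorem~\ref{thm:monomialOriginal} to the flag normal complex $\Delta'$ on $|V|$ vertices, its dual graph has diameter at most $|V|-d-1$; and $|V|-d-1=\height_{S'}I_{\Delta'}$, because each facet of $\Delta'$ has $d+1$ vertices and the associated minimal prime is generated by the other $|V|-d-1$ variables. Combining this with Lemma~\ref{lem:combcomm} (which identifies the combinatorial dual graph of $\Delta'$ with $G(I_{\Delta'})$) and the identifications above yields
\[
\diam G(I)=\diam G(I_{\Delta'})\le|V|-d-1=\height_{S'}I_{\Delta'}\le\height I.
\]
I expect the only delicate point to be this final bookkeeping --- in particular, checking that the ``$n-d-1$'' coming out of Theorem~\ref{thm:monomialOriginal} stays compatible with $\height I$ even when $\sqrt I$ acquires degree-one generators (as it does already for $I=(x^2)$); all the real content sits in Lemma~\ref{lem:HTT} and Theorem~\ref{thm:monomialOriginal}.
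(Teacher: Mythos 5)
Your proposal is correct and follows essentially the same route as the paper: pass to $\sqrt I$ (which preserves $G$ and the height), invoke Lemma~\ref{lem:HTT} to keep Cohen--Macaulayness, and then apply Theorem~\ref{thm:monomialOriginal} together with Lemma~\ref{lem:combcomm} to the resulting flag normal complex. Your extra step of restricting to the actual vertex set $V$ to handle degree-one generators of $\sqrt I$ (as for $I=(x^2)$) is a point the paper's proof glosses over, and your bookkeeping $|V|-d-1=\height_{S'}I_{\Delta'}\le\height I$ settles it correctly.
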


\begin{proof} 
Clearly, also $\sqrt{I}$ is generated in degree at most $2$; moreover, $\height \sqrt{I}=\height I$ and $G(\sqrt{I})=G(I)$. Furthermore, $S/\sqrt{I}$ is Cohen-Macaulay by Lemma \ref{lem:HTT}. Since $\sqrt{I}$ is radical and monomial, it is the Stanley--Reisner ring of some simplicial complex $\Delta$. By the assumptions, $\Delta$ is flag and Cohen-Macaulay, so in particular normal. 
Moreover, if $\Delta$ has dimension $d$ and $n$ vertices, by Theorem \ref{thm:monomialOriginal} the dual graph of $\Delta$ has diameter $\le n-d-1$. 
Since $\height \sqrt{I} = n - d - 1$, via Lemma \ref{lem:combcomm} we conclude  \[\diam G(I) = \diam G(\sqrt{I}) \le n-d-1 = \height \sqrt{I} = \height I. \qedhere\] 
\end{proof}

\subsection{Reduction to quadrics} \label{sec:ReductionToQuadrics}
Here we show that ideals generated in degree 2 play a special role in understanding dual graphs of Cohen-Macaulay projective algebraic objects. In fact, there is a classical algebraic procedure, named after Giuseppe Veronese, that allows us to associate any Cohen--Macaulay algebra with a Cohen--Macaulay \emph{quadratic} algebra with the same dual graph.

Let $k, d, n$ be positive integers. Let $I$ be an ideal of $S=\KK[x_1,\ldots ,x_n]$, generated in degree $\leq k$. Set $R=S/I$. Let $u_1, \ldots, u_N$
be a list of all monomials in $S$ of degree $d$, with $N=\binom{n+d-1}{d}$. Consider the \emph{$d$-th Veronese rings}
\[S^{(d)}=\bigoplus_{i\geq 0}S_{di}\subset S \mbox{ \ \ \ and \ \ \ } R^{(d)}=\bigoplus_{i\geq 0}R_{di}\subset R.\]
If $T$ is the polynomial ring $\KK[y_1, \ldots, y_N]$, we have natural surjections
\[\xymatrix{
T \ar@{->>}[r]^{\phi_d} & S^{(d)} \ar@{->>}[r]^{\psi_d} & R^{(d)}.
}\]
(Here $\phi_d$ is the map induced by $y_i \mapsto u_i$, and $\psi_d$ is the restriction to $S^{(d)}$ of the projection from $S$ to $S/I$.) If we set $\pi_d=\psi_d\circ\phi_d$, we can define
\[V_d(I)=\Ker \pi_d=\Ker \phi_d + \phi_d^{-1}(I\cap S^{(d)}).\]
Since $\Ker \phi_d$ is generated by quadrics, $V_d(I)$ is generated in degree $\le \max\{2,\lceil k/d \rceil\}$.
Furthermore, we have that the graphs $G(I)$ and $G(V_d(I))$ are the same, since
\[\Proj(R)\cong \Proj(R^{(d)})\]
as projective schemes. Finally, since $R^{(d)}$ is a direct summand of $R$ and $R$ is integral over $R^{(d)}$, then $R^{(d)}$ is Cohen-Macaulay whenever $R$ is, by a theorem of Eagon and Hochster \cite[Theorem 6.4.5]{BrunsHerzog}. 

This allows us to show how Conjecture  \ref{con:main} has implications for the diameter of the dual graphs of all ideals, not only of those generated in degree 2. 

\begin{proposition} \label{prop:RedToQuadrics}
Suppose Conjecture \ref{con:main} is true. Let $I \subset S$ be an ideal generated in degree $\le k$. If $S/I$ is Cohen--Macaulay, then \[\diam G(I) \leq \frac{(n+\lfloor (k-1)/4\rfloor)^{\lceil k/2\rceil}}{\lceil k/2\rceil !}.\]
\end{proposition}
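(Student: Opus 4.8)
The plan is to deduce the statement from Conjecture~\ref{con:main} by means of the Veronese reduction set up in Section~\ref{sec:ReductionToQuadrics}. Set $d=\lceil k/2\rceil$, the least integer with $\lceil k/d\rceil\le 2$. Feeding $I$ into the Veronese construction with this $d$ produces an ideal $V_d(I)\subseteq T=\KK[y_1,\dots,y_N]$, $N=\binom{n+d-1}{d}$, that is generated in degree $\le\max\{2,\lceil k/d\rceil\}=2$, whose quotient $T/V_d(I)\cong R^{(d)}$ is Cohen--Macaulay (because $R=S/I$ is, by Eagon--Hochster), and which satisfies $G(I)=G(V_d(I))$ (because $\Proj R\cong\Proj R^{(d)}$). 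Granting Conjecture~\ref{con:main}, the quadratic Cohen--Macaulay ideal $V_d(I)$ is Hirsch, so
\[
\diam G(I)=\diam G\bigl(V_d(I)\bigr)\;\le\;\height V_d(I)\;=\;N-\dim R^{(d)}\;=\;\binom{n+d-1}{d}-\dim R .
\]

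It remains to bound the right-hand side in terms of $n$ and $k$ alone. If $\dim R\le 1$, then $G(I)$ is a single vertex or a complete graph --- two distinct minimal primes of height $n-1$ always sum to an $\mm$-primary ideal --- so $\diam G(I)\le 1$, which is well below the claimed bound. If instead $\dim R\ge 1$, then $\height V_d(I)\le\binom{n+d-1}{d}-1<\binom{n+d-1}{d}=\tfrac{1}{d!}\prod_{i=0}^{d-1}(n+i)$, and the AM--GM inequality applied to $n,n+1,\dots,n+d-1$ gives
\[
\prod_{i=0}^{d-1}(n+i)\;\le\;\Bigl(n+\tfrac{d-1}{2}\Bigr)^{d}.
\]
Since $\lfloor(d-1)/2\rfloor=\lfloor(k-1)/4\rfloor$ for $d=\lceil k/2\rceil$, plugging this back in --- using the strict inequality above, and, where $\height I$ is large, the auxiliary multiplicity bound $|\Min(I)|\le e(S/I)\le k^{\height I}$ of Lemma~\ref{lem:regularity}(i) --- is meant to yield $\diam G(I)\le\bigl(n+\lfloor(k-1)/4\rfloor\bigr)^{\lceil k/2\rceil}/\lceil k/2\rceil!$.

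The conceptual input --- the Veronese reduction together with Conjecture~\ref{con:main} --- is already available, so most of the work is bookkeeping. The step I expect to be the real obstacle is the closing numerical estimate: when $d=\lceil k/2\rceil$ is even, the floor in the base of the power sits \emph{below} what a bare AM--GM bound delivers, so one must genuinely exploit that $V_d(I)$ lives in $N=\binom{n+d-1}{d}$ variables while $R^{(d)}$ has dimension $\ge 1$ (hence $\height V_d(I)<N$), and treat the large-codimension range of $\height I$ by the multiplicity bound rather than by the Veronese inequality. Reconciling the two floor functions cleanly is the only subtle point.
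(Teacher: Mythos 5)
Your reduction is exactly the paper's: set $d=\lceil k/2\rceil$, pass to $V_d(I)\subset T=\KK[y_1,\dots,y_N]$, observe that it is quadratic, that $T/V_d(I)\cong R^{(d)}$ is Cohen--Macaulay by Eagon--Hochster, and that $G(V_d(I))=G(I)$, then invoke Conjecture \ref{con:main} and bound $\height V_d(I)$ by the number of variables $N=\binom{n+d-1}{d}$. The paper closes with the single line $\binom{n+d-1}{d}=\frac{(n+d-1)\cdots n}{d!}\le\frac{(n+e)^{d}}{d!}$ with $e=\lfloor(k-1)/4\rfloor$, i.e.\ precisely the AM--GM step you refuse to take for granted. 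So up to that last estimate you have reproduced the intended argument, and your instinct about where the danger sits is correct.

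That final step is a genuine gap, and it is a gap in the paper's own proof, not merely in your write-up. Since $e=\lfloor(d-1)/2\rfloor$, for odd $d$ AM--GM gives exactly $\prod_{i=0}^{d-1}(n+i)\le\bigl(n+\tfrac{d-1}{2}\bigr)^{d}=(n+e)^{d}$ and all is well; but for even $d$ the asserted inequality is false: for $k=3$ one has $d=2$, $e=0$, and $\binom{n+1}{2}=\frac{n(n+1)}{2}>\frac{n^{2}}{2}$. Your proposed patches do not close this. Using $\height V_d(I)=N-\dim R$ improves $N$ only by $\dim R=n-\height I$, which for $k=3$ yields $\frac{n(n-1)}{2}+\height I$; this exceeds $\frac{n^{2}}{2}$ as soon as $\height I>n/2$. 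In that same range the multiplicity bound of Lemma \ref{lem:regularity}(i) gives only $\diam G(I)\le k^{\height I}-1\ge 3^{n/2}-1$, which is far larger than $n^{2}/2$. So the two regimes you propose do not cover $n/2<\height I\le n$. The honest output of the argument is $\diam G(I)\le\binom{n+d-1}{d}\le\frac{\left(n+\lceil(d-1)/2\rceil\right)^{d}}{d!}$; in other words the floor in the statement should be a ceiling (or the bound should simply be left as the binomial coefficient). With that corrected exponent base, your proof is complete and coincides with the paper's.
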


\begin{proof}
With the notation above, set  $d=\lceil k/2\rceil$ and $e=\lfloor (k-1)/4\rfloor$. Then $V_d(I)$ is quadratic and $G(I)=G(V_d(I))$. Furthermore, $T/V_d(I)$ is Cohen--Macaulay, because $S/I$ is. Assuming Conjecture \ref{con:main}, we get 
\[\diam G(I)= \diam G(V_d(I)) \leq N=\binom{n+d-1}{d} = \frac{(n+d-1) \cdots n}{d!} \le \frac{(n+e)^d}{d!}. \qedhere\]
\end{proof}

\subsection{Reduction to projective curves}
Here we show that under some extra technical assumption (satisfied by subspace arrangements, for example) Conjecture \ref{con:main} can be further reduced to the case where $I$ defines a projective curve. The geometric intuition is to intersect our algebraic object in $\PP^{n}$ with a hyperplane in general position, so that the intersection, viewed as an algebraic object in $\PP^{n-1}$, has the same dual graph as the starting object. 

Throughout this section, we require $\KK$ to be an infinite field (not necessarily algebraically closed).

\begin{lemma}\label{lem:redtodim2}
Let $I\subset S$ be a radical homogeneous ideal such that $S/I$ is a $d$-dimensional Cohen--Macaulay ring, with $d\geq 3$. If $S/\pp$ is Cohen--Macaulay for all $\pp\in\Min(I)$, then there exists a  radical homogeneous ideal $I'\subset S'=\KK[x_1,\ldots ,x_{n-1}]$ such that 
\begin{compactenum}[\rm (i)]
\item $S'/I'$ has dimension $d-1$, 
\item for each $i$ and $j$ the graded Betti number $\beta_{i,j}(S'/I')$ equals $\beta_{i,j}(S/I)$, and 
\item $G(I')=G(I)$. 
\end{compactenum} 
Furthermore, there is a bijection $\phi:\Min(I)\rightarrow \Min(I')$ such that for each $i$ and $j$, $\beta_{i,j}(S'/\phi(\pp))=\beta_{i,j}(S/\pp)$ for all $\pp\in\Min(I)$.
\end{lemma}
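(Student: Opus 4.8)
The idea is the classical "generic hyperplane section" argument, made precise using the hypothesis that $S/I$ and all the $S/\pp$ are Cohen--Macaulay. Since $\KK$ is infinite, a generic linear form $\ell \in S_1$ will be a nonzerodivisor on $S/I$ and simultaneously on each $S/\pp$ with $\pp \in \Min(I)$ (a finite intersection of nonempty Zariski-open conditions). After a linear change of coordinates we may assume $\ell = x_n$, so that $S/(I,x_n) \cong S'/I'$ where $S' = \KK[x_1,\dots,x_{n-1}]$ and $I'$ is the image of $I$. Because $x_n$ is $S/I$-regular, $\dim S'/I' = d-1$, which is (i); and because $x_n$ is regular on $S/I$, tensoring the minimal free resolution of $S/I$ with $S/(x_n)$ stays exact and minimal, giving $\beta_{i,j}(S'/I') = \beta_{i,j}(S/I)$, which is (ii). The same remark applied to each $S/\pp$ shows that $S'/(\pp, x_n)$ is Cohen--Macaulay of dimension $\dim S/\pp - 1 \geq d - 1 \geq 2$ with the same graded Betti numbers; set $\phi(\pp) := \sqrt{(\pp,x_n)}$, or rather one must first check $(\pp,x_n)$ is prime.

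First I would nail down the genericity statement: the locus of $\ell \in \PP(S_1)$ failing to be a nonzerodivisor on a given finitely generated graded module $M$ of positive depth is contained in the union of the (finitely many) projectivized associated primes of $M$ restricted to degree $1$, hence is a proper closed subset; intersecting over $M \in \{S/I\} \cup \{S/\pp : \pp \in \Min(I)\}$ keeps it proper, and $\KK$ infinite guarantees a rational point outside it. Then I would record the standard homological consequence: if $x$ is $M$-regular and $F_\bullet \to M$ is a minimal graded free resolution, then $F_\bullet \otimes_S S/(x) \to M/xM$ is again a minimal graded free resolution, so Betti numbers are preserved; this is where (ii) and the Betti-number part of the "furthermore" come from, and it also transfers Cohen--Macaulayness (depth and dimension both drop by one).

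The main obstacle, and the place where $d \geq 3$ is used, is (iii) together with primality of $(\pp, x_n)$ — i.e. showing that the correspondence $\pp \mapsto (\pp,x_n)$ really gives a bijection $\Min(I) \to \Min(I')$ that preserves adjacency in the dual graph. For primality: $S/\pp$ is Cohen--Macaulay of dimension $\geq 3$, so $S/(\pp,x_n)$ has dimension $\geq 2 \geq 1$ and is Cohen--Macaulay, hence has no embedded primes; to conclude it is prime (equivalently that the generic hyperplane section of an irreducible projective variety of dimension $\geq 2$ is irreducible) I would invoke Bertini's theorem in the form that applies to the cone over $V(\pp) \subset \PP^{n-1}$ — here the dimension bound is essential, since a generic hyperplane section of a curve is just a reduced collection of points. (Over a non-perfect field one should be slightly careful; the statement still goes through because we only need geometric irreducibility to fail on a proper closed set, and irreducibility of $V(\pp)$ over $\KK$ is inherited by the generic section in dimension $\geq 2$ — alternatively one can pass to $\overline{\KK}$, run the argument, and descend, since Betti numbers and the dual graph are unchanged by the flat base field extension.) Once each $(\pp,x_n)$ is prime of height equal to $\height \pp$, the decomposition $I = \bigcap \pp$ yields $I' = (I,x_n)/(x_n) = \bigcap (\pp,x_n)/(x_n)$ after checking this intersection is still irredundant — which follows because the heights are unchanged — so $\phi$ is the claimed bijection on minimal primes; finally $\height(\pp_i + \pp_j) = \height((\pp_i,x_n)+(\pp_j,x_n)) + 1$ for a generic $x_n$ (the hyperplane section also behaves generically on the finitely many pairwise sums $\pp_i + \pp_j$, which one adds to the list of modules constraining the choice of $\ell$ at the start), and since $\height I' = \height I - 1$ as well, the defining numerical condition for an edge is preserved, giving $G(I') = G(I)$.
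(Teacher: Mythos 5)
Your overall strategy is the same as the paper's: cut with a general linear form $x_n$, use that $x_n$ is regular on $S/I$ and on each $S/\pp$ to preserve graded Betti numbers, and invoke Bertini (this is where $d\geq 3$ enters for primality, since each $\Proj(S/\pp_i)$ must have dimension $\geq 2$ for its general hyperplane section to stay irreducible). That part is fine. But there is a genuine gap in your verification of (iii), precisely at the point where $d\geq 3$ is needed a second time. You claim that the heights of the pairwise sums $\pp_i+\pp_j$ ``behave generically'' under the section, and propose to add the modules $S/(\pp_i+\pp_j)$ to the list constraining the choice of $\ell$. This cannot work when $\pp_i+\pp_j$ is $\mm$-primary, i.e.\ when two components meet only at the cone point: then $S/(\pp_i+\pp_j)$ is Artinian, every linear form is a zerodivisor on it, and the open condition you want to intersect over is empty. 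For such a pair the image of $\pp_i+\pp_j$ in $S'$ is the irrelevant ideal of $S'$, of height $n-1$; its codimension genuinely drops relative to the ambient ring. The paper handles this with the explicit case distinction $\height(\pp_i'+\pp_j')=\height(\pp_i+\pp_j)$ if $\height(\pp_i+\pp_j)<n$, and $=\height(\pp_i+\pp_j)-1$ otherwise, and then observes that $n-1\geq \height I'+2$ exactly because $d\geq 3$, so no new edge is created between disjoint components. Your argument never engages with this case, and it is the real reason the reduction must stop at $\dim(S/I)=2$ (for $d=2$, disjoint lines would become adjacent after the section).

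A related bookkeeping error: you assert $\height I'=\height I-1$ and $\height(\pp_i+\pp_j)=\height\bigl((\pp_i,x_n)+(\pp_j,x_n)\bigr)+1$. Both are off. Since $\dim S'=n-1$ and $\dim S'/I'=d-1$ (your own item (i)), one has $\height_{S'}I'=n-d=\height_S I$: a general hyperplane section preserves codimension, both for $I$ and for the non-$\mm$-primary sums $\pp_i+\pp_j$. Your two off-by-one statements happen to cancel in the edge criterion $\height(\pp_i'+\pp_j')=\height I'+1$, which is why you still ``reach'' the conclusion, but the individual claims are false and they mask exactly the exceptional ($\mm$-primary) case discussed above. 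Correct the heights, split into the two cases as in the paper, and use $d\geq 3$ to dispose of the $\mm$-primary case, and your proof is complete.
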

\begin{proof}
Set $\Min(I)=\{\pp_1,\ldots ,\pp_s\}$, $R=S/I$ and $R_i=S/\pp_i$. By making a change of coordinates we can assume that $x_n\in S$ is general, so we have that $A=R/(x_n)$ and $A_i=R_i/(x_n)$ are $(d-1)$-dimensional Cohen--Macaulay rings. Since $d-1\geq 2$, we have $H_{\mm}^0(A_i)=0$, where $H^0_{\mm}$ denotes the $0$-th local cohomology with support in the irrelevant ideal $\mm\subset S$. Therefore, Bertini's theorem tells us that $A_i$ is a domain. This means that 
$\pp'_i= \frac{\pp_i+(x_n)}{(x_n)}$ is a prime ideal contained in $S'=S/(x_n)$. By setting $I'= \frac{I+(x_n)}{(x_n)}\subset S'$ we obtain 
\[\Min(I')=\{\pp'_1,\ldots ,\pp'_s\}.\]
We have that $\height(\pp'_i)=n-d$ and
\begin{align*}
\height(\pp'_i+\pp'_j) =
\begin{cases}
\height(\pp_i+\pp_j) & \text {if } \height(\pp_i+\pp_j)<n,\\
\height(\pp_i+\pp_j)-1 & \text {otherwise}
\end{cases}
\end{align*}
Since $d\geq 3$, we conclude that $G(I')=G(I)$.
\end{proof}

\begin{proposition} \label{prop:RedToCurves}
Let $I\subset S$ be a quadratic ideal defining a subspace arrangement. Assume that $S/I$ is a $d$-dimensional Cohen--Macaulay ring, with $d\geq 3$. Then there exists a quadratic ideal $I'\subset S'=\KK[x_1,\ldots ,x_{n-1}]$, defining another subspace arrangement, such that $S'/I'$ is a $(d-1)$-dimensional Cohen--Macaulay ring and $G(I')=G(I)$.
\end{proposition}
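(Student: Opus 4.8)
The plan is to deduce Proposition \ref{prop:RedToCurves} from Lemma \ref{lem:redtodim2} by checking that all the hypotheses of the lemma are satisfied in our setting, and that the resulting ideal $I'$ has the extra structural features we want, namely that it is quadratic and defines a subspace arrangement.

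First I would verify the input hypotheses of Lemma \ref{lem:redtodim2}. Since $I$ defines a subspace arrangement, it is radical and homogeneous, and each $\pp\in\Min(I)$ is generated by linear forms, so $S/\pp$ is a polynomial ring and in particular Cohen--Macaulay; together with the assumption that $S/I$ is $d$-dimensional Cohen--Macaulay with $d\geq 3$, this puts us exactly in the situation of the lemma. Applying it (after the general change of coordinates) produces a radical homogeneous ideal $I'=\frac{I+(x_n)}{(x_n)}\subset S'$ with $\dim S'/I'=d-1$, with the same graded Betti numbers as $S/I$, and with $G(I')=G(I)$; moreover each minimal prime $\pp'_i=\frac{\pp_i+(x_n)}{(x_n)}$ of $I'$ is prime.

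The two points that still need to be argued are: (a) $S'/I'$ is Cohen--Macaulay, and (b) $I'$ is quadratic and defines a subspace arrangement. For (a): the proof of the lemma already shows that $A=R/(x_n)$ is $(d-1)$-dimensional Cohen--Macaulay when $x_n$ is general, and $A=S'/I'$; alternatively, $x_n$ is a nonzerodivisor on the Cohen--Macaulay ring $S/I$ (being a general linear form, hence a parameter), so $S/I\otimes S/(x_n)=S'/I'$ is Cohen--Macaulay of dimension $d-1$. For (b): $I'$ is quadratic because $\beta_{1,j}(S'/I')=\beta_{1,j}(S/I)$ for all $j$ by item (ii) of the lemma, and $S/I$ quadratic means $\beta_{1,j}(S/I)=0$ for $j\neq 2$; hence the minimal generators of $I'$ all have degree $2$. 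Finally, each $\pp'_i$ is a prime ideal, and it is the image in $S'$ of the ideal $\pp_i$ generated by linear forms, so $\pp'_i$ is generated by linear forms as well; therefore $I'=\bigcap_{i}\pp'_i$ is an intersection of ideals generated by linear forms, i.e.\ $I'$ defines a subspace arrangement.

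I do not expect any serious obstacle: the real work is done inside Lemma \ref{lem:redtodim2} (in particular the Bertini-type argument guaranteeing that the hyperplane section of each linear space stays irreducible, which is automatic here since linear spaces are irreducible and $d-1\geq 2$ forces $H^0_\mm(A_i)=0$). The only mild subtlety worth spelling out is that ``quadratic'' passes to $I'$ precisely through the equality of graded Betti numbers $\beta_{1,j}$, rather than by a naive generator count, and that primality of the $\pp'_i$ together with the fact that they are generated by (images of) linear forms is what upgrades ``radical ideal with the same dual graph'' to ``defines a subspace arrangement''.
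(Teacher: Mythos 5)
Your proposal is correct and follows essentially the same route as the paper: apply Lemma \ref{lem:redtodim2} (noting that minimal primes of a subspace arrangement are generated by linear forms, hence Cohen--Macaulay), then observe that $I'$ is radical with linear minimal primes (so it defines a subspace arrangement) and quadratic via the preservation of graded Betti numbers. Your extra care in spelling out why $S'/I'$ is Cohen--Macaulay and why ``quadratic'' descends through $\beta_{1,j}$ only makes explicit what the paper leaves implicit.
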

\begin{proof}
A minimal prime $\pp$ of $I$ is generated by linear forms, so clearly $S/\pp$ is Cohen--Macaulay. Lemma \ref{lem:redtodim2} guarantees the existence of the ideal $I'$. To see that $I'$ defines a subspace arrangement, it is enough to prove that $I'$ is radical. This follows immediately from Bertini's theorem and the fact that $S/I$ is Cohen--Macaulay of dimension $>1$.
\end{proof}

The results above allow us to reduce Conjecture \ref{con:main} to the 2-dimensional case.

\begin{corollary}
If Conjecture \ref{con:main} holds when $\dim(S/I)=2$ (that is, when the scheme $\Proj(S/I)$ is a curve), then it also holds for all quadratic ideals $I$ such that, for all $\pp\in\Min(I)$, $S/\pp$ is Cohen--Macaulay. 
\end{corollary}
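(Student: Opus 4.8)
The plan is to induct on $d = \dim(S/I)$, peeling off one generic hyperplane at a time by means of (the proof of) Lemma~\ref{lem:redtodim2}, until the dimension drops to $2$ and the assumed case of Conjecture~\ref{con:main} can be invoked. For this to work one must check that a single generic hyperplane section preserves all the relevant data at once: the ideal stays quadratic, $S/I$ and the $S/\pp$ stay Cohen--Macaulay, the dual graph is unchanged, and --- the point that makes the Hirsch inequality transferable --- the height is unchanged.

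First I would clear the base cases. The case $d = 2$ is the hypothesis. The cases $d \le 1$ are automatically Hirsch: if $d = 0$ then $\mm$ is the unique minimal prime, so $G(I)$ is a single vertex; if $d = 1$ then any two distinct minimal primes $\pp \ne \qq$ have height $n - 1$, and since a height-$(n-1)$ prime minimal over $\pp + \qq$ would have to equal both $\pp$ and $\qq$, one gets $\height(\pp + \qq) = n = \height I + 1$, so $G(I)$ is a complete graph and $\diam G(I) \le 1 \le \height I$.

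For the inductive step assume $d \ge 3$. After a general change of coordinates, let $x_n$ be a generic linear form, hence a nonzerodivisor on $S/I$ and on each $S/\pp$, $\pp \in \Min(I)$; put $S' = \KK[x_1, \dots, x_{n-1}]$ and $I' = (I + (x_n))/(x_n)$. Running the argument of Lemma~\ref{lem:redtodim2}: $S'/I'$ is Cohen--Macaulay of dimension $d - 1$; each $S'/\pp'$ with $\pp' = (\pp + (x_n))/(x_n)$ is a Cohen--Macaulay domain by Bertini's theorem (legitimate because $(S/\pp)/(x_n)$ is Cohen--Macaulay of dimension $d - 1 \ge 2$); the map $\pp \mapsto \pp'$ is a bijection $\Min(I) \to \Min(I')$; $G(I') = G(I)$; $I'$ is generated in degree $2$, being generated by the images of the quadratic generators of $I$; and $\height I' = (n - 1) - (d - 1) = n - d = \height I$. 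Thus $I'$ satisfies all the hypotheses in dimension $d - 1$, so by induction $\diam G(I') \le \height I'$, and therefore $\diam G(I) \le \height I$.

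I do not expect a real obstacle, since all the substance lives in Lemma~\ref{lem:redtodim2}; the one subtlety is that that lemma is stated for radical ideals, whereas the statement at hand allows any quadratic $I$. This causes no trouble: the only place radicality enters the proof of Lemma~\ref{lem:redtodim2} is in concluding that $I'$ is radical, which we never use; every other assertion there rests only on the Cohen--Macaulayness of $S/I$ and of the $S/\pp$. Keeping that distinction in mind --- rather than quoting the lemma as a black box --- is essentially the whole of the work.
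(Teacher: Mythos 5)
Your argument is correct and is exactly the reduction the paper intends: the corollary is stated without explicit proof as an immediate consequence of iterating Lemma~\ref{lem:redtodim2} (one generic hyperplane section at a time) down to dimension $2$, with the trivial cases $d\le 1$ handled as you do. Your observation that the radicality hypothesis of that lemma is used only to conclude that $I'$ is radical --- a conclusion not needed here, since every other step rests on the Cohen--Macaulayness of $S/I$ and of the $S/\pp$ --- is the right way to resolve the mismatch between the lemma's statement and the corollary's possibly non-radical quadratic $I$.
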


\begin{corollary}
If Conjecture \ref{con:main} holds when the scheme $\Proj(S/I)$ is a union of lines, then it holds whenever $I$ is quadratic and defines a subspace arrangement.
\end{corollary}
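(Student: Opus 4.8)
The plan is to induct on the Krull dimension $d=\dim(S/I)$, peeling off one variable at a time via Proposition~\ref{prop:RedToCurves} until we land in the case where Conjecture~\ref{con:main} is assumed, namely when $\Proj(S/I)$ is a union of lines. Before starting the induction I would record that we may assume $\KK$ infinite, as Proposition~\ref{prop:RedToCurves} requires: passing from $\KK$ to an infinite field extension $\KK'$ leaves $I$ quadratic, keeps $S/I$ Cohen--Macaulay, does not change $\height I$, and does not change $G(I)$, because each linear prime $\pp_i$ of $I$ stays prime of the same height in $\KK'[x_1,\dots,x_n]$ and the edges of $G(I)$ are recorded by the heights of the sums $\pp_i+\pp_j$ of such linear ideals, which are likewise unchanged.

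Now take $I\subset S=\KK[x_1,\dots,x_n]$ quadratic, defining a subspace arrangement, with $S/I$ Cohen--Macaulay, and set $d=\dim(S/I)$; we want $\diam G(I)\le\height I=n-d$. Since $S/I$ is Cohen--Macaulay, $I$ is height-unmixed, so $S/I$ is equidimensional and every component of $\Proj(S/I)$ is a linear subspace of projective dimension $d-1$. If $d\le 1$ the statement is immediate: $G(I)$ is empty, a single vertex, or (when $d=1$ with several points, any two of which meet in codimension one since distinct points of projective space are disjoint) a complete graph, so $\diam G(I)\le 1\le n-1=\height I$. If $d=2$ every component is a line, $\Proj(S/I)$ is a union of lines, and the hypothesis of the corollary gives $\diam G(I)\le\height I$ directly. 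If $d\ge 3$, Proposition~\ref{prop:RedToCurves} produces a quadratic ideal $I'\subset S'=\KK[x_1,\dots,x_{n-1}]$ defining a subspace arrangement, with $S'/I'$ Cohen--Macaulay of dimension $d-1$ and $G(I')=G(I)$; since $\height_{S'}I'=(n-1)-(d-1)=n-d=\height_S I$, the inductive hypothesis applied to $I'$ gives $\diam G(I)=\diam G(I')\le\height I'=\height I$, which closes the induction.

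The substantive geometric work -- producing a hyperplane section that is still a radical subspace arrangement, still Cohen--Macaulay, and has the same dual graph -- is already accomplished in Proposition~\ref{prop:RedToCurves} (ultimately in the Bertini argument of Lemma~\ref{lem:redtodim2}), so there is no serious obstacle left to surmount. The only points that genuinely need attention are the bookkeeping -- that the height of the ideal decreases exactly in step with the dimension of the ambient polynomial ring, so that the property of being Hirsch is faithfully transmitted along the induction -- and the degenerate low-dimensional base cases $d\le 1$, which are not literally covered by the ``union of lines'' hypothesis and so have to be disposed of separately, though they are trivial as indicated above.
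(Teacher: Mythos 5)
Your proof is correct and follows exactly the route the paper intends (the paper leaves this corollary without an explicit proof, as an immediate iteration of Proposition \ref{prop:RedToCurves} down to dimension $2$). Your extra care about the infinite-field hypothesis and the degenerate cases $d\le 1$ is sound bookkeeping that the paper glosses over.
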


\section{Gorenstein algebras and r-connectivity}\label{sec:gor}
To deal with Gorenstein algebras, we need a tool from liaison theory. Recall that 
inside the polynomial ring $S$, two ideals $I$ and $I'$ without common primary components, are called \emph{geometrically $G$-linked} if
$S/(I \cap I')$ is Gorenstein. (This is stronger than \emph{algebraically $G$-linked}, a property of pairs of ideals widely studied in the literature; cf.\ e.g.\ \cite{Mi}.) Liaison theory easily implies the following result:

\begin{proposition}\label{prop:2-conngor}
Let $I\subset S$ be an ideal such that $S/I$ is Gorenstein. Let $\qq$ be a primary component of $I$. Let $v$ be the vertex of $G(I)$ corresponding to the minimal prime $\pp=\sqrt{\qq}$ of $I$. If $S/\qq$ is Cohen-Macaulay, then 
either 
\begin{compactenum}[\rm (1)]
\item $I$ is primary and $G(I)$ consists only of $v$, or 
\item the deletion of $v$ from $G(I)$ yields a graph $G'$ that is connected.
\end{compactenum}
\end{proposition}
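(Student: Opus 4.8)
The plan is to delete the vertex $v$ and recognize the remaining graph as the dual graph of an ideal that is \emph{geometrically $G$-linked} to $\qq$, to which Hartshorne's connectedness theorem can then be applied.

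Since $S/I$ is Gorenstein it is Cohen--Macaulay, so $I$ is height-unmixed and has no embedded primes. Write $c=\height I$, and let $\qq=\qq_1,\qq_2,\dots,\qq_s$ be the (uniquely determined) primary components of $I$, with $\pp_i=\sqrt{\qq_i}$ all of height $c$. If $s=1$, then $I=\qq$ is $\pp$-primary and $G(I)$ consists of the single vertex $v$: this is alternative~(1). So assume $s\ge 2$ and set $J=\qq_2\cap\dots\cap\qq_s$. This is a proper ideal with $I=\qq\cap J$ and $\Min(J)=\{\pp_2,\dots,\pp_s\}$; moreover $\qq$ and $J$ have no associated prime in common, hence no common primary component. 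As $S/(\qq\cap J)=S/I$ is Gorenstein, the ideals $\qq$ and $J$ are, by definition, geometrically $G$-linked.

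The key step is the liaison principle that geometric $G$-linkage preserves the Cohen--Macaulay property (cf.\ \cite{Mi}). Concretely, since $\qq$ and $J$ have no common component one checks $(\qq\cap J):\qq=J$, whence $J/I\cong\Hom_{S/I}(S/\qq,\,S/I)$; because $S/I$ is Gorenstein, $\omega_{S/I}\cong S/I$ up to a twist, so $J/I$ is, up to a twist, the canonical module $\omega_{S/\qq}$. As $S/\qq$ is assumed Cohen--Macaulay, $\omega_{S/\qq}$ is a maximal Cohen--Macaulay $S/I$-module, and a short local-cohomology computation on $0\to J/I\to S/I\to S/J\to 0$ --- using that the relevant connecting map $H^{n-c}_\mm(J/I)\to H^{n-c}_\mm(S/I)$ is Matlis dual to the surjection $S/I\twoheadrightarrow S/\qq$, hence injective --- yields $H^i_\mm(S/J)=0$ for all $i<n-c$. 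Thus $S/J$ is Cohen--Macaulay; in particular $J$ is height-unmixed of height $c=\height I$, so $G(J)$ is defined.

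It remains to identify $G(J)$ with the graph $G'$ obtained from $G(I)$ by deleting $v$. The vertex sets coincide, both being $\{\pp_2,\dots,\pp_s\}=\Min(J)$; and since $\height J=\height I=c$, the pair $\pp_i,\pp_j$ forms an edge of $G(J)$ exactly when $\height(\pp_i+\pp_j)=c+1$, which is verbatim the condition for that pair to form an edge of $G(I)$, hence of $G'$. Therefore $G(J)=G'$, and applying Hartshorne's theorem \cite{Ha} to the Cohen--Macaulay ring $S/J$ shows that $G'$ is connected, which is alternative~(2).

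The only nonformal ingredient is the liaison step of the third paragraph: establishing that $J/I$ is (up to twist) the canonical module of $S/\qq$ and that Cohen--Macaulayness propagates from $S/\qq$ to $S/J$ in this \emph{geometric}, not necessarily complete-intersection, setting. The reduction to the case $s\ge 2$, the bookkeeping with primary components, the identification $G(J)=G'$, and the final appeal to Hartshorne's theorem are all routine; note also that $J$ automatically inherits from $I$ the absence of embedded primes, so no subtlety arises in defining $G(J)$.
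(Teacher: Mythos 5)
Your proof is correct and follows essentially the same route as the paper's: split off $\qq$, observe that $J=\qq_2\cap\dots\cap\qq_s$ is geometrically $G$-linked to $\qq$, invoke liaison (the paper cites Schenzel and Migliore for the step you sketch via the canonical module and local cohomology) to get that $S/J$ is Cohen--Macaulay, identify $G(J)$ with $G(I)-v$, and apply Hartshorne's theorem. The only difference is that you spell out the liaison argument the paper delegates to a citation.
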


\begin{proof}
Let us write $I=\bigcap_{i=1}^s\qq_i$ where for all $i=1,\ldots ,s$, $\qq_i$ is $\pp_i$-primary. Up to relabeling, we may assume $\qq_1=\qq$. If $s=1$ then $I$ is primary and case (1) is settled, so assume $s\ge 2$. The graph $G(I)$ is on vertices $v=v_1,v_2, \ldots ,v_s$ corresponding to the $\pp_i$'s. Note that $G(I) - v = G(J)$, where 
$J=\qq_2 \cap \qq_3 \cap \ldots \cap \qq_s$.
Now, since $J$ is geometrically linked to $\qq$ by a Gorenstein ideal and $S/\qq$ is Cohen-Macaulay, it follows by the work of Schenzel \cite{Sc} that $S/J$ is Cohen-Macaulay as well (see Migliore \cite[Theorem 5.3.1]{Mi}). In particular $G(J)$ is connected.
\end{proof}

\begin{corollary}\label{cor:2-conngor}
Let $I\subset S$ be an ideal such that $S/I$ is Gorenstein. If $S/\qq$ is Cohen-Macaulay for any primary component $\qq$ of $I$, then either $G(I)$ is a point, or it is a segment, or it is a 2-connected graph. In any case, 
\[\diam G(I) \le \displaystyle \frac{e(S/I)}{2}.\]
\end{corollary}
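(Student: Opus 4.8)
The plan is to combine Proposition~\ref{prop:2-conngor}, applied to every primary component of $I$, with Hartshorne's connectedness theorem to pin down the three possible shapes of $G(I)$, and then to read off the diameter bound from Lemma~\ref{lem:menger} and Lemma~\ref{lem:generalbound}.

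First, since $S/I$ is Gorenstein it is in particular Cohen--Macaulay, so Hartshorne's theorem gives that $G(I)$ is connected; let $s$ denote its number of vertices. If $s=1$ then $G(I)$ is a point, and since $e(S/I)\ge 1$ the inequality $\diam G(I)=0\le e(S/I)/2$ is trivial. If $s=2$, then a connected graph on two vertices without loops or parallel edges is a single edge, so $G(I)$ is a segment; here $\diam G(I)=1$, and by the additivity formula \eqref{eq:addgen} the multiplicity $e(S/I)$ is a sum of $s=2$ positive integers, hence $e(S/I)\ge 2$ and $\diam G(I)=1\le e(S/I)/2$. It remains to treat $s\ge 3$: here I would apply Proposition~\ref{prop:2-conngor} to each primary component $\qq$ of $I$. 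The corollary's hypothesis is exactly that $S/\qq$ is Cohen--Macaulay for every primary component $\qq$, so the proposition applies, and since $s\ge 2$ it forces that for the vertex $v$ corresponding to $\sqrt{\qq}$ the graph $G(I)-v$ is connected. As $\qq$ ranges over all primary components, $v$ ranges over all vertices of $G(I)$, so $G(I)$ stays connected after the deletion of any single vertex; combined with $s\ge 3$, Menger's theorem (Theorem~\ref{thm:menger}(i)) shows that $G(I)$ is $2$-connected. This establishes the trichotomy.

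For the diameter bound in the remaining ($2$-connected) case, Lemma~\ref{lem:menger}(a) with $k=2$ gives $\diam G(I)\le \lfloor (s-2)/2\rfloor + 1 \le s/2$, while Lemma~\ref{lem:generalbound} gives $s\le e(S/I)$; together these yield $\diam G(I)\le e(S/I)/2$. Since the point and segment cases were already checked, the bound holds in all three cases.

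I do not expect a serious obstacle. The only points needing a little care are: checking that Proposition~\ref{prop:2-conngor} is genuinely available for \emph{every} vertex (i.e.\ that ``primary component'' in its statement matches the corollary's blanket hypothesis), and not letting the small cases $s=1,2$ slip through the word ``$2$-connected'', which in this paper presupposes at least three vertices; both are handled by the explicit split above.
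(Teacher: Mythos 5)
Your proposal is correct and follows essentially the same route as the paper: connectedness from Hartshorne/Cohen--Macaulayness, Proposition~\ref{prop:2-conngor} applied to every primary component to show that deleting any single vertex keeps $G(I)$ connected, then Lemma~\ref{lem:menger}(a) and Lemma~\ref{lem:generalbound} for the diameter bound. The paper's proof is simply terser; your explicit handling of the cases $s=1,2$ is a harmless (and slightly more careful) elaboration of the same argument.
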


\begin{proof} $G(I)$ is connected, and by Proposition \ref{prop:2-conngor}, the deletion of any vertex leaves $G(I)$ connected. Let $s$ be the number  of vertices of $G$. By Lemma \ref{lem:menger}, $\diam G(I) \le s/2$, whence we conclude via Lemma \ref{lem:generalbound}. 
\end{proof}

\begin{corollary} \label{cor:Weak}
Let $I\subset S$ be an ideal defining a subspace arrangement. If $S/I$ is Gorenstein, then either $G(I)$ is a point, or it is a segment, or it is a 2-connected graph. 
\end{corollary}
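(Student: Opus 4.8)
The plan is to reduce this immediately to Corollary \ref{cor:2-conngor}. That corollary already yields the desired trichotomy (point, segment, or $2$-connected) for any ideal $I$ with $S/I$ Gorenstein, \emph{provided} that $S/\qq$ is Cohen--Macaulay for every primary component $\qq$ of $I$. Since $S/I$ is assumed Gorenstein, the only thing left to verify is this Cohen--Macaulayness of the primary quotients, and the point is that the subspace-arrangement hypothesis makes it automatic.

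First I would record that an ideal defining a subspace arrangement is, by definition, radical: it is a finite intersection of prime ideals each generated by linear forms. A radical ideal has no embedded primes, so its primary decomposition is simply $I = \bigcap_{\pp \in \Min(I)} \pp$; in particular every primary component $\qq$ of $I$ is itself one of the minimal primes $\pp \in \Min(I)$. Next, each such $\pp$ is generated by linear forms, so after a linear change of coordinates $S/\pp$ is a polynomial ring over $\KK$, which is Cohen--Macaulay. Hence $S/\qq$ is Cohen--Macaulay for every primary component $\qq$ of $I$.

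With that hypothesis checked, I would invoke Corollary \ref{cor:2-conngor} directly: $G(I)$ is a point, or a segment, or a $2$-connected graph. I do not expect any real obstacle here — the statement is a specialization of the preceding corollary, and the entire content of the argument is the observation that ``subspace arrangement'' forces $I$ to be radical (so its primary components coincide with its minimal primes) and that a prime generated by linear forms has a polynomial, hence Cohen--Macaulay, quotient ring.
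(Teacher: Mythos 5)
Your proof is correct and matches the paper's (implicit) argument exactly: the paper states this corollary without proof precisely because, as you observe, a subspace arrangement ideal is radical with all primary components generated by linear forms, so the Cohen--Macaulayness hypothesis of Corollary \ref{cor:2-conngor} is automatic. Nothing to add.
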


Our goal is now to strengthen the conclusion of Corollary \ref{cor:Weak}. But first, the following examples show that one needs particular caution with the assumptions of Proposition \ref{prop:2-conngor} and its corollaries. First of all, the Cohen--Macaulayness assumption on $S/\qq$ is necessary.

\begin{example} \label{ex:GorNot2conn}
Let $I=(x_0x_3-x_1 x_2,  x_1^2 x_3-x_0 x_2^2) \subset \mathbb{Q}[x_0,\ldots,x_3]=S$. Since $I$ is a complete intersection, $S/I$ is Gorenstein and Cohen--Macaulay.  The prime decomposition of $I$ can be computed with the software \texttt{Macaulay2} \cite{macaulay2}:
\[\sqrt{I} \, = \, I \, = \,  ({x}_{0},{x}_{1})  \: \cap \:  ({x}_{2},{x}_{3})  \: \cap \: 
({x}_{1} {x}_{2}-{x}_{0} {x}_{3},{x}_{2}^{3}-{x}_{1} {x}_{3}^{2},{x}_{0} {x}_{2}^{2}-{x}_{1}^{2} {x}_{3},{x}_{1}^{3}-{x}_{0}^{2} {x}_{2}).\]
The third ideal is the ideal of the projection of a rational normal curve of degree 4: 
\[C=\{[t^4,t^3u,tu^3,u^4]:[t,u]\in\PP^1\}\subset \PP^3.\]
The celebrity of such a quartic curve resides  in the fact that it was studied in Hartshorne's paper \cite{Ha1}, where $C$ was shown to be a set-theoretic complete intersection in positive characteristic. It is unknown whether the same holds in characteristic $0$. However, the coordinate ring of $C$ is \emph{not} Cohen--Macaulay. 
It is easy to see that $G(I)$ is simply a path of two edges, since the primes $({x}_{0},{x}_{1})$ and $({x}_{2},{x}_{3})$ are not connected by an edge. Hence $G(I)$ is $1$-connected, but not $2$-connected. In fact, removing the vertex corresponding to $C$ disconnects the graph.
\end{example}

The ideal of Example \ref{ex:GorNot2conn} is radical. We stress that for non-radical ideals, Proposition \ref{prop:2-conngor} requires the Cohen-Macaulayness of $S/\qq$ (where $\qq$ is the $\pp$-primary ideal), and \emph{not} of $S/\pp$.  The next examples highlight why this distinction is important.

\begin{example} \label{ex:QnotP}
Let $I=(x_4^2-x_3x_5, \;   x_3x_4-x_2x_5, \; x_2x_3-x_1x_5, \;   x_1x_2-x_0x_3) \subset \mathbb{C}[x_0,\ldots,x_5]$. The ideal $I$ is a complete intersection. Its minimal primes are 
\[
\begin{array}{rcl}
\pp_1 &=& \textrm{the prime defining the projective closure of the affine curve } (t, t^3, t^4, t^5, t^6)\\
\pp_2 &=& (x_5,x_4,x_2,x_0),\\
\pp_3 &=& (x_4,x_3,x_2,x_1),\\
\pp_4 &=& (x_5,x_4,x_3,x_2),\\
\pp_5 &=& (x_5,x_4,x_3,x_1).
\end{array}
\]
If $S=\mathbb{C}[x_0,\ldots,x_5]$, clearly  $S/\pp_4$ is Cohen--Macaulay. Using \texttt{Macaulay2} we computed the edges of the graph $G(I)$: they are $13$, $14$, $15$, $24$, $34$, $35$, and $45$. Note that the only vertex adjacent to $2$ is $4$, so deleting $4$ disconnects the graph. How do we reconcile this with Proposition \ref{prop:2-conngor}? If we search for the $\pp_4$-primary ideal in a primary decomposition of $I$, this is not $\pp_4$. It is instead
\[\qq_4=(x_5^2,\; x_4x_5, \; x_4^2-x_3x_5, \; x_3x_4-x_2x_5, \; x_2x_3-x_1x_5, \, x_2^2-x_0x_5, \;x_1x_2-x_0x_3,\; x_3^4)\]
and one can check that $S/\qq_4$ is \emph{not} Cohen--Macaulay.
\end{example}

\begin{example} \label{ex:GorNot2conna}
Let $\pp$ be the prime homogeneous ideal in $S=\Z_2[x_1,\ldots,x_6]$ defining the projective curve
\[
\begin{array}{c}
(t^5u + t^4u^2 + u^6, \; t^5u + t^4u^2 + t^2u^4 + u^6,  \; t^5u + t^4u^2 + tu^5, \; t^6 + t^3u^3 + tu^5, \\ 
t^6 + t^5u + t^4u^2 + t^3u^3 + t^2u^4 + u^6, \: t^6 + t^5u + t^3u^3 + tu^5 + u^6)
\end{array}
  \subset \PP^5.\]
One can see with \texttt{Macaulay2} that $S/\pp$ is not Cohen-Macaulay and $\pp$ is generated by the $8$ quadratic polynomials
\[\!\!
\begin{array}{l}
a = x_4^2 + x_1x_5 + x_4x_5 + x_4x_6 + x_5x_6,\\
b = x_2x_3 + x_3x_4 + x_1x_5 + x_3x_6 + x_4x_6 + x_5x_6 + x_6^2,\\
c = x_2x_4 + x_3x_4 + x_1x_5 + x_3x_5 + x_5^2 + x_4x_6 + x_5x_6,\\
d = x_2^2 + x_1x_4 + x_3x_4 + x_2x_5 + x_3x_5 + x_4x_5 + x_5^2 + x_1x_6 + x_2x_6 + x_3x_6 + x_4x_6 + x_5x_6 + x_6^2,\\
e =  x_3^2 + x_3x_5 + x_5^2 + x_1x_6 + x_4x_6,\\
f = x_1x_3 + x_1x_4 + x_1x_5 + x_2x_5 + x_5^2 + x_1x_6 + x_2x_6 + x_3x_6 + x_4x_6 + x_5x_6,\\
g = x_1x_2 + x_3x_4 + x_2x_5 + x_3x_5 + x_4x_5 + x_1x_6 + x_3x_6 +x_5x_6, \\
h = x_1^2 + x_1x_5 + x_4x_5 + x_5^2 + x_2x_6 + x_4x_6 + x_6^2.
\end{array}
\]
The ideal $I_1=(a,c,f,g)$ is a complete intersection and has radical equal to $\pp$, so $\pp$ is a set-theoretic complete intersection. $G(I_1)$ consists of a single point. 

The ideal $I_2=(b,f,g,h)$ is a complete intersection whose radical is strictly contained in $\pp$. The minimal primes of $I_2$ are
\[
\begin{array}{rcl}
\pp_1 &=& \pp\\
\pp_2 &=& ({x}_{6},{x}_{4}+{x}_{5},{x}_{2}+{x}_{5},{x}_{1}),\\
\pp_3 &=& ({x}_{6},{x}_{5},{x}_{3},{x}_{1}),\\
\pp_4 &=& ({x}_{5}+{x}_{6},{x}_{3}+{x}_{6},{x}_{2},{x}_{1}+{x}_{6}),\\
\pp_5 &=& ({x}_{4}+{x}_{5}+{x}_{6},{x}_{3}+{x}_{5},{x}_{2}+{x}_{5}+{x}_{6},{x}_{1}+{x}_{6})
\end{array}
\]
Hence the graph $G(I_2)$ consists of the edges $12,14,15, 25,34,45$. In particular, $G(I_2)$ has diameter $3$. Since $3$ is a leaf (only $4$ is adjacent to it), $G(I_2)$ is not $2$-connected. As in Example \ref{ex:QnotP}, $S/\pp_4$ is Cohen--Macaulay, but $S/\qq_4$ is not, where $\qq_4$ is the $\pp_4$-primary component.

Finally, the ideal  $I_3=(c,f,g,h)$ is again a complete intersection with radical strictly contained in $\pp$. The minimal primes of $I_3$ are
\[
\begin{array}{rcl}
\pp'_1 &=& \pp\\
\pp'_2 &=& (x_6,x_4+x_5,x_2+x_5,x_1),\\
\pp'_3 &=& (x_6,x_5,x_4,x_1),\\
\pp'_4 &=& (x_5+x_6,x_3+x_6,x_2,x_1+x_6),\\
\pp'_5 &=& (x_3+x_4,x_2+x_4+x_6,x_1+x_4+x_5+x_6,x_4^2+x_5^2+x_5x_6+x_6^2),\\
\pp'_6 &=& (x_4+x_5+x_6,x_3+x_5,x_2+x_5+x_6,x_1+x_6)
\end{array}
\]
The graph $G(I_3)$ has edges 12, 14, 15, 16, 23, 25, 26, 45, 46, 56. Such a graph has diameter 3 and is not $2$-connected: The vertex 3 is adjacent only to 2. As above, $S/\pp_2$ is Cohen--Macaulay but $S/\qq_2$ is not, where $\qq_2$ is the $\pp_2$-primary component.
\end{example}


Next we show that the conclusion ``$2$-connected'' of Proposition \ref{prop:2-conngor} is best possible.

\begin{example} \label{ex:GorNot3conn}
Let $J$ be the homogeneous ideal of $S=\mathbb{Q}[x_0, ..., x_4]$ given by 
\[J=(-x_1x_2+x_0x_3, \; -x_2^2+x_1x_3,  \;-x_1x_3+x_0x_4).\]
$J$ is a complete intersection, hence in particular $S/J$ is Gorenstein (of Castelnuovo--Mumford regularity $3$). One of the minimal primes $\pp_1$  of $J$ is well known, as it defines the rational normal curve 
\[C=\{[t^4,t^3u,t^2u^2, tu^3,u^4]:[t,u]\in\PP^1\}\subset \PP^4.\]
The other primes are $\pp_2=(x_0,x_1,x_2)$,
$\pp_3=(x_0,x_2,x_3)$, 
$\pp_4=(x_2,x_3,x_4)$.
$J$ is ``almost'' radical: a primary decomposition of $J$ is 
\[J= \pp_1 \cap \qq_2 \cap \pp_3 \cap \pp_4,\]
where $\qq_2=(x_0,x_1,x_2^2)$ is $\pp_2$-primary. For each primary component $\qq$ of $J$, $S/\qq$ is a Cohen-Macaulay (and even level) algebra. However, $G(J)$ is \emph{not} the complete graph on $4$ vertices, because the edge between $\pp_2$ and $\pp_4$ is missing. (All other edges are there, so $G(J)$ is $K_4$ minus an edge.) In particular, $G(J)$ is $2$-connected, but not $3$-connected: The deletion of the vertices corresponding to $\pp_1$ and $\pp_3$ disconnects it. 
\end{example}

With all these careful distinctions in mind, we are ready to announce our main result.

\begin{theorem}\label{thm:GorensteinRConnected}
Let $I\subset S$ be the defining ideal of a subspace arrangement. If $S/I$ is Gorenstein of Castelnuovo--Mumford regularity $r$, then $G(I)$ is $r$-connected.
\end{theorem}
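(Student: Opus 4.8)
The plan is to upgrade the $2$-connectivity of Corollary~\ref{cor:Weak} to $r$-connectivity by combining the liaison input behind Proposition~\ref{prop:2-conngor} (Schenzel's theorem on geometric links) with the regularity bound of Derksen--Sidman~\cite{DS}: the defining ideal of an arrangement of $t$ linear subspaces has regularity at most $t$, i.e.\ $\reg(S/I_X)\le t-1$. Write $I=\pp_1\cap\dots\cap\pp_s$ with all $\pp_i$ generated by linear forms, set $c=\height I$ and $d=\dim S/I=n-c$, and note that, $S/I$ being Gorenstein, we have $\omega_{S/I}\cong S/I(a)$ with $a=r-d$; equivalently, by \eqref{eq:grothendieck} the $a$-invariant of $S/I$ is $r-d$, which is the top degree in which $H^d_\mm(S/I)$ is nonzero. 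First, the number of vertices: Derksen--Sidman gives $r=\reg(S/I)\le s-1$, so $G(I)$ has at least $r+1$ vertices, which is the first half of the definition of $r$-connectedness. By Menger's theorem (Theorem~\ref{thm:menger}(i)), what remains is to show that $G(I)$ survives the deletion of any $t\le r-1$ of its vertices.

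So suppose some set $W$ of $t\le r-1$ vertices disconnects $G(I)$; take $W$ inclusion-minimal with this property, and let $\Min(I)=A\sqcup B\sqcup W$ record the two sides of $G(I)-W$. Put $J_A=\bigcap_{\pp\in A}\pp$, $J_B=\bigcap_{\pp\in B}\pp$, $J=J_A\cap J_B$, and $J_W=\bigcap_{\pp\in W}\pp$, so that $I=J\cap J_W$, the ideals $J$ and $J_W$ share no primary component, and $G(J)=G(I)-W$ is disconnected. Since $S/(J\cap J_W)$ is Gorenstein, $J$ and $J_W$ are geometrically $G$-linked; applying $\Hom_{S/I}(-,\omega_{S/I})$ and using the link relation $I:J=J_W$ (and $\dim S/J=\dim S/I=d$) produces an isomorphism of canonical modules $\omega_{S/J}\cong (J_W/I)(r-d)$.

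Now invoke Derksen--Sidman twice: $J$ is an arrangement of $s-t\ge 2$ subspaces, so $\reg(S/J)\le s-t-1$; and $J_W$ is an arrangement of $t$ subspaces, so $\reg(S/J_W)\le t-1$. On the other hand, because $G(J)$ is disconnected, Hartshorne's theorem~\cite{Ha} prevents $S/J$ from being Cohen--Macaulay; via the Mayer--Vietoris sequence $0\to S/J\to S/J_A\oplus S/J_B\to S/(J_A+J_B)\to 0$, in which $\height(J_A+J_B)\ge c+2$, this non-Cohen--Macaulayness becomes a nonvanishing of intermediate local cohomology of $S/J$ sitting in a degree pinned down by the isomorphism $\omega_{S/J}\cong(J_W/I)(r-d)$, the exact sequence $0\to J_W/I\to S/I\to S/J_W\to 0$, and the bound $\reg(S/J_W)\le t-1$. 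Chasing these degrees forces $r\le t$, contradicting $t\le r-1$ and finishing the proof.

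The heart of the matter, and the step I expect to be the main obstacle, is precisely this last degree-chase: showing that ``$G(J)$ disconnected, with $W$ of size $t$'' forces $r\le t$. It amounts to bookkeeping graded local-cohomology degrees across the geometric $G$-link, so that the $a$-invariant $r-d$ of the Gorenstein ring $S/I$ gets squeezed between a quantity controlled by $\reg(S/J)\le s-t-1$ on one side and by $\reg(S/J_W)\le t-1$ on the other; one must check these genuinely close up to $r\le t$ rather than to the already-known $r\le s-1$. A cleaner way to run the chase --- and a safe fallback if it becomes delicate --- is to reduce first to $\dim S/I\le 2$ by repeated use of Lemma~\ref{lem:redtodim2}: its hypotheses hold here, since each $S/\pp_i$ is a polynomial ring hence Cohen--Macaulay, and it preserves radicality, all graded Betti numbers (hence Gorensteinness and the value of $r$), the dual graph, and --- because it preserves the Betti table of each minimal prime, forcing each $\phi(\pp_i)$ to be again generated by $c$ linearly independent linear forms --- the property of defining a subspace arrangement. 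One is then reduced to a configuration of lines in projective space, where the Hilbert functions and local cohomology of $S/I$, $S/J$ and $S/J_W$ are completely explicit and the degree count becomes elementary arithmetic.
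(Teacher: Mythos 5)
Your proposal is correct and matches the paper's proof in all essentials: after securing $s\ge r+1$ from Derksen--Sidman and invoking Menger, one geometrically $G$-links the surviving intersection $J$ to the deleted part $J_W$, applies Schenzel's duality across the Gorenstein link, and kills the relevant graded piece of $H^1_\mm(S/J_W)$ using $\reg(S/J_W)\le t-1\le r-2$. Note only that the reduction to $\dim S/I=2$ via Lemma \ref{lem:redtodim2} is not merely a ``safe fallback'' but essentially mandatory --- only for curves is disconnectedness of the dual graph equivalent to $H^1_\mm(S/J)_0\ne 0$, since connectedness in codimension one coincides with connectedness there --- and this is exactly the route the paper takes.
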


\begin{proof}
Let $\overline{\KK}$ be the algebraic closure of $\KK$, $S'=S\otimes_{\KK}\overline{\KK}$ and $I'=IS'$. Since $S\hookrightarrow S'$ is faithfully flat, we have that $S'/I'$ is Gorenstein and has regularity $r$. Furthermore, if $I=\pp_1 \cap \ldots \cap \pp_s$, again, by the flatness we have $I'=\pp_1S' \cap \ldots \cap \pp_s S'$. Extensions of prime ideals are not prime in general, but since our $\pp_i$'s are generated by linear forms, the $\pp_iS'$ are also prime ideals. So, $I'$ is the defining ideal of a subspace arrangement, and $G(I')=G(I)$. This means there is no loss in assuming that $\KK$ is algebraically closed. 

Let $d=\dim(S/I)$. By Lemma \ref{lem:redtodim2}, we can assume that $d=2$. This has the advantage that ``connected in codimension one'' is the same as ``connected''. Let us write
\[I=\bigcap_{i=1}^s\pp_i \subset S=\KK[x_1,\ldots ,x_n]\]
where the $\pp_i$'s are ideals generated by linear forms and have height $n-2$. For the rest of the proof, for any subset $A\subset \{1,\ldots ,s\}$ we set $I_A=\bigcap_{i\in A}\pp_i$.

To show that $G(I)$ is $r$-connected, we must verify that $G(I_A)$ is connected for any subset $A\subset \{1,\ldots ,s\}$ such that $|\{1,\ldots ,s\}\setminus A|<r$. Notice that, because $I_A$ is radical and $\KK$ is algebraically closed, we have:
\[G(I_A) \mbox{ is connected } \iff C_A \mbox{ is connected } \iff H^0(C_A,\O_{C_A})\cong \KK \iff H_{\mm}^1(S/I_A)_0=0,\]
where $C_A$ is the curve $\Proj(S/I_A)\subset \PP^{n-1}$ and $\mm$ is the irrelevant ideal of $S$. 

Set $B=\{1,\ldots ,s\}\setminus A$, $I_B=\cap_{i\in B}\pp_i$ and $C_B=\Proj(S/I_B)$. Then $C_A$ and $C_B$ are geometrically linked by $C=\Proj(S/I)$, which is arithmetically Gorenstein. By Schenzel's work~\cite{Sc} (see also \cite[Theorem 5.3.1]{Mi}) we have a graded isomorphism
\[H_{\mm}^1(S/I_A)\cong H_{\mm}^1(S/I_B)^{\vee}(2-r), \]
where $-^{\vee}$ means $\Hom_{\KK}(-,\KK)$. Therefore $H_{\mm}^1(S/I_A)_0$ is nonzero if and only if there is a nonzero map of $\KK$-vector spaces from $H_{\mm}^1(S/I_B)$ to $\KK$ of degree $2-r$, if and only if $H_{\mm}^1(S/I_B)_{r-2}\neq 0$. However, by the main result of Derksen and Sidman \cite{DS},
\[\reg(S/I_B)=\reg(I_B)-1\leq |B|-1< r-1,\]
so that $H_{\mm}^1(S/I_B)_j=0$ for all $j\geq r-2$ by Equation \eqref{eq:grothendieck}, and this concludes the proof.
\end{proof}

\begin{remark}\label{rem:RegBestPossible}
It is natural to ask whether Theorem \ref{thm:GorensteinRConnected} can be extended from the generality of subspace arrangements, to arbitrary ideals. The answer is negative. In fact, Example \ref{ex:GorNot3conn} presents an ideal $J$ such that $S/J$ is Gorenstein and has Castelnuovo-Mumford regularity $3$, yet $G(J)$ is not $3$-connected. Another example would be given by the complete intersection 
$I=(x_4^2-x_3x_5,  \;  x_1x_4-x_0x_5,   \;   x_2x_3-x_1x_5,  \;  x_1x_2-x_0x_3)$: the graph $G(I)$ is $2$- but not $3$-connected, while $\reg(S/I)=4$.

Similarly, one could ask whether Theorem \ref{thm:GorensteinRConnected} can be extended from Gorenstein to Cohen--Macaulay subspace arrangements. The answer is once again negative, already for coordinate subspace arrangements. For example, let $\Delta$ be the graph $12$, $13$, $23$, $14$, $45$. The Stanley--Reisner ring $\KK[x_1, \ldots, x_5]/I_\Delta$ is Cohen--Macaulay of regularity $2$. However, $G(I_\Delta)$ is connected, but not $2$-connected. 
\end{remark}

\begin{corollary}[Klee \cite{Klee}]  \label{cor:klee}
Let $I=I_\Delta$ be the Stanley-Reisner ideal of a homology $d$-sphere $\Delta$. Then $G(I)$ is $(d+1)$-connected.
\end{corollary}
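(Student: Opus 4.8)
The plan is to deduce this as an immediate special case of Theorem \ref{thm:GorensteinRConnected}, once we verify that its three hypotheses hold for $I = I_\Delta$ with $r = d+1$. First I would check that $I_\Delta$ defines a subspace arrangement: being a squarefree monomial ideal it is radical, and by the formula $I_\Delta = \bigcap_{F \text{ facet of } \Delta}(x_i : i \notin F)$ recalled in Section \ref{sec:combcomm} it is a finite intersection of prime ideals each generated by a subset of the variables. Since variables are linear forms, $I_\Delta$ is the defining ideal of a (coordinate) subspace arrangement.

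Next I would invoke the classical fact that the Stanley--Reisner ring of a homology sphere is Gorenstein. By Stanley's characterization of Gorenstein Stanley--Reisner rings (see \cite{BrunsHerzog}), $S/I_\Delta$ is Gorenstein if and only if $\Delta$ is the join of a (possibly empty) simplex with a Gorenstein* complex, and a homology $d$-sphere is itself Gorenstein* — it admits no join factor that is a simplex, since $S^d$ is not contractible. Hence $S/I_\Delta$ is Gorenstein.

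The last ingredient is the identity $\reg(S/I_\Delta) = d+1$. By Reisner's criterion a homology $d$-sphere is Cohen--Macaulay, so $S/I_\Delta$ is a Cohen--Macaulay ring of Krull dimension $d+1$, and by Lemma \ref{lem:deghreg} its regularity equals the degree of the $h$-polynomial $h(t) = \sum_{i=0}^{d+1} h_i t^i$ of $S/I_\Delta$. The Dehn--Sommerville relations for homology spheres give that the $h$-vector is symmetric, $h_i = h_{d+1-i}$ for all $i$; since $h_0 = 1$ we obtain $h_{d+1} = 1 \neq 0$, so $\deg h = d+1$. (Alternatively one may just cite the remark from the introduction that the Stanley--Reisner ring of a simplicial homology $d$-sphere has Castelnuovo--Mumford regularity $d+1$.)

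With these three facts in hand, Theorem \ref{thm:GorensteinRConnected} applies to $I = I_\Delta$ and yields that $G(I)$ is $(d+1)$-connected. There is essentially no obstacle here: the argument is a bookkeeping of standard facts about Stanley--Reisner rings of homology spheres, and the only thing requiring care is citing the correct classical statements (Reisner's criterion, Stanley's Gorenstein classification, and the Dehn--Sommerville relations).
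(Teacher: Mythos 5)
Your proposal is correct and follows essentially the same route as the paper: both deduce the corollary from Theorem \ref{thm:GorensteinRConnected} by verifying that $I_\Delta$ defines a (coordinate) subspace arrangement and that $S/I_\Delta$ is Gorenstein of regularity $d+1$. The only difference is bookkeeping of citations — the paper extracts both facts at once from Hochster's formula \cite[Corollary 5.12]{MiSt}, whereas you assemble them from Reisner's criterion, Stanley's Gorenstein$^*$ characterization, and the Dehn--Sommerville symmetry of the $h$-vector; both derivations are valid.
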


\begin{proof}
By Hochster's formula \cite[Corollary 5.12]{MiSt}, if $\Delta$ is a homology $d$-sphere, then its Stanley--Reisner ring is Gorenstein of regularity $d+1$.
\end{proof} 

\begin{corollary}[Balinski]  \label{cor:balinski}
If $P$ is any (simple) $d$-dimensional convex polytope, the $1$-skeleton of $P$ is $d$-connected.
\end{corollary}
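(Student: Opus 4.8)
The plan is to deduce Balinski's theorem from Corollary \ref{cor:klee} by polarity, turning a statement about the graph of a simple polytope into a statement about the dual graph of a simplicial sphere. First I would replace the simple $d$-polytope $P$ by its polar dual $P^{*}$, which is a \emph{simplicial} $d$-polytope; its boundary complex $\Delta := \partial P^{*}$ is then a simplicial $(d-1)$-sphere, in particular a homology $(d-1)$-sphere over every field $\KK$.

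The next step is the standard combinatorial dictionary supplied by polarity: the face lattice of $P^{*}$ is the opposite of the face lattice of $P$, so the facets of $\Delta$ are in bijection with the vertices of $P$, and two facets of $\Delta$ share a ridge (a face of codimension one) if and only if the two corresponding vertices of $P$ span an edge of $P$. Equivalently, the dual graph of the simplicial complex $\Delta$, in the sense of Section \ref{sec:combcomm}, is isomorphic to the $1$-skeleton of $P$. By Lemma \ref{lem:combcomm}, this dual graph coincides with $G(I_{\Delta})$, the dual graph of the Stanley--Reisner ideal $I_{\Delta}$.

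Finally, since $\Delta$ is a homology $(d-1)$-sphere, Corollary \ref{cor:klee} gives that $G(I_{\Delta})$ is $d$-connected; transporting this conclusion along the isomorphism of the previous paragraph shows that the $1$-skeleton of $P$ is $d$-connected, as desired. I do not expect a genuine obstacle here: the statement is a formal corollary of Corollary \ref{cor:klee}. The only points that require the usual (but entirely standard) care are that the boundary complex of a simplicial polytope really is a simplicial sphere, and that polarity translates edges of $P$ into ridges of $\Delta$; both are classical facts from the theory of convex polytopes.
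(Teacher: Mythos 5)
Your proposal is correct and follows exactly the paper's route: pass to the polar dual $P^{*}$, identify the $1$-skeleton of $P$ with the dual graph of the simplicial sphere $\partial P^{*}$, and invoke Corollary \ref{cor:klee}. In fact you track the dimension more carefully than the paper's own two-line proof, which loosely calls $\partial P^{*}$ a ``$d$-sphere'' where the $(d-1)$-sphere you use is what makes Corollary \ref{cor:klee} yield exactly $d$-connectivity.
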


\begin{proof}
The $1$-skeleton of $P$ is the dual graph of the simplicial $d$-sphere $\Delta=\partial P^*$, where $P^*$ is the polytope polar dual to $P$. By Corollary \ref{cor:klee}, we conclude.
\end{proof}

\begin{corollary} \label{cor:CIdc-cConnected}
Let $I$ be a complete intersection of height $c$ defining a subspace arrangement, and let $d$ be the minimal degree of a generator of $I$. Then $G(I)$ is $(d-1)c$-connected.
\end{corollary}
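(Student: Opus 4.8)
The plan is to deduce this from Theorem~\ref{thm:GorensteinRConnected} by computing (or rather, bounding from below) the Castelnuovo--Mumford regularity of $S/I$ when $I$ is a complete intersection of height $c$ whose minimal generators have degrees $d_1 \le d_2 \le \dots \le d_c$, with $d = d_1$. First I would recall the standard fact that for a complete intersection the minimal graded free resolution is the Koszul complex on the $c$ generators, so $S/I$ is Gorenstein (in particular height-unmixed, and a subspace arrangement hypothesis is consistent), and its $h$-polynomial is $h(t) = \prod_{i=1}^c (1 + t + \dots + t^{d_i - 1}) = \prod_{i=1}^c \frac{1 - t^{d_i}}{1-t}$. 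Since a complete intersection is Cohen--Macaulay, Lemma~\ref{lem:deghreg} gives $\reg(S/I) = \deg h = \sum_{i=1}^c (d_i - 1)$.

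Next I would observe that $\sum_{i=1}^c (d_i - 1) \ge \sum_{i=1}^c (d-1) = (d-1)c$, since $d = d_1$ is the \emph{minimal} degree of a generator, so every $d_i \ge d$. Hence $r := \reg(S/I) \ge (d-1)c$. Now Theorem~\ref{thm:GorensteinRConnected} applies verbatim: $I$ defines a subspace arrangement and $S/I$ is Gorenstein of regularity $r$, so $G(I)$ is $r$-connected; and since $r \ge (d-1)c$, an $r$-connected graph is a fortiori $(d-1)c$-connected (removing fewer than $(d-1)c$ vertices removes fewer than $r$ vertices, so by Menger's theorem, Theorem~\ref{thm:menger}(i), the graph stays connected). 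This gives the claim.

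The only genuinely delicate point is the regularity computation for a complete intersection, but this is entirely standard (Koszul resolution plus the characterization in Lemma~\ref{lem:deghreg}), so I do not anticipate a real obstacle; the proof is essentially a two-line corollary. One mild subtlety worth a sentence is that one should double-check the degenerate cases — e.g.\ if $c = 0$ or $d = 1$ the conclusion "$(d-1)c$-connected" is vacuous (every graph is $0$-connected), and if $G(I)$ is a single point or a segment the statement is interpreted via the convention that such small graphs carry no connectivity obligation beyond what Theorem~\ref{thm:GorensteinRConnected} already records — so nothing new needs to be proved there. In the non-degenerate range the above argument is complete.
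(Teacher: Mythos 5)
Your proposal is correct and follows essentially the same route as the paper: the paper's proof is the one-line observation that for $I=(f_1,\dots,f_c)$ one has $\reg(S/I)=\sum_i\deg(f_i)-c\geq (d-1)c$, and then Theorem~\ref{thm:GorensteinRConnected} applies. Your derivation of that regularity formula via the Koszul resolution and Lemma~\ref{lem:deghreg}, and your remark that $r$-connected implies $(d-1)c$-connected, just make explicit what the paper leaves implicit.
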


\begin{proof}
If $I=(f_1,...,f_c)$, then the Castelnuovo-Mumford regularity of $S/I$ is $\deg(f_1) + ... +\deg(f_c) - c\geq (d-1)c$.
\end{proof}

It is easy to see that the connectivity bounds given by Theorem \ref{thm:GorensteinRConnected} and Corollaries \ref{cor:klee} and \ref{cor:CIdc-cConnected}, cannot be improved in general:

\begin{example} \label{ex:RegBestPossible}
Let $I_r = (x_1x_2, x_3x_4, \ldots, x_{2r-1}x_{2r}) \subset K[x_1, \ldots, x_{2r}] =S$. This $I_r$ is the Stanley-Reisner ring of the boundary of the $r$-dimensional crosspolytope. Since $\height(I_r)=r$, the ideal $I_r$ is a complete intersection. Moreover, the regularity of $S/I_r$ is exactly $r$. By Lemma \ref{lem:combcomm}, $G(I_r)$ is the dual graph of the $r$-crosspolytope, or in other words, the $1$-skeleton of the $r$-cube. So $G(I_r)$ is $r$-connected.  However, every vertex of $G(I_r)$ has degree $r$, so $G(I_r)$ is not $(r+1)$-connected.
\end{example}

\section{Arrangements of lines canonically embedded} \label{sec:arrangement}
Let $C$ be an arrangement of projective lines. Consider the graph $G(C)$ whose vertices correspond to the irreducible components of $C$, and such that two vertices are connected by an edge if and only if the intersection of the two corresponding irreducible components is nonempty. Once $C$ is embedded in some $\PP^N$, we have $G(C)=G(I)$, where $I$ is the ideal defining $C$. Whether this defining ideal $I$ is quadratic or not depends on the embedding; and the same is true for whether $S/I$ is Cohen-Macaulay.  
In this section, we will prove bounds on $\diam G(I)$ for a certain, special embedding of $C$, called ``canonical embedding''. Such an embedding does not always exist, but when it does, it tends to produce defining ideals that are both quadratic and Cohen--Macaulay. 

\begin{remark}\label{rem:notallgraphsaredual}
There are graphs $G$ which cannot be realized as dual graphs of arrangements of projective lines. For example, take the graph 
\[ G = \{12, 13, 14, 15, 16, 23, 24, 26, 35, 36, 45, 46, 56\},\] 
(which is $K_6$ minus two disjoint edges.) An arrangement $C$ of projective lines such that $G(C)=G$ would consist of $6$ projective lines $r_1$, $r_2$, $r_3$, $r_4$, $r_5$ and $r_6$. Let $P$ be the point $r_1 \cap r_2$. The three lines $r_1$, $r_2$ and $r_3$ are pairwise incident. So there are two cases: either $r_3$ passes through $P$, or not.

In the first case, $r_1$, $r_2$ and $r_3$ are not coplanar, because $r_4$ touches two of them but not all of them. So $r_4$ touches $r_1$  and $r_2$ in two points $Q$ and $R$, respectively, which are both different than $P$. Hence $r_1$, $r_2$ and $r_4$ all belong to the same plane. The fifth line $r_5$ cannot belong to such plane, because it does not intersect $r_2$. But $r_5$ meets both $r_1$ and $r_4$. So $r_5$ passes through the point $Q=r_1 \cap r_4$ and intersects $r_3$ in a further point $S$. The five lines of the arrangement are then contained in the union of two planes, determined by the two triangles $PQR$ and $PQS$; at the intersection of the two planes lies the line $r_1$. So we reached a contradiction, because there cannot be a sixth line $r_6$, different than $r_1$, yet incident to all lines of the arrangement.

In the second case, i.e. if $r_3$ does not pass through $P$, the three lines $r_1$, $r_2$ and  $r_3$ must belong to a common plane. The lines $r_4$ and $r_5$ cannot belong to such plane, since $r_4$ does not intersect $r_3$, and $r_5$ does not intersect $r_2$. Hence $r_4$ passes through $P=r_1 \cap r_2$, and $r_5$ passes through $Q':=r_1 \cap r_3$.
Set $R':=r_2 \cap r_3$ and $S':=r_4 \cap r_5$. As before, this five-line arrangements determines two planes, intersecting at the line $r_1$; so there cannot be a sixth line incident to all five lines.

As a consequence, any graph containing the above $G$ as an induced subgraph cannot be realized as the dual graph of arrangements of projective lines.
\end{remark}

Analogously to the proof of Lemma \ref{lem:redtodim2}, one can show that, for any pure simplicial complex $\Delta$, an arrangement of projective lines $C$ obtained by taking general hyperplane sections of the coordinate arrangement defined by $I_{\Delta}$ satisfies $G(C)=G(\Delta)$. Therefore Remark \ref{rem:notallgraphsaredual} implies the following:

\begin{corollary} \label{cor:notallgraphsaredual}
Some graph is not the dual graph of any pure simplicial complex.
\end{corollary}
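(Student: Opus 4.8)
\emph{Proof proposal.}
The plan is to deduce the corollary from Remark \ref{rem:notallgraphsaredual} by way of the hyperplane-section principle stated just before it. Fix $G$ to be the graph of Remark \ref{rem:notallgraphsaredual}, that is, $K_6$ with two disjoint edges removed; we already know that $G$ is not the dual graph of any arrangement of projective lines, and neither is any graph containing it as an induced subgraph. The goal is to show that $G$ is not the dual graph of any pure simplicial complex either, which is exactly the assertion of the corollary.

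I would argue by contradiction. Suppose $\Delta$ is a pure simplicial complex with $G(\Delta) = G$. If $\dim \Delta = 0$, then any two facets $\{i\}$ and $\{j\}$ of $\Delta$ share the empty face, whose dimension $-1$ is one less than the dimension of a facet; hence all facets are pairwise adjacent and $G(\Delta)$ is a complete graph. Since $G$ is not complete, this case is impossible, so we may take $d := \dim \Delta \ge 1$. Let $n$ be the number of vertices of $\Delta$. The coordinate subspace arrangement $X \subset \PP^{n-1}$ defined by $I_\Delta$ then has all irreducible components of projective dimension $d \ge 1$, and $G(X) = G(I_\Delta) = G(\Delta) = G$ by Lemma \ref{lem:combcomm}.

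Now I would apply the reduction recalled immediately before the corollary: performing $d-1$ successive general hyperplane sections on $X$ produces an arrangement of projective lines $C$ whose dual graph still equals $G(X)$, so $G(C) = G$. (When $d = 1$ no section is needed and one takes $C = X$.) This contradicts Remark \ref{rem:notallgraphsaredual}, and therefore no pure simplicial complex can have dual graph $G$, proving the corollary.

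The only step needing justification beyond bookkeeping is that iterated general hyperplane section preserves the dual graph all the way down to an arrangement of lines; this is precisely the content of the sentence preceding the corollary, proved in the same spirit as Lemma \ref{lem:redtodim2}, so in this write-up it is invoked rather than reproved. (Concretely, for a coordinate arrangement it is even elementary: the intersection of two coordinate subspaces $L_F$ and $L_{F'}$ is $L_{F \cap F'}$, and its codimension inside each of $L_F, L_{F'}$ is unchanged by a generic hyperplane section, so adjacencies carry over verbatim.) Handling the degenerate case $d = 0$, as above, is the only remaining point, and it is immediate from $G$ not being complete.
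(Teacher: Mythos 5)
Your proposal is correct and follows essentially the same route as the paper: take the graph $G=K_6$ minus two disjoint edges from Remark \ref{rem:notallgraphsaredual}, realize the dual graph of any putative pure complex as the dual graph of a line arrangement by general hyperplane sections of the coordinate arrangement (the reduction stated just before the corollary, in the spirit of Lemma \ref{lem:redtodim2}), and derive a contradiction. Your explicit treatment of the degenerate case $\dim\Delta=0$ (where the dual graph is complete) is a small point the paper leaves implicit, but the argument is otherwise identical.
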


We now need some algebraic geometry notation; we refer the reader to the standard  textbook by Hartshorne \cite[Chapter II.7]{Hartshorne} for proofs and further details. 

Given an invertible sheaf $\mathcal{L}$ on $C$, if $C$ is a projective curve the $\KK$-vector space $\mathcal{L}(C)$ is finite. Let us consider a basis $s_0, \ldots, s_N$ of $\mathcal{L}(C)$. The elements of  $\mathcal{L}(C)$ are called \emph{global sections}. By \cite[Chapter II, Theorem 7.1]{Hartshorne}, there is a unique morphism $\phi: C \rightarrow \PP^N$ such that $\mathcal{L}$ is isomorphic to the pull-back $\phi^* (\mathcal{O}_{\PP^N}(1))$ and $s_i = \phi^* (x_i)$, where the $x_i$'s are the coordinate functions on $\PP^N$. 
In particular, $\mathcal{L}(C)$ is isomorphic as vector space to $S_1$, where $S=\KK[x_0, \ldots, x_N]$.
The sheaf $\mathcal{L}$ is called \emph{very ample} if this morphism $\phi$ is an immersion. 

If $P$ is an arbitrary point on the curve $C$, we denote by $\mathcal{L}_P$ the stalk of  $\mathcal{L}$ at $P$. By $\mm_P$ we denote the maximal ideal of the local ring $\mathcal{O}_{C,P}$. For any global section $s$ in $\mathcal{L}(C)$, $s_P$ will denote the image of $s$ in the stalk $\mathcal{L}_P$. The zero locus of $s$ is
\[(s)_0=\{P \textrm{ in } C  \textrm{ such that } s_P \in \mm_P \mathcal{L}_P\}.\] 
With the notation above, one can prove the following well-known fact:

\begin{lemma} \label{lem:VeryAmple}
If $\mathcal{L}$ is very ample, $s$ is a global section of $\mathcal{L}$ and $\ell$ is the unique element of  $S_1$ such that $\phi^* (\ell) = s$, then the points of $(s)_0$ correspond to the points of intersection between the curve $C$ and the hyperplane defined by $\ell$. 
\end{lemma}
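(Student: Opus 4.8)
The plan is to reduce the statement to the set-theoretic equality $(s)_0 = \phi^{-1}(H_\ell)$, where $H_\ell \subset \PP^N$ is the hyperplane $\{\ell = 0\}$, and then verify this equality by a local computation in the stalks. Recall that by construction $\mathcal{L} \cong \phi^*(\O_{\PP^N}(1))$ and $s = \phi^*(\ell)$. Once $(s)_0 = \phi^{-1}(H_\ell)$ is known, the fact that $\mathcal{L}$ is very ample — so that $\phi$ is an immersion, in particular injective on points — lets us conclude that $\phi$ restricts to a bijection between $(s)_0$ and $\phi(C) \cap H_\ell$, i.e.\ between the zero locus of $s$ and the points where the line arrangement $C$ meets the hyperplane defined by $\ell$. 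This last step is the only place very ampleness is used; for the rest of the argument $\phi$ can be any morphism.

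First I would fix $P \in C$, set $Q = \phi(P) \in \PP^N$, and recall that the local homomorphism $\phi^{\#}_P : \O_{\PP^N,Q} \to \O_{C,P}$ carries the maximal ideal of $\O_{\PP^N,Q}$ into $\mm_P$, and that the stalk of the pullback is $\mathcal{L}_P \cong \O_{C,P} \otimes_{\O_{\PP^N,Q}} \O_{\PP^N}(1)_Q$, with $s_P$ equal to the image of the germ $\ell_Q$ of $\ell$ at $Q$. Next I would pick a linear form $m \in S_1$ with $m(Q) \neq 0$ (possible since not all linear forms vanish at a point); then $m_Q$ generates $\O_{\PP^N}(1)_Q$ freely over $\O_{\PP^N,Q}$, one has $\ell_Q = (\ell/m)\, m_Q$ with $\ell/m \in \O_{\PP^N,Q}$, and because $m(Q) \neq 0$ the element $\ell/m$ lies in the maximal ideal of $\O_{\PP^N,Q}$ exactly when $\ell(Q) = 0$, that is, when $Q \in H_\ell$.

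Finally I would run the two implications. If $Q \in H_\ell$, then $s_P = \phi^{\#}_P(\ell/m)\cdot(1 \otimes m_Q)$ with $\phi^{\#}_P(\ell/m) \in \mm_P$, so $s_P \in \mm_P \mathcal{L}_P$ and $P \in (s)_0$. If $Q \notin H_\ell$, then $\ell/m$ is a unit of $\O_{\PP^N,Q}$, hence $\phi^{\#}_P(\ell/m)$ is a unit of $\O_{C,P}$, so $s_P$ generates $\mathcal{L}_P$ and therefore cannot lie in $\mm_P \mathcal{L}_P$, giving $P \notin (s)_0$. Combining these over all $P \in C$ yields $(s)_0 = \phi^{-1}(H_\ell)$, and the bijection with $C \cap H_\ell$ follows as above. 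The only slightly delicate point is the bookkeeping around the pullback stalk — the identification $\mathcal{L}_P \cong \O_{C,P}\otimes_{\O_{\PP^N,Q}}\O_{\PP^N}(1)_Q$ and the compatibility of $s_P$ with germ-pullback — which is immediate from the definition of $\phi^{*}$ but is the step a reader is most likely to want to check against \cite[Chapter II]{Hartshorne}; beyond that there is no real obstacle.
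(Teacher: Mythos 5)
Your proof is correct. The paper actually states Lemma \ref{lem:VeryAmple} without proof, as a ``well-known fact'' with a pointer to \cite[Chapter II.7]{Hartshorne}, so there is no argument in the text to compare against; the stalk-by-stalk computation you give — trivializing $\O_{\PP^N}(1)_Q$ by a linear form $m$ with $m(Q)\neq 0$, writing $s_P$ as $\phi^{\#}_P(\ell/m)$ times a generator of $\mathcal{L}_P$, and using that $\phi^{\#}_P$ is a local homomorphism to get $(s)_0=\phi^{-1}(H_\ell)$, with very ampleness supplying injectivity of $\phi$ — is exactly the standard argument that fills this gap, and it is complete.
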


A curve $C$ is called \emph{locally Gorenstein} if all the stalks $\mathcal{O}_{C,P}$, where $P$ ranges over the points of $C$, are Gorenstein rings.  

\begin{lemma} \label{lem:LG}
Any arrangement of projective lines is locally Gorenstein, provided no three lines of the arrangement meet in a common point.
\end{lemma}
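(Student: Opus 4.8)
The plan is to work locally at each point $P$ of the arrangement $C$ and show that the stalk $\mathcal{O}_{C,P}$ is a Gorenstein local ring. The only interesting case is when $P$ lies on more than one line, since at a smooth point the stalk is a regular local ring (hence Gorenstein). By hypothesis, at most two lines pass through any given point, so it suffices to analyze the stalk at a point $P$ where exactly two lines $r_1$ and $r_2$ of the arrangement cross. Since being Gorenstein is a local condition, we may forget the rest of the arrangement and reduce to the following purely local statement: the local ring at the origin of the union of two distinct lines through the origin in affine space is Gorenstein.

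The next step is to make this local ring explicit. After a linear change of coordinates, two distinct lines through the origin of $\mathbb{A}^n$ may be taken to be the coordinate axes in a coordinate plane, so the relevant local ring is the localization at the maximal ideal of $\KK[x,y]/(xy)$ (the extra ambient variables $x_3,\dots,x_n$ are cut out by the linear forms vanishing on both lines and contribute nothing). Thus I must check that $A = \KK[x,y]_{(x,y)}/(xy)$ is Gorenstein. This ring is a one-dimensional complete intersection: it is the quotient of the regular local ring $\KK[x,y]_{(x,y)}$ by the single element $xy$, which is a nonzerodivisor. Since complete intersections are Gorenstein (cf.\ \cite[Proposition 3.1.19(b)]{BrunsHerzog}), we are done. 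Alternatively, one can verify Gorensteinness directly by computing the socle of an Artinian reduction: modding out by the nonzerodivisor $x-y$ gives $\KK[x]/(x^2)$, whose socle is one-dimensional, confirming type $1$.

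The main (and essentially only) obstacle is organizational rather than technical: one must make sure that passing from the global curve $C$ to the local model really does discard all the ambient directions cleanly, i.e.\ that the embedding dimension contributed by the other lines and the surrounding $\PP^N$ does not interfere with the local structure at $P$. This is handled by the observation above that the completion (or localization) of the coordinate ring of a union of two coordinate lines in $\PP^N$ at a crossing point depends only on the two lines through that point, together with the hypothesis that no third line passes through $P$. Once this reduction is in place, the Gorenstein property is immediate from the complete intersection description.
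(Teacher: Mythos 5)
Your proof is correct and follows essentially the same route as the paper's: the paper also argues locally at each point, observing that a point on a single line gives a regular local ring, while a crossing of exactly two lines gives a one-dimensional local ring of embedding dimension $2$ (i.e.\ a plane-curve hypersurface singularity), hence Gorenstein. You simply make the local model explicit as the node $\KK[x,y]_{(x,y)}/(xy)$, which is a harmless elaboration of the same argument.
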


\begin{proof}
If $P$ belongs to one line only, then $\mathcal{O}_{C,P}$ is even a regular ring. Otherwise $\mathcal{O}_{C,P}$ has Krull dimension $1$ and  embedding dimension $2$. In particular, it is Gorenstein. 
  \end{proof}

On a locally Gorenstein curve $C$, one can define another invertible sheaf, called \emph{canonical sheaf} and usually denoted by 
 $\omega_C$. (It coincides with the dualizing sheaf defined in \cite[Chapter III, Section 7]{Hartshorne} for any projective scheme $X$. By definition of Gorenstein ring, the dualizing sheaf is invertible if and only if the scheme is locally Gorenstein.) The \emph{genus} of the curve $C$ is the dimension of the finite vector space $\omega_C (C)$. The genus has a particularly nice interpretation if $C$ is an arrangement of projective lines. 
 
 \begin{proposition}[{Bayer--Eisenbud \cite[Proposition 1.1]{BE}}] \label{prop:BayerEisenbud}
Let $C$ be an arrangement of projective lines. If no three lines of $C$ meet at a common point, then the genus of $C$ equals $t - s + 1$, where $t$ (resp. $s$) is the number of edges (resp. vertices) of $G=G(C)$. 
 \end{proposition}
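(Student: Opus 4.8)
The plan is to compute the genus cohomologically, via Serre duality. Since $C$ is a reduced projective curve it is Cohen--Macaulay, and it is locally Gorenstein by Lemma \ref{lem:LG}, so $\omega_C$ is the invertible dualizing sheaf and the genus of $C$ satisfies $\dim_\KK \omega_C(C) = h^0(C,\omega_C) = h^1(C,\mathcal{O}_C)$. All the quantities involved (genus, $s$, $t$) are unchanged under field extension, so I may first assume $\KK$ algebraically closed. Then it suffices to prove $\chi(\mathcal{O}_C) = h^0(\mathcal{O}_C) - h^1(\mathcal{O}_C) = s - t$: indeed $C$ is connected and reduced over an algebraically closed field (connectedness being implicit in the statement, as $G(C)$ is connected), so $h^0(C,\mathcal{O}_C) = 1$, and therefore the genus equals $h^1(\mathcal{O}_C) = 1 - \chi(\mathcal{O}_C) = t - s + 1$. (In general one would get $t-s+c$ with $c$ the number of connected components, which is $b_1(G(C))$.)

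To compute $\chi(\mathcal{O}_C)$ I would pass to the normalization $\nu\colon \widetilde{C} \to C$. Since $C$ is an arrangement of $s$ lines, $\widetilde{C}$ is the disjoint union $L_1 \sqcup \cdots \sqcup L_s$ of $s$ copies of $\PP^1$. The hypothesis that no three lines are concurrent, together with the elementary fact that two distinct lines in projective space meet in at most one point, guarantees that the singular locus of $C$ is a set of exactly $t$ points, one for each edge of $G(C)$, all distinct, and that at each of them $C$ has two smooth branches spanning a plane — i.e.\ an ordinary node, of embedding dimension $2$ as in the proof of Lemma \ref{lem:LG}. Hence the conductor exact sequence takes the form
\[
0 \longrightarrow \mathcal{O}_C \longrightarrow \nu_*\mathcal{O}_{\widetilde{C}} \longrightarrow \bigoplus_{P \in \operatorname{Sing}(C)} \kappa(P) \longrightarrow 0,
\]
since the $\delta$-invariant of a node equals $1$, so the cokernel is supported on the $t$ nodes with length $1$ at each.

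Taking Euler characteristics and using $\chi(\mathcal{O}_{\PP^1}) = 1$ gives
\[
\chi(\mathcal{O}_C) \;=\; \sum_{i=1}^{s} \chi(\mathcal{O}_{L_i}) \;-\; t \;=\; s - t,
\]
and combined with the reduction of the first paragraph this yields that the genus of $C$ equals $t - s + 1$.

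The step requiring the most care is the local analysis at the singular points: one must verify that ``no three lines through a point'' is precisely what forces every singularity of $C$ to be a node with $\delta_P = 1$, and that distinct edges of $G(C)$ produce distinct singular points. If three lines were allowed to pass through a point $P$, the local ring $\mathcal{O}_{C,P}$ would have $\delta_P > 1$, and both the count of singular points and the resulting formula would change. Everything else — exactness of the conductor sequence, additivity of Euler characteristic, Serre duality on a projective locally Gorenstein curve, and the equality $h^0(\mathcal{O}_C)=1$ for a connected reduced projective curve over an algebraically closed field — is standard, see e.g.\ \cite{Hartshorne}.
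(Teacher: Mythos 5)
Your proof is correct, and it is essentially the standard argument behind the cited result of Bayer--Eisenbud \cite{BE}: the paper itself gives no proof of this proposition, only the citation, and the normalization/conductor computation of $\chi(\mathcal{O}_C)$ you carry out (with the ``no three concurrent lines'' hypothesis guaranteeing that every singular point is a node with $\delta_P=1$, in bijection with the edges of $G(C)$) is exactly how this genus formula is established. You are also right to flag that one needs $C$ connected to get $h^0(\mathcal{O}_C)=1$ and hence $t-s+1$ rather than $t-s+c$; this is implicit in the statement (as in \cite{BE}, where graph curves are connected by definition) and your parenthetical covers it.
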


 When the canonical sheaf is very ample, it defines (as we saw for $\mathcal{L}$) an immersion $\phi': C \hookrightarrow \PP^N$, which is usually called the \emph{canonical embedding}. With slight abuse of notation, we use the expression ``$C$ canonically embedded'' to denote the image $\phi'(C) \subset \PP^N$. It is well known that canonical embeddings play a central role in the theory of nonsingular curves: If the genus of the curve is at least 3, typically $\omega_C$ is very ample and the corresponding ideal is quadratic and Cohen-Macaulay (compare \cite[Chapter 9]{Ei}). For the purposes of the present paper this is not interesting, since (connected) nonsingular curves are irreducible. However, a similar philosophy holds also for reducible curves (see \cite{BE}).

\begin{lemma}\label{lem:ampleness}
Let $C$ be an arrangement of projective lines, in which no three lines  meet at a common point. 
If the canonical sheaf $\omega_C$ is very ample, then $G(C)$ is 3-edge-connected.
\end{lemma}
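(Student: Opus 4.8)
The plan is to translate the very ampleness of $\omega_C$ into a statement about cycles and cuts in $G=G(C)$, exploiting the classical description of the canonical sheaf of a ``graph curve''. Write $s$ (resp.\ $t$) for the number of vertices (resp.\ edges) of $G$, and for a vertex $v$ let $\ell_v\subset C$ be the corresponding line. Since no three lines of $C$ are concurrent, $\ell_v$ meets the remaining lines in exactly $\deg_G(v)$ distinct nodes, and adjunction for the subcurve $\ell_v\subset C$ gives $\omega_C|_{\ell_v}\cong\O_{\PP^1}(\deg_G(v)-2)$. Globally, the normalization of $C$ is a disjoint union of projective lines, one per vertex, and a global section of $\omega_C$ amounts to a tuple of rational $1$-forms on these lines, each with at most simple poles at the preimages of the nodes, whose residues on the two branches of every node are opposite. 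As a form on $\PP^1$ with at most simple poles is determined by --- and realizes any --- choice of residues summing to zero, one gets a natural isomorphism $H^0(C,\omega_C)\cong Z(G)$, the cycle space of $G$ over $\KK$, under which a section corresponds to the tuple $(a_e)_e$ of its residues at the nodes, and such a section vanishes at the node $P_e=\ell_v\cap\ell_w$ (where $e=\{v,w\}$) precisely when $a_e=0$. This is standard (cf.\ \cite{BE} and the theory of Rosenlicht differentials); I will assume $C$ connected, which is the case relevant to Theorem \ref{thm:LinesArrangement}, so that by Proposition \ref{prop:BayerEisenbud} the complete linear system of $\omega_C$ defines a closed immersion $\phi\colon C\hookrightarrow\PP^{g-1}$, with $g=t-s+1$.

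I would then record the consequences of very ampleness that the argument needs. Restricting the closed immersion $\phi$ to each line shows $\omega_C|_{\ell_v}$ is very ample on $\ell_v\cong\PP^1$, so $\deg_G(v)\ge 3$ for every $v$; in particular $G$ has at least four vertices. Also $\omega_C$ is base-point-free, and for distinct points $P\ne P'$ of $C$ there is a section of $\omega_C$ vanishing at $P$ but not at $P'$ (because $\phi$ is injective). By Menger's theorem (Theorem \ref{thm:menger}(ii)) it now suffices to show that $G$ is not disconnected by the removal of at most two edges.

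Suppose a set $F$ of edges with $1\le|F|\le 2$ disconnects $G$ into nonempty parts $A,B$. If $F=\{e\}$, then $e$ is a bridge and hence lies in no cycle, so $a_e=0$ for every element of $Z(G)$; under the dictionary above this says that every section of $\omega_C$ vanishes at the node $P_e$, contradicting base-point-freeness. If $F=\{e_1,e_2\}$, then $F$ is an edge cut, so every element of the cycle space is orthogonal (with the signs coming from an orientation of the edges) to the indicator vector of $F$; thus $a_{e_1}$ is a fixed nonzero multiple of $a_{e_2}$ for every cycle, and in particular $a_{e_1}=0\iff a_{e_2}=0$. Translating: a section of $\omega_C$ vanishes at $P_{e_1}$ if and only if it vanishes at $P_{e_2}$, so the hyperplanes of $\PP^{g-1}$ through $\phi(P_{e_1})$ are exactly those through $\phi(P_{e_2})$, forcing $\phi(P_{e_1})=\phi(P_{e_2})$. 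But $P_{e_1}\ne P_{e_2}$, since otherwise at least three of the lines indexed by the endpoints of $e_1$ and $e_2$ would be concurrent. This contradicts the injectivity of $\phi$.

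The step I expect to be the real work is the first one: setting up the identification of $H^0(\omega_C)$ with the cycle space $Z(G)$ and, crucially, the fact that a section vanishes at a node exactly when the corresponding cycle-coordinate vanishes. Once this is in place the argument is purely combinatorial and rests on the elementary orthogonality between the cycle space and the cut space of a graph, which is precisely what forbids bridges and $2$-edge-cuts. A minor point to be careful about is the connectedness of $C$, which I would either keep as a standing assumption for this section or deduce from the way canonically embedded reducible curves enter Theorem \ref{thm:LinesArrangement}.
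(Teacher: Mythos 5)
Your proof is correct and follows essentially the same route as the paper: both arguments reduce $3$-edge-connectivity to the fact that the two nodes corresponding to a disconnecting pair of edges cannot be separated by global sections of $\omega_C$, which contradicts very ampleness. The only difference is that the paper simply cites \cite[Proposition 2.3]{BE} for the key statement that the sections vanishing at both such nodes form a subspace of codimension $1$ (rather than $2$) in $\omega_C(C)$, whereas you rederive this from the cycle-space description of $H^0(C,\omega_C)$ and treat bridges separately via base-point-freeness; your observation that connectedness of $C$ should be a standing assumption matches the paper's implicit convention.
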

\begin{proof}
First of all, the existence of $\omega_C$ is guaranteed by Lemma \ref{lem:LG}. 
By contradiction, we can find two distinct edges in the graph $G(C)$ whose removal disconnects it. Let $P, Q$ be the two points on the curve $C$ corresponding to these two edges. 
Let us consider the subspace of $\omega_C(C)$
\[W= \{ 
s \in \omega_C(C) \textrm{ such that } (s)_0 \textrm{ contains both } P \textrm{ and } Q
\}.\]
\noindent By \cite[Proposition 2.3]{BE}, $W$ has codimension $1$ in $\omega_C(C)$.  Now we use the assumption that $\omega_C$ is very ample, or in other words, that the morphism $\phi': C \hookrightarrow \PP^{N}$ is an immersion. Let $V$ be the $\KK$-vector space formed by the linear forms of $S=\KK[x_0, \ldots, x_N]$ that vanish on both $P$ and $Q$. By Lemma \ref{lem:VeryAmple},
$W$ is isomorphic as vector space  to $V$. However,  $V$ has codimension $2$ in $S_1$. But $S_1$  is isomorphic to $\omega_C(C)$, in which $W$ has codimension~$1$: A contradiction. 
\end{proof}

Recall that a height-unmixed ideal $I$ is Hirsch if the diameter of $G(I)$ is $\le \height(I)$.

\begin{theorem}\label{thm:LinesArrangement}
Let $C\subset \PP^{N}$ be an arrangement of lines no three of which meet at a common point. If $C$ is canonically embedded, then its defining ideal $I$ is Hirsch.
\end{theorem}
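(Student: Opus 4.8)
The plan is to combine the edge-connectivity input from Lemma~\ref{lem:ampleness} with the metric bound from Lemma~\ref{lem:menger}(b), after first pinning down the two basic numerical invariants of the canonical embedding: the ambient projective dimension $N$ and the codimension $\codim_{\PP^N} C$. First I would observe that since $C$ is canonically embedded, $S_1 \cong \omega_C(C)$, so $N+1$ equals the genus $g$ of $C$; by Proposition~\ref{prop:BayerEisenbud}, $g = t - s + 1$, where $s$ and $t$ are the numbers of vertices and edges of $G = G(C)$. Hence $N = t - s$. Since $C$ is a one-dimensional scheme in $\PP^N$, its codimension is $\codim_{\PP^N} C = N - 1 = t - s - 1$. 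So the target inequality $\diam G(I) \le \codim_{\PP^N} C$ becomes exactly $\diam G \le t - s - 1$.

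Next I would feed this into the folklore lemma. By Lemma~\ref{lem:ampleness}, $G(C)$ is $3$-edge-connected (here one must note that $\omega_C$ is automatically very ample because $C$ is \emph{given} as canonically embedded, i.e.\ the canonical morphism is an immersion). Lemma~\ref{lem:menger}(b) with $k = 3$ then gives $\diam G \le \lfloor t/3 \rfloor$. This is not yet quite what we want, so the remaining arithmetic step is to check that $\lfloor t/3 \rfloor \le t - s - 1$, or equivalently that $t - \lfloor t/3 \rfloor \ge s + 1$, i.e.\ $\lceil 2t/3 \rceil \ge s + 1$. Since $3$-edge-connectedness forces every vertex to have degree $\ge 3$, we have $2t \ge 3s$, hence $2t/3 \ge s$; this gives $\lceil 2t/3 \rceil \ge s$, which is off by one. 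To close the gap I would argue that a $3$-edge-connected graph on $s \ge 4$ vertices cannot be $3$-regular \emph{and} have $\diam G$ meeting the bound $\lfloor t/3\rfloor$ simultaneously, or more cleanly: if $2t = 3s$ exactly (the $3$-regular case) one refines Lemma~\ref{lem:menger}(b) slightly — a shortest path between the two extremal vertices $x,y$ uses $\ge 3$ edge-disjoint paths, but the $d-1$ interior vertices of each, plus the fact that edges incident to $x$ and $y$ are not all on these paths, yields a strictly better count; alternatively one invokes the known fact that a connected graph on $s$ vertices with all degrees $\ge 3$ has diameter $\le (s-2)/3 + 1 \le t-s-1$ for $s$ large, and handles small $s$ by hand.

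The main obstacle I expect is precisely this last off-by-one reconciliation between the edge-connectivity diameter bound $\lfloor t/3\rfloor$ and the codimension $t-s-1$: the crude application of Lemma~\ref{lem:menger}(b) is not tight enough, so one needs either a sharper counting argument tailored to $3$-edge-connected graphs (exploiting that the $3$ edge-disjoint $x$--$y$ paths together with edges at $x$ and $y$ overcount edges, or that in a $3$-regular $3$-edge-connected graph the diameter is comfortably below $t-s-1$) or a direct combinatorial/algebraic estimate. A clean route would be: $3$-edge-connected implies $t \ge \tfrac{3s}{2}$, so $t - s - 1 \ge \tfrac{s}{2} - 1 \ge \tfrac{t}{3} - 1 + \text{(slack)}$, and then dispatch the finitely many tight cases ($s \le 4$, say $G = K_4$) by inspection, where $\diam G = 1$ and $\codim = t - s - 1 = 6 - 4 - 1 = 1$ checks out. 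I would also double-check the degenerate cases where $G$ is a single point or a segment (then $C$ is irreducible or two incident lines, the genus is $0$ or $1$, the canonical sheaf is not very ample, and the hypothesis is vacuous or trivial), so that the genuine content lives in $g \ge 3$, i.e.\ $t - s + 1 \ge 3$.
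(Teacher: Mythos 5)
Your setup coincides with the paper's: $N=g-1$, $\codim_{\PP^N}C=g-2=t-s-1$ via Proposition~\ref{prop:BayerEisenbud}, then $3$-edge-connectivity from Lemma~\ref{lem:ampleness}, Lemma~\ref{lem:menger}(b), and the inequality $2t\ge 3s$. You have also correctly located the crux: when $s<2t/3$ strictly, integrality of $\height I$ absorbs the off-by-one (from $t-s-1>t/3-1$ one gets $\height I\ge\lfloor t/3\rfloor$), and the only problematic case is the tight one $2t=3s$, i.e.\ $G$ trivalent. Up to there your argument matches the paper's proof step for step.

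The gap is in how you close that tight case, and both of your proposed fixes are flawed. The ``known fact'' that a connected graph with all degrees $\ge 3$ has diameter $\le (s-2)/3+1$ is false: take a long ladder (prism over a path) with its two end squares triangulated so every vertex has degree $\ge 3$; its diameter grows like $s/2$, not $s/3$. Minimum degree $3$ is not enough --- you need genuine $3$-\emph{vertex}-connectivity to invoke Lemma~\ref{lem:menger}(a). Your refined edge count also rests on the claim that ``edges incident to $x$ and $y$ are not all on these paths,'' which is exactly wrong in the trivalent case: all three edges at $x$ (and at $y$) are used by the three edge-disjoint paths. The ingredient you are missing is that a trivalent $3$-edge-connected graph is automatically $3$-vertex-connected (this is \cite[Lemma 2.6]{BE}; concretely, an interior vertex shared by two edge-disjoint paths would need degree $\ge 4$, so the three paths are internally vertex-disjoint). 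With that, Lemma~\ref{lem:menger}(a) gives $\diam G\le\lfloor(s-2)/3\rfloor+1$, and the arithmetic $\lfloor(s-2)/3\rfloor+1\le(s-2)/2=t-s-1$ for even $s\ge 4$ finishes the argument, with the base case $s=4$, $G=K_4$ checking out as you noted.
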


\begin{proof}
First of all, notice that $N=g-1$ where $g$ is the genus of $C$.
Let $s$ (resp. $t$) be the number of vertices (resp. edges) of the graph $G(C)$. The ideal $I$ has height $g-2$, where $g$ is the genus of the curve. By Proposition \ref{prop:BayerEisenbud}, $g = t - s + 1$, and by Lemma \ref{lem:ampleness} $G$ is $3$-edge-connected. In particular, every vertex of $G$ lies in at least $3$ edges and $s\geq 4$, which implies $2 t \ge 3 s$. If $s < 2t/3$, then
\[\height I = g-2=t-s-1 >  t/3 -1,\]
which, since $\height I$ is an integer, implies $\height I\geq \lfloor t/3\rfloor$. Now Lemma \ref{lem:menger} (b) implies $\diam G\leq \height I$. 

If $2t=3s$, then $G$ is trivalent, that is: Each vertex of $G$ lies in exactly 3 edges. A 3-edge connected trivalent graph is also 3-connected by \cite[Lemma 2.6]{BE}, so Lemma \ref{lem:menger} (a) and the fact that $s\geq 4$ let us conclude because:
\[\height I = g-2=t-s-1 =  s/2 -1 = (s-2)/2\geq \lfloor (s-2)/3\rfloor - 1. \qedhere\]

%
\end{proof}

\section{Further examples of Hirsch and non-Hirsch ideals} \label{sec:examples}

In this section we prove the Hirsch property for a few cases, including all  ideals of small height or regularity.

\begin{proposition}\label{prop:easy}
The following homogeneous ideals of $S=\KK[x_1, \ldots, x_n]$ are Hirsch:
\begin{compactenum}[\rm (i)]
\item prime ideals;
\item ideals corresponding to finite sets of points;
\item ideals of height $1$ (that is, hypersurfaces);
\item ideals such that $S/I$ is Cohen--Macaulay of regularity $1$;
\item height-unmixed ideals in a polynomial ring with $n\le 3$ variables.
\end{compactenum}
\end{proposition}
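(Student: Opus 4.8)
The plan is to verify the Hirsch property item by item, with several of them being essentially trivial once the right earlier result is invoked. For (i), if $I$ is prime then $G(I)$ is a single vertex, so $\diam G(I) = 0 \le \height I$ automatically. For (ii), an ideal corresponding to a finite set of points has height $n$, hence dimension $0$; the number of vertices of $G(I)$ is at most $e(S/I)$ by Lemma \ref{lem:generalbound}, but more to the point, no two of the minimal primes (each of height $n$) can intersect in codimension one, so $G(I)$ has no edges at all and $\diam G(I)$ is either $0$ (one point) or $\infty$. Wait --- I need to be careful: if there is more than one point the graph is disconnected, so the ``diameter'' is infinite and the claim would fail; so presumably the intended reading is that $G(I)$ is just a disjoint union of isolated vertices and one adopts the convention $\diam \le \height$ holds vacuously, or the statement restricts to connected $G(I)$. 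I would clarify this by noting that for a $0$-dimensional scheme every pair of distinct components meets in codimension $\ge 1 = \height I - (n-1)$... actually the cleanest route is: $\diam G(I) \le s - 1$ where $s = |\Min(I)| \le e(S/I)$, and for points in ``general enough'' position or under the relevant convention this is $\le n = \height I$; I expect the author simply observes $s-1 \le e(S/I) - 1 < n$ fails in general, so the real content must be that such $G(I)$ has no edges and the statement is interpreted so that it holds.

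For (iii), height-one ideals: a height-unmixed ideal of height $1$ in the UFD $S$ is principal, $I = (f)$, and its minimal primes are the principal primes $(p_i)$ for the irreducible factors $p_i$ of $f$; any two such primes satisfy $\height((p_i) + (p_j)) = 2 = \height I + 1$, so $G(I)$ is a complete graph, hence $\diam G(I) \le 1 = \height I$. For (iv), if $S/I$ is Cohen--Macaulay of regularity $1$, then by Lemma \ref{lem:deghreg} the $h$-polynomial has degree $1$, so $e(S/I) = h(1) \le 1 + \height I$ (writing $h(t) = 1 + (e-1)t$ and using that the leading and constant coefficients are nonnegative with constant term $1$); actually I would instead invoke Lemma \ref{lem:regularity}(ii) with $r = 1$, giving $s \le \sum_{i=0}^1 \binom{c+i-1}{i} = 1 + c$ where $c = \height I$, hence $\diam G(I) \le s - 1 \le c = \height I$. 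For (v), when $n \le 3$: if $\height I = n$ we are in case (ii); if $\height I = 1$ we are in case (iii); if $\height I = 2$ and $n = 3$, then $\dim S/I = 1$ so $\Proj(S/I)$ is a finite set of points and $G(I)$ has no edges (any two distinct minimal primes of height $2$ in $\KK[x_1,x_2,x_3]$ have sum of height $3 = n > \height I + 1$), so again $\diam G(I) = 0$ whenever $G(I)$ is connected; and $\height I$ cannot be $0$ since $I$ is assumed unmixed of positive height (or if $I = 0$, $G$ is a point).

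The main obstacle, and the only place where genuine care is needed, is item (iv): one must extract from ``regularity $1$'' a sharp enough bound on the number of minimal primes. The cleanest tool is Lemma \ref{lem:regularity}(ii), which directly gives $s \le 1 + \height I$, and then Lemma \ref{lem:menger}'s preamble bound $\diam G(I) \le s - 1$ finishes it; I would present (iv) that way rather than chasing $h$-polynomial coefficients. Everything else reduces to the observation that in low dimension or low height the dual graph is either a point, a complete graph, or edgeless, for which the Hirsch inequality is immediate.
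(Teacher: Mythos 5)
Your items (i), (iii) and (iv) match the paper's proof: (i) is a single vertex, (iii) gives a complete graph via $\height(\pp_i+\pp_j)\le\height\pp_i+\height\pp_j=2$, and (iv) is exactly the paper's argument, combining $\diam G(I)\le s-1$ with Lemma \ref{lem:regularity}(ii) for $r=1$ to get $s\le\height(I)+1$. But your treatment of (ii) --- and consequently of the height-$2$ case of (v) --- contains a genuine error. For a finite set of distinct points of $\PP^{n-1}$, each minimal prime $\pp_i$ is a homogeneous prime of height $n-1$ (not $n$), so $\height I=n-1$, and for $i\ne j$ the sum $\pp_i+\pp_j$ cuts out the empty set in $\PP^{n-1}$, i.e.\ it is $\mm$-primary of height $n$. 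Hence $\height(\pp_i+\pp_j)-1=n-1=\height I$, which is precisely the adjacency condition: \emph{every} pair of vertices is joined by an edge. So $G(I)$ is the complete graph on $s$ vertices and $\diam G(I)=1\le n-1=\height I$ (for $s\ge 2$, which forces $n\ge 2$). There is no disconnected, edgeless graph and no convention to invoke; your hedging there stems from miscounting the height of a point's ideal. The same miscount reappears in (v): for $n=3$ and $\height I=2$ you claim $\height(\pp_i+\pp_j)=3>\height I+1$, but $\height I+1=3$, so equality holds and the vertices are adjacent --- again a complete graph. The paper's (v) is simply the reduction ``height $1$, $2$, $3$ $\Rightarrow$ parts (iii), (ii), (i) respectively,'' which works once (ii) is done correctly.

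With (ii) repaired as above, the rest of your plan goes through and coincides with the paper's proof; no alternative route is actually being taken.
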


\begin{proof}
\begin{compactenum}[(i)]
\item $G(I)$ is a single point. 
\item In this case, $G(I)$ is the complete graph on $s$ vertices. So $\diam G(I) = 1 \le \height I$.
\item For any two primes $\pp_i$, $\pp_j$ of $S$, one has $\height (\pp_i + \pp_j) \le \height \pp_i + \height \pp_j$. So if $\height(I)=1$, for any two different minimal primes $\pp_i, \pp_j$ of $I$ we have $\height (\pp_i) = \height(\pp_j) = 1$ and $\height (\pp_i + \pp_j) = 2$. So $G(I)$ is the complete graph, as above. 
\item Being $G(I)$ connected, $\diam G(I) \le s - 1$, where $s$ is the number of vertices of $G(I)$; but by Lemma \ref{lem:regularity}, part (ii), we have $s \le \height(I) + 1$. 
\item Let $I\subset \KK[x_1,x_2,x_3]$. If the height of  $I$ is 1 resp. 2 resp. 3,  we conclude by part (iii) resp. (ii) resp. (i). {$\qedhere$}
\end{compactenum}
\end{proof}

However, it is easy to find non-Hirsch ideals in a polynomial ring with four or more variables:
\begin{example} \label{ex:nonHirsch}
The dual graph of the ideal 
\[I=(x_1,x_2)\cap (x_2,x_3)\cap (x_3,x_4)\cap (x_4,x_1+x_3)\]
is a path of three edges, hence has diameter $3$. Since $\height(I)=2$, $I$ is \emph{not} Hirsch. 
\end{example}

Note that $x_1x_3x_4$ is a minimal degree-3 generator for $I$, so $I$ is not generated by quadrics. 
In fact, height-$2$ (unmixed) ideals generated by quadrics are all Hirsch:

\begin{proposition}
Let $I\subset S$ be a height-unmixed ideal of height $c\geq 2$. If all the minimal generators of $I$ have degree $\leq d$ and $G(I)$ is connected, then $\diam G(I)\leq d^c-2$. 
\end{proposition}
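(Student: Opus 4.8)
The plan is to bootstrap the trivial estimate into the sharp one by a careful analysis of when equality can hold. Write $c=\height I$ and let $s$ be the number of vertices of $G(I)$. By Lemma~\ref{lem:generalbound} and Lemma~\ref{lem:regularity}(i) we have $s\le e(S/I)\le d^c$, and since $G(I)$ is connected this already gives $\diam G(I)\le s-1\le d^c-1$; so I only need to rule out $\diam G(I)=d^c-1$. (One may assume $d\ge 2$: if $d=1$ then $I$ is generated by linear forms, hence prime, and $G(I)$ is a single point.) Suppose then $\diam G(I)=d^c-1$. Squeezing the chain $d^c-1\le \diam G(I)\le s-1\le d^c-1$ forces $s=e(S/I)=d^c$ and $\diam G(I)=s-1$; by the elementary fact recalled just after Lemma~\ref{lem:menger}, the latter means $G(I)$ is a path on $d^c$ vertices.

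Next I would extract structure from $e(S/I)=s$. The additivity formula~\eqref{eq:addgen} writes $e(S/I)$ as a sum of $s=\#\Min(I)$ positive integers $\dim_\KK(S/I)_\pp\cdot e(S/\pp)$; if this sum equals $s$, each term equals $1$, so in particular $e(S/\pp)=1$ for every minimal prime $\pp$ of $I$, i.e.\ every such $\pp$ is generated by linear forms (Remark~\ref{rem:vgsa}). Since ideals generated by linear forms stay prime under field extension, and the heights of the $\pp_i$ and of the sums $\pp_i+\pp_j$ are unchanged by extending scalars, the graph $G(I)$ is unaffected, and we may assume $\KK$ infinite.

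Now I would bring in a complete intersection exactly as in the proof of Lemma~\ref{lem:regularity}(i): choose a regular sequence $f_1,\dots,f_c$ of degree-$d$ forms with $J:=(f_1,\dots,f_c)\subseteq I$, so that $d^c=e(S/J)\ge e(S/I)=d^c$ and equality holds. Since $J$ is a complete intersection it is unmixed, and every minimal prime of $I$ (being of height $c=\height J$) is minimal over $J$, so $\Min(I)\subseteq\Min(J)$. Applying~\eqref{eq:addgen} to $J$, the sum of the $\ge d^c$ positive integers $\dim_\KK(S/J)_\qq\cdot e(S/\qq)$ equals $d^c$, which forces $\Min(J)=\Min(I)$ and $\dim_\KK(S/J)_\qq=e(S/\qq)=1$ for every $\qq\in\Min(J)$. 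The condition $\dim_\KK(S/J)_\qq=1$ for all $\qq$, together with unmixedness, means every primary component of $J$ equals the corresponding prime, i.e.\ $J$ is radical, whence $J=\bigcap_{\qq\in\Min(J)}\qq=\bigcap_{\pp\in\Min(I)}\pp=\sqrt{I}$; as $J\subseteq I\subseteq\sqrt{I}=J$, we conclude $I=J$. Thus $I$ is itself a radical complete intersection — so $S/I$ is Gorenstein — with all minimal primes linear, i.e.\ $I$ defines a subspace arrangement. By Corollary~\ref{cor:Weak}, $G(I)$ must be a point, a segment, or a $2$-connected graph; but $G(I)$ is a path on $d^c\ge 4$ vertices, which is none of these. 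This contradiction proves $\diam G(I)\le d^c-2$.

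I expect the crux to be the second and third steps taken together: converting the numerical coincidence $s=e(S/I)=d^c$ into the rigid structural statement that $I$ is a radical complete intersection generated in degree $d$ (which is exactly what puts the $2$-connectivity of Corollary~\ref{cor:Weak} at our disposal to contradict ``path''), and checking that the reduction to an infinite base field costs nothing — this last point being legitimate precisely because the same coincidence forces the minimal primes to be linear.
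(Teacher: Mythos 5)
Your proof is correct and follows the same skeleton as the paper's: bound the number of vertices by $d^c$ via Lemma \ref{lem:generalbound} and Lemma \ref{lem:regularity}(i), observe that the bound can only fail if $G(I)$ is a path on $d^c$ vertices, show that this numerical coincidence forces $I$ to be a (radical) complete intersection defining a subspace arrangement, and then contradict ``path'' with a connectivity result. The two differences are worth noting. First, the paper simply \emph{asserts} that in the extremal case ``$I$ would be a complete intersection of degree-$k$ polynomials defining a subspace arrangement,'' whereas you actually prove it: your two-stage use of the additivity formula \eqref{eq:addgen} --- first on $I$ to force $e(S/\pp)=1$ for all minimal primes, then on the auxiliary complete intersection $J\subseteq I$ to force $\Min(J)=\Min(I)$, $J$ radical, and finally $I=J$ --- is exactly the justification the paper omits, and it is the genuinely nontrivial content of the argument. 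Second, at the final step the paper invokes Corollary \ref{cor:CIdc-cConnected} (hence the full strength of Theorem \ref{thm:GorensteinRConnected}) to get $c$-connectivity and then applies Lemma \ref{lem:menger}, while you only need the $2$-connectivity of Corollary \ref{cor:Weak}, which rests on the much lighter Proposition \ref{prop:2-conngor}; since a path on $d^c\ge 4$ vertices has leaves, $2$-connectivity already gives the contradiction. Your route is therefore slightly more economical in its dependencies while being more complete in its details; both are valid.
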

\begin{proof}
If $d=1$, this is obvious, so we can assume $d\geq 2$. Notice that, since $G(I)$ is connected, $I$ is height-unmixed. Therefore the number of vertices of $G=G(I)$ is mostly $d^c$ by Lemma \ref{lem:regularity}. So the only case in which the bound in the statement could fail is if $G$ was a path on $d^c$ vertices. In such a case, however, $I$ would be a complete intersection of degree-$k$ polynomials defining a subspace arrangement, so $G$ would be $c$-connected by Corollary \ref{cor:CIdc-cConnected}. We thus conclude by Lemma \ref{lem:menger}.
\end{proof}

\begin{corollary}
\label{cor:height2}
Let $I$ be a height-$2$ ideal, generated by quadrics. If  $S/I$ is Cohen--Macaulay, then $I$ is Hirsch.
\end{corollary}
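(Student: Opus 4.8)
The plan is to obtain this as an immediate consequence of the preceding Proposition, so that essentially no new argument is required. First I would observe that, since $S/I$ is Cohen--Macaulay, the ideal $I$ is automatically height-unmixed, and by Hartshorne's connectedness theorem the dual graph $G(I)$ is connected. Consequently the hypotheses of the preceding Proposition are satisfied with $c=\height I=2$ and with $d=2$, the latter because $I$ is generated by quadrics. The Proposition then gives $\diam G(I)\le d^{c}-2=2^{2}-2=2=\height I$, which is precisely the Hirsch inequality; hence $I$ is Hirsch.

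It is worth spelling out why the bound is tight enough in this range, since that is where whatever content there is actually sits. The crude estimate is that $G(I)$ is a connected graph on at most $d^{c}=4$ vertices (by Lemma \ref{lem:regularity}(i) together with Lemma \ref{lem:generalbound}), and a connected graph on at most four vertices has diameter at most $3$; but $3>\height I=2$, so the naive bound is not quite good enough. The only configuration that could violate the Hirsch inequality is therefore a path on exactly four vertices. In that extremal case $G(I)$ has four vertices and multiplicity $e(S/I)=4=d^{c}$, which forces $I$ to be a complete intersection of two quadrics defining a subspace arrangement; but then Corollary \ref{cor:CIdc-cConnected} shows $G(I)$ is $(d-1)c=2$-connected, hence not a path. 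This contradiction is exactly the step carried out in the proof of the preceding Proposition, and it ultimately rests on Theorem \ref{thm:GorensteinRConnected} (through Corollary \ref{cor:CIdc-cConnected}).

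I do not expect any genuine obstacle here: once the preceding Proposition is available, the whole argument is the choice of parameters $c=d=2$ and the arithmetic $2^{2}-2=2$, so I would keep the written proof to a single line. If a self-contained statement were preferred, one would simply inline the rigidity step above, excluding the extremal path on four vertices by invoking the connectivity bound for complete intersections defining subspace arrangements.
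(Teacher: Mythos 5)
Your proposal is correct and is exactly the paper's (implicit) argument: the corollary is stated as an immediate consequence of the preceding Proposition with $c=d=2$, using that Cohen--Macaulayness gives unmixedness and (via Hartshorne) connectedness of $G(I)$, so that $\diam G(I)\le 2^2-2=2=\height I$. Your additional unpacking of the extremal four-vertex path case faithfully reproduces the Proposition's own proof, so nothing is missing.
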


\begin{proposition} \label{prop:reg2}
If $S/I$ is Gorenstein of regularity $2$, then $I$ is Hirsch.
\end{proposition}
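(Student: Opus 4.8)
The plan is to reduce to the case where $\Proj(S/I)$ is a curve and then exploit the Gorenstein hypothesis together with the very rigid numerology forced by $\reg(S/I)=2$. First, as in the proof of Theorem \ref{thm:GorensteinRConnected}, I would pass to the algebraic closure of $\KK$ and use Lemma \ref{lem:redtodim2} (or rather the fact that regularity and the dual graph are preserved under quotienting by a general linear form) to assume $\dim(S/I)\le 2$; if $\dim(S/I)\le 1$ then $G(I)$ is a point or a finite set of points (the complete graph), so $I$ is Hirsch by Proposition \ref{prop:easy}, and we may assume $\dim(S/I)=2$, i.e.\ $C=\Proj(S/I)$ is a projective curve. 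Since $S/I$ is Gorenstein of regularity $2$, Lemma \ref{lem:deghreg} tells us that the $h$-polynomial of $S/I$ has degree $2$, and Gorensteinness forces the $h$-vector $(h_0,h_1,\dots,h_r)$ to be symmetric; with $r=2$ this means $h=(1,h_1,1)$, so the multiplicity is $e(S/I)=h_1+2$. By Lemma \ref{lem:generalbound} the number $s$ of vertices of $G(I)$ satisfies $s\le h_1+2$.

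Next I would bound the height. With $\dim(S/I)=2$ and $c=\height I$ we have $n=c+2$, and after a general Artinian reduction $S/I$ looks like a Gorenstein Artinian quotient of $\KK[x_1,\dots,x_c]$ of socle degree $2$; its $h$-vector is $(1,c,1)$ by the structure of such algebras (the degree-$1$ part is all of the $c$-dimensional space of linear forms, since a Gorenstein Artinian algebra of socle degree $2$ is determined by a single quadric and has $h_1$ equal to its embedding dimension when it is ``non-degenerate'', and we may assume non-degeneracy after a coordinate change). Hence $h_1=c$ and $e(S/I)=c+2$, so $s\le c+2$. Since $S/I$ is Gorenstein, by Corollary \ref{cor:2-conngor} (using that $S/\qq$ is Cohen--Macaulay for every primary component — here one needs to check this, but in regularity $2$ the primary components are forced to be very simple) the graph $G(I)$ is $2$-connected, or a segment, or a point. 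Applying Lemma \ref{lem:menger}(a) with $k=2$ gives $\diam G(I)\le \lfloor (s-2)/2\rfloor+1\le \lfloor c/2\rfloor +1\le c=\height I$ as soon as $c\ge 2$; the cases $c\le 1$ are covered by Proposition \ref{prop:easy}(i),(iii).

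The main obstacle I anticipate is the verification that each primary component $\qq$ of $I$ has $S/\qq$ Cohen--Macaulay, so that Corollary \ref{cor:2-conngor} genuinely applies; Examples \ref{ex:QnotP} and \ref{ex:GorNot2conna} show this can fail for Gorenstein ideals in general, so one must use regularity $2$ crucially. The idea here is that a minimal prime $\pp_i$ of $I$ with $S/I$ of regularity $2$ must itself have $S/\pp_i$ of regularity $\le 2$ and in fact, because $S/I$ is a ``small'' Gorenstein algebra, each $\pp_i$ should be generated by linear forms and at most one quadric, forcing $S/\qq_i$ Cohen--Macaulay; alternatively one shows directly that the link $J$ of $\qq$ by the Gorenstein ideal $I$ has $S/J$ Cohen--Macaulay using Schenzel's isomorphism $H^1_\mm(S/\qq')\cong H^1_\mm(S/J)^\vee(2-r)$ combined with a regularity bound on the complementary component as in Theorem \ref{thm:GorensteinRConnected}. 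If the primary components are not all ``nice'', a fallback is to bound $\diam G(I)\le s-1\le c+1$ and then improve by one using $2$-connectedness of $G(I)$ itself (which holds by Corollary \ref{cor:2-conngor} under the stated hypothesis), landing at $\diam G(I)\le \lfloor(s-2)/2\rfloor+1\le c$.
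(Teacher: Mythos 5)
Your numerology is right and matches the paper's: after discarding linear forms in $I$, the $h$-vector is $(1,c,1)$, so $e(S/I)=c+2$ and the number of vertices satisfies $s\le c+2$. But the argument has a genuine gap exactly where you flag it: Corollary \ref{cor:2-conngor} requires $S/\qq$ to be Cohen--Macaulay for \emph{every primary component} $\qq$ of $I$, and you never establish this. The claim that ``in regularity $2$ the primary components are forced to be very simple'' is left as a hope, and your fallback is circular --- you propose to rescue the bound $\diam G(I)\le s-1\le c+1$ by invoking $2$-connectedness of $G(I)$, but that $2$-connectedness is precisely the conclusion of Corollary \ref{cor:2-conngor} whose hypothesis you could not verify. (Also, the preliminary reduction to curves via Lemma \ref{lem:redtodim2} is both unjustified here --- that lemma assumes $I$ radical and $S/\pp$ Cohen--Macaulay for all minimal primes --- and unnecessary: nothing in the rest of the argument uses $\dim(S/I)=2$.)

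The idea you are missing is a dichotomy on $s$ versus $e(S/I)$, which lets the paper avoid any analysis of primary components. If $s\le e(S/I)-1=c+1$, then plain connectedness (Hartshorne) already gives $\diam G(I)\le s-1\le c$, and no connectivity beyond $1$ is needed. The only remaining case is the extremal one $s=e(S/I)$; by the additivity formula \eqref{eq:addgen} this forces every term $\dim_\KK(S/I)_\pp\cdot e(S/\pp)$ to equal $1$, i.e.\ every minimal prime is generated by linear forms and $I$ is radical, so $I$ defines a subspace arrangement. In that case the primary components are linear primes, $S/\qq$ is trivially Cohen--Macaulay, Corollary \ref{cor:2-conngor} applies with no further work, and Lemma \ref{lem:menger}(a) with $k=2$ gives $\diam G(I)\le\lfloor c/2\rfloor+1\le c$. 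In short: $2$-connectedness is only needed in the one case where it comes for free.
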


\begin{proof}
If $I$ contains linear forms, we can quotient them out without changing the regularity, so there is no loss in assuming $I\subset \mm^2$.

Since Gorenstein implies Cohen--Macaulay, by Lemma \ref{lem:deghreg} the $h$-polynomial of $S/I$ has degree $2$. Moreover, recall that if $S/I$ is Gorenstein, then the $h$-polynomial is palyndromic. Set $c=\height(I)$; we have 
\[e(S/I)=h(1)=h_0 + h_1 + h_2=2 + h_1 = 2+ c.\]
We distinguish two cases: either the number of vertices of $G(I)$ is $s \le e(S/I) - 1$, or $s = e(S/I)$. If $s \le e(S/I) - 1$, from the connectedness of $G(I)$ we have 
\[\diam G(I) \; \le \; s-1 \; \le \; e(S/I) - 2 \; = \; \height I.\] 
So, the only case left is when $s=e(S/I)$, that is, when $I$ defines a subspace arrangement. In this case, by Corollary \ref{cor:2-conngor} and Lemma \ref{lem:menger} we obtain
\[\diam G(I) \; \le \; \left \lfloor \frac{s-2}{2} \right\rfloor + 1 \; \leq \; \left\lfloor \frac{c}{2} \right\rfloor +1 \; \le c. \qedhere\]
\end{proof}

In Proposition \ref{prop:reg2}, note that $I$ is quadratic unless it defines a hypersurface.

\subsection{An ideal with many quadratic minimal primes}
The intuition seems to suggests that, in dealing with Conjecture \ref{con:main}, the hardest case should be when $I$ defines a subspace arrangement, because this is the case where $G(I)$ has more vertices (cf. Remark \ref{rem:vgsa}). For this reason in the present paper we focused mostly on this case. However one can also find examples of quadratic complete intersections $I$ such that $\Min(I)$ consists of many quadratic prime ideals. We study the graph $G(I)$ in one such example, pointed out to us by Aldo Conca and Thomas Kahle, and prove it is anyway Hirsch.

\begin{example}
Let $X=(x_{ij})$ be an $m\times m$- symmetric matrix ($x_{ij}=x_{ji}$) of indeterminates over $\KK$, $S=\KK[X]$ the corresponding polynomial ring in $\binom{m+1}{2}$ variables and $I$ the ideal generated by the principal 2-minors of $X$, namely
\[I=(x_{ii}x_{jj}-x_{ij}^2:1\leq i<j\leq m).\]
The ideal $I$ is a complete intersection of quadrics of height $\binom{m}{2}$. Below, we are going to show that the graph $G(I)$ has $2^{\binom{m-1}{2}}$ vertices, and we will describe the corresponding minimal prime ideals of $I$.

Notice that $I$ is contained in the ideal $I_2(X)$ generated by all the 2-minors of $X$, which is a prime ideal of the same height $\binom{m}{2}$. Therefore $I_2(X)\in\Min(I)$. We can find many other minimal primes like this: If $g$ is a change of variables of $S$, we denote
\[gX=(g(x_{ij}))\]
Evidently the ideals $I_2(gX)\subset S $ have the same properties of $I_2(X)$: They are prime ideals of height $\binom{m}{2}$, $S/I_2(gX)$ is a Cohen--Macaulay ring of multiplicity $2^{m-1}$, and so on. Now, let $G$ be the set of changes of variables that
fix the variables $x_{ii}$ and change sign to some $x_{ij}$'s with $i<j$. For any $g \in G$, we have $I\subset I_2(g(X))$. Hence
\[\{I_2(gX):g\in G\}\subset \Min(I).\] 
We want to show that equality holds. Since the multiplicity of $S/I$ is $2^{\binom{m}{2}}$, by the additivity of the multiplicity it is enough to show that
\[|\{I_2(gX):g\in G\}|=2^{\binom{m-1}{2}}.\]
Certainly $|\{I_2(gX):g\in G\}|\leq 2^{\binom{m-1}{2}}$, since $|\{I_2(gX):g\in G\}|e(S/I_2(X))\leq e(S/I)$ by \eqref{eq:addrad}. So we must produce $2^{\binom{m-1}{2}}$ elements $g\in G$ such that the ideals $I_2(gX)$ are pairwise different (notice that $|G|=\binom{m}{2}$). To this end, for any subset $A\subset \{(i,j):1\leq i<j\leq m\}$ let us denote by $g_A$ the change of variables given by
\begin{align*}
g_A(x_{ij}) =
\begin{cases}
x_{ij} & \text {if } (i,j)\notin A,\\
-x_{ij} & \text {if } (i,j)\in A.
\end{cases}
\end{align*}
Now let us fix $U=\{(i,j):1\leq i<j-1\leq m-1\}$. The set $U$ has cardinality $\binom{m-1}{2}$ and, if $A$ and $B$ are different subsets of $U$, one has $I_2(g_AX)\neq I_2(g_BX)$. To see this, we can assume that there is a $j$ such that for some $i$, $(i,j)\in A\setminus B$. Pick the maximum index $i$ doing the job, and notice that $i\leq m-2$ (since $A$ is in $U$). By denoting $\lbrack a,b \mid c,d\rbrack_{gX}$ the 2-minor of $gX$ corresponding to the rows $a,b$ and the columns $c,d$, we have:
\begin{eqnarray*}
\lbrack i, i+1\mid i+1,j\rbrack_{g_AX}=\delta x_{i,i+1}x_{i+1,j}+x_{i+1,i+1}x_{i,j} \\
\lbrack i, i+1\mid i+1,j\rbrack_{g_BX}=\delta x_{i,i+1}x_{i+1,j}-x_{i+1,i+1}x_{i,j},
\end{eqnarray*}
where $\delta$ is $-1$ or $+1$ according to whether $(i+1,j)$ does or does not belong to $A$. Therefore 
\[x_{i+1,i+1}x_{i,j}\in I_2(g_AX)+I_2(g_BX),\]
which means that $I_2(g_AX)\neq I_2(g_BX)$. (Since it is a prime ideal, $I_2(g_AX)$ does not contain $x_{i+1,i+1}x_{i,j}$.)

Our next goal is to show that $\diam G(I)\leq \binom{m-1}{2}$. 
To prove this, take two subsets $A,B\subset \{(i,j):1\leq i<j\leq m\}$ such that $A\subset B$ and $B\setminus A=\{(i_0,j_0)\}$. We claim that 
\[\height(I_2(g_AX)+I_2(g_BX))=\height I+1=\binom{m}{2}+1.\]
In fact, it is easy to see that
\[I_2(g_AX)+I_2(g_BX)=I_2(g_AX)+ (x_{i_0,j_0} x_{ij}:\mbox{ both $i\neq i_0$ and $j\neq j_0$}).\]
Consider the ideal $I_2(g_AX)+I_2(g_BX)$ modulo $I_2(g_AX)$, so that we get the ideal
\[J= \overline{(x_{i_0,j_0} x_{ij}:\mbox{ both $i\neq i_0$ and $j\neq j_0$})}\subset R=S/I_2(g_AX).\]
By Krull's Hauptidealsatz, any minimal prime ideal $\pp$ of $\overline{(x_{i_0,j_0})}$ has height at most 1, and since $\pp\supseteq J$, it follows that $\height J\leq 1$. Because $R$ is a domain and $J$ is not the zero ideal, $\height J=1$. Thus the claim is proven.

Now, take two minimal prime ideals $\pp$ and $\qq$ of $I$. By what said before and the symmetry of the situation, we can assume that $\pp=I_2(X)$ and $\qq =I_2(g_AX)$ for a subset $A$ of $U=\{(i,j):1\leq i<j-1\leq m-1\}$. Pick a saturated chain $A_1\subset A_2 \subset ... \subset A_k=A$ such that $|A_i|=i$. Then, by what we proved above,
\[\height(I_2(X)+I_2(g_{A_1}X)) = \height(I_2(g_{A_{i-1}}X)+I_2(g_{A_i}X))=1 \ \ \ \forall \ i=2,\ldots ,k,\]
so $\diam G(I)\leq k \leq \binom{m-1}{2}$. In particular, $I$ is Hirsch.
\end{example}

\subsection{Cautionary examples and non-Hirsch ideals}

Let us finish with some examples. The first one is a caveat concerning the ``distance'' between two minimal primes. In the monomial case, if three minimal primes $\pp_1, \pp_2, \pp_3$ of a monomial ideal $I$ form a $2$-edge path in $G(I)$, then $\height(\pp_1 +\pp_3)$ is at most $2+ \height \pp_1$. Hence one is tempted to think that $\height(\pp_i+\pp_1)$ should somehow measure the graph-theoretical distance of $\pp_i$ from $\pp_1$. This is false for non-monomial ideals, as the following example (for $n\ge 4$) outlines.

\begin{example}
Let $S$ be the ring $\KK[x_1, \ldots, x_n, y_1, \ldots, y_n]$. 
Let $\pp_x$ (resp.\ $\pp_y$) be the prime ideal generated by $x_1, \ldots, x_{n-1}$ 
(resp.\ by $y_1, \ldots, y_{n-1}$). Clearly, 
\[\height \pp_x = n - 1 = \height \pp_y.\]
Next, consider the $2 \times n$ matrix with row vectors $(x_1, \ldots, x_n)$ and $(y_1, \ldots, y_n)$. Let $\pp$ be the prime ideal generated by the size-$2$ minors of such a matrix, and let 
\[I=\pp \cap \pp_x \cap \pp_y.\]
It is well known that $\height \pp=n-1$. Moreover, $\pp+\pp_x$ is contained in $(x_1,...,x_n)$, so it has height $n$. It follows that in $G(I)$ the primes $\pp$ and $\pp_x$ are connected by an edge. Symmetrically, there is an edge between $\pp$ and $\pp_y$. However, 
\[\height(\pp_x + \pp_y) \, = \, \height(x_1, \ldots, x_{n-1}, y_1, \ldots, y_{n-1}) \; = \; 2n-2.\] 
In conclusion, there is no upper bound for $\height(\pp_x + \pp_y)$,  even if $\pp_x$ and $\pp_y$ are two primes at distance $2$ in $G(I)$.
\end{example} 

\bigskip
Next, we highlight a construction (dual to taking products of polytopes) to obtain triangulated spheres whose Stanley--Reisner ring is ``far from being Hirsch''. Recall that if $P$ is any (convex) $(d+1)$-dimensional simplicial polytope with $n$ vertices, 
its polar dual $Q$ is a $(d+1)$-dimensional simple polytope with $n$ facets: The graph of $Q$ coincides with the dual graph of $\partial P$.
Moreover, the $k$-fold product $Q^k=Q \times \ldots \times Q$  is a $k(d+1)$-dimensional simple polytope with $kn$ facets. If the graph of $Q$ has diameter $\delta$, it is not difficult to show that the graph of $Q^k$ has diameter $k\delta$.

\begin{example}[Matschke--Santos--Weibel] Matschke, Santos and Weibel \cite{MSW} recently constructed a simplicial polytope $P$ with the following properties:
\begin{compactenum}[(i)]
\item The boundary $\Delta= \partial P$ of $P$ is a $19$-dimensional sphere with $40$ vertices;
\item the dual graph of $\Delta$ has diameter $21$.
\end{compactenum}
It follows that the ideal $I_\Delta \subset \KK[x_1, \ldots, x_{40}]$ has height $20$ and diameter $21$, so it is \emph{not} Hirsch. This is the smallest non-Hirsch sphere currently known. (The ideal $I_\Delta$ is monomial and radical, but it is not generated in degree two. Moreover, $S/I_\Delta$ is Gorenstein.) 

Let us apply the dual product construction sketched before to the $20$-dimensional polytope $P$ above. If $Q$ is the polar of $P$, let $\Delta_k$ denote the boundary of the polar dual of $Q^k$. By construction, $\Delta_k$ is a simplicial sphere with $40k$ vertices and 
dimension $20k-1$. Moreover, the dual graph of $\Delta_k$ is just the graph of $Q^k$, which has diameter $21k$. If $I_k \subset \KK[x_1, \ldots, x_{40k}]$ denotes the Stanley--Reisner ideal of $\Delta_k$, we have
\[\diam G(I_k)=21k \qquad \textrm{ and } \qquad \height(I_k) = 40k - (20k - 1) - 1= 20k.\] 
\end{example}

Very recently, Santos produced $d$-dimensional simplicial complexes $\Delta$  with $\diam G(I_{\Delta}) \in n^{\Theta(d)}$ \cite[Corollary 2.12]{Santos13}.
For Cohen--Macaulay $d$-complexes, however, the diameter of the dual graph is bounded above by $2^{d-2} n$, which for fixed $d$ is linear in $n$:


\begin{theorem}[{Larman \cite{Larman}, see also  
\cite[Theorems 3.12 and 3.14]{Santos13}}] 
Let $I \subset S=\KK[x_1,\ldots ,x_n]$ be a (squarefree) monomial ideal of height $c$.  
If $S/I$ is Cohen--Macaulay, \[\diam G(I) \le 2^{n-c-3} \, n. \] 
\end{theorem}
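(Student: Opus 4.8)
The plan is to pass to the Stanley--Reisner complex of $I$ and reduce the statement to a purely combinatorial one, which is Larman's diameter bound extended to Cohen--Macaulay complexes. Since $I$ is squarefree and monomial, $I = I_\Delta$ for a simplicial complex $\Delta$ on the vertex set $\{1,\dots,n\}$, and by Lemma~\ref{lem:combcomm} we have $G(I) = G(\Delta)$. As $S/I$ is Cohen--Macaulay, $\Delta$ is a Cohen--Macaulay complex, hence pure, say of dimension $d$, and then $c = \height I = n-d-1$, so $d = n-c-1$ and the asserted bound reads $\diam G(\Delta)\le 2^{d-2}n$. (We assume $d\ge 2$, equivalently $n-c-3\ge 0$, which is the range where the stated bound is meaningful.) Recall that Cohen--Macaulay complexes are normal, and --- what makes the induction below go through --- that the link $\operatorname{lk}(v,\Delta)$ of any vertex $v$ is again Cohen--Macaulay, pure of dimension $d-1$, supported on at most $n-1$ vertices, and that $G(\St(v,\Delta)) = G(\operatorname{lk}(v,\Delta))$ is the subgraph of $G(\Delta)$ induced on the facets through $v$.

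It therefore suffices to show: if $\Delta$ is a Cohen--Macaulay complex of dimension $d$ on $n$ vertices, then $\diam G(\Delta)\le 2^{d-2}n$. I would prove this by induction on $d$. The base case $d = 2$, where one shows $\diam G(\Delta)\le n$, is classical. For the inductive step, fix two facets $F,F'$ and pick vertices $u\in F$, $w\in F'$; by the inductive hypothesis applied to the $(d-1)$-dimensional links $\operatorname{lk}(u,\Delta)$ and $\operatorname{lk}(w,\Delta)$, each on at most $n-1$ vertices, one can travel inside $\St(u,\Delta)$ (resp.\ $\St(w,\Delta)$), hence inside $G(\Delta)$, between any two of their facets in at most $(n-1)2^{d-3}$ steps. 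A dual path from $F$ to $F'$ is then assembled from a carefully chosen sequence of vertex stars --- this is Larman's ``fence'' construction --- yielding a recursion of the shape $H(d,n)\le 2H(d-1,n-1)+2^{d-2}$, where $H(d,n)$ denotes the maximal dual-graph diameter over Cohen--Macaulay $d$-complexes on $n$ vertices; this telescopes to $H(d,n)\le 2^{d-2}n$.

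The crux, and the step I expect to be the main obstacle, is exactly the fence construction in the inductive step. A naive covering of a shortest dual path by vertex stars allows the number of stars to grow linearly in $n$ and only produces a bound quadratic in $n$; the whole point of Larman's argument is the bookkeeping --- restricting which stars may be re-entered --- that keeps the effective budget linear and forces the constant to be $2^{d-2}$. In a self-contained treatment I would reproduce this from \cite{Larman} (see also \cite[Theorems~3.12 and~3.14]{Santos13}); since here the theorem is cited for context, it suffices to observe that the reduction above places $\Delta$ exactly in the hypotheses of Larman's theorem.
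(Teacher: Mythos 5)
The paper states this theorem purely as a citation (to Larman, and to Santos's extension from polytopes to normal simplicial complexes), so there is no internal proof to compare against; your reduction --- $I=I_\Delta$ with $G(I)=G(\Delta)$ via Lemma~\ref{lem:combcomm}, $c=n-d-1$ so that $2^{n-c-3}n=2^{d-2}n$, and Cohen--Macaulay $\Rightarrow$ normal so that $\Delta$ falls under \cite[Theorems 3.12 and 3.14]{Santos13} --- is exactly the translation the citation presupposes, and it is correct (including your remark that the bound is only meaningful for $n-c-3\ge 0$). Your sketch of the inductive fence argument is plausible and arithmetically consistent, but as you acknowledge, that construction is the entire content of the cited result, and deferring it to \cite{Larman} and \cite{Santos13} matches how the paper itself treats the statement.
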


Our final example shows that even with the Cohen--Macaulay assumption, this type of upper bound (independent on the degree of generators) cannot  exist outside the world of monomial ideals. In fact, even if we prescribe $I$ to be a complete intersection, and even if we fix the parameters $\height(I)=2$ and $n=4$, the diameter of $G(I)$ can be arbitrarily high.  
\begin{example} \label{ex:unbounded}
If $\KK$ is algebraically closed, for any $N\in\N$, there are two polynomials $f,g \in S=\KK[x_1,\ldots ,x_4]$ such that $I=(f,g)$ is a complete intersection and $\diam G(I)=N$. To prove this, pick $N+2$ linear forms $\ell_1,\ldots ,\ell_{N+2}\in S$ such that any 4 of them are linearly independent, and set:
\[J = (\ell_1, \ell_2) \cap (\ell_2, \ell_3) \cap \ldots \cap (\ell_{N+1}, \ell_{N+2}).\] 
By construction, $J$ defines a connected union of lines in $\PP^3$ and $G(J)$ is a path on $N+1$ vertices. By a result of Mohan Kumar \cite[Theorem 2.15]{Ly}, $J$ is a set-theoretic complete intersection. In other words, there exist $2$ polynomials $f$ and $g$ such that the ideals $I=(f,g)$ and $J$ have the same radical, namely, $J$. In particular, $G(I) = G(J)$  and $\diam G(I) =N$. Note that 

\[
\begin{array}{ll}
\deg f + \deg g \, &= \, \reg (S/I), \quad \textrm{ and} \\ 
\deg f \,\cdot\, \deg g \, &= \, e(S/I) \, \ge \, e(S/J) \, = \, N.
\end{array}
\]

It follows that $\reg(S/I) \ \ge \ 2\sqrt{N}$. So if $N$ is very large, the regularity of $S/I$ is also large. In contrast, the graph $G(I) = G(J)$ is not even $2$-connected. There is however no contradiction with Main Theorem \ref{mthm:1}. In fact, $S/I$ is Gorenstein, but $I$ does not define a subspace arrangement; whereas $\sqrt{I}$ defines a subspace arrangement, but $S/\sqrt{I}$ is not Gorenstein. 
\end{example}

By Proposition \ref{prop:easy}, the phenomenon of Example \ref{ex:unbounded} cannot appear in a polynomial ring $S$ with less than 4 variables. 

\bigskip

{\bf Acknowledgments}. We thank David Eisenbud for suggesting us to look at his paper with Bayer \cite{BE}, which inspired the results of Section \ref{sec:arrangement}. Thanks to Christos Athanasiadis, for pointing us the reference \cite{Klee}. Finally we would like to thank Aldo Conca for useful discussions and for proofreading part of the paper.

\bigskip


\end{document}